\renewcommand{\structure}{}
\renewcommand{\corange}{}
\renewcommand{\clrgreen}{}
\newenvironment{nouppercase}{%
  \renewcommand{\uppercasenonmath}[1]{}}{}
\newcommand{\secref}[1]{Section~\ref{#1}}
\newcommand*{\affaddr}[1]{#1} 
\newcommand*{\affmark}[1][*]{\textsuperscript{#1}}
\renewcommand{\cred}{}
\begin{document}

\title[Multifractal Analysis Modeling TCP CUBIC]
{Large Deviation Multifractal Analysis of a Process Modeling TCP CUBIC}

\author{K\'aroly~Simon\affmark[1], S\'andor~Moln\'ar\affmark[2], J\'ulia Komj\'athy\affmark[3], and P\'eter~M\'ora\affmark[1]\\
\affaddr{\affmark[1]Department of Stochastics, Institute of Mathematics\\ Budapest University of Technology and Economics, Hungary}\\
\affaddr{\affmark[2]Department of Telecommunications and Media Informatics\\ Budapest University of Technology and Economics, Hungary}\\
\affaddr{\affmark[3]Stochastic Section, Eindhoven University of Technology, The Netherlands}\\
}

\begin{abstract}
Multifractal characteristics of the Internet traffic have been discovered and discussed in several research papers so far. However, the origin of this phenomenon is still not fully understood. It has been proven that the congestion control mechanism of the Internet transport protocol, i.e., the mechanism of TCP Reno can generate multifractal traffic properties. Nonetheless, TCP Reno does not exist in today's network any longer, surprisingly traffic multifractality has still been observed. In this paper we give the theoretical proof that TCP CUBIC, which is the default TCP version in the Linux world, can generate multifractal traffic.  We give the multifractal spectrum of TCP CUBIC traffic and compare it with the multifractal spectrum of TCP Reno traffic. Moreover, we present the multifractal spectrum for a more general model, where TCP CUBIC and TCP Reno are special cases. Our results also show that TCP CUBIC produces less bursty traffic than TCP Reno.
\end{abstract}

\begin{nouppercase}
\maketitle
\end{nouppercase}



\section{Introduction}
\label{sec:intro}

The extensive measurements and analyses of network traffic in the previous three decades have revealed rich and complex traffic properties highlighting \textit{scale invariant features} and \textit{fractal characteristics}. These properties have helped to understand the most striking feature of network traffic: its \textit{burstiness}. Burstiness refers to the inherent nature of network traffic meaning that packets are transmitted in short uneven spurts. A kind of burstiness manifests itself over long periods identified as \textit{self-similarity} and \textit{long-range dependence (LRD)}, which have been studied intensively since their first discovery in the research of Leland et al.~\cite{leland1993self}. LRD can be captured well by \textit{monofractal} models like Fractional Brownian Motion (FBM)~\cite{riedi2000toward}. \textit{Temporal burstiness}, which is the variation of traffic intensity on small time-scales, has also been explored. However, comprehensive research has shown that simple monofractal scaling cannot describe traffic burstiness at this scale and more sophisticated \textit{multifractal} models are needed~\cite{feldmann1998data,riedi2000toward}.

\textit{Multifractal analysis} has been found to be a useful tool to explore temporal burstiness and multifractal characteristics of network traffic at small time scales~\cite{riedi2000toward}. Several research studies have revealed that the multifractality nature of traffic is mainly due to the TCP (Transmission Control Protocol)~\cite{Reidi1997, riedi2000toward}, which carries more than 80\% of network traffic~\cite{TCP_Volume}. Moreover, it has also been shown that there are significant performance implications of the multifractality of network traffic regarding queueing performance~\cite{Dand2003,erramilli2000performance}. As a consequence, it is vital to explore the behavior of TCP traffic in order to characterize its multifractal features and this is exactly the motivation for our work.

Our methodology in this paper has been inspired by the work of L\'{e}vy-V\'{e}hel and Rams~\cite{Rams2012}, where a Large Deviation Multifractal Analysis has been carried out. Their work focuses on the analysis of a simplified TCP model of TCP Reno and presents its multifractal spectra. This important result has brought forth theoretical proof that TCP Reno dynamics itself (i.e., the additive increase multiplicative decrease (AIMD) mechanism) can lead to multifractal behavior.

In this paper we make a step further in discovering the \textit{multifractal nature of network traffic}. We have been motivated by the suprising fact that TCP Reno does not exist in today's network any longer traffic multifractality  has still been observed. Compared to the TCP Reno model in~\cite{Rams2012} we choose a realistic model for today's network, i.e., the TCP CUBIC, which is the default TCP version in the Linux world and analyze its\textit{ multifractal spectrum}. Moreover, we also present the multifractal spectrum for a more \textit{general case} where the functions used in TCP CUBIC and TCP Reno are special cases. We also compare our results to the results in~\cite{Rams2012} and provide the \textit{theoretical proof of showing that TCP CUBIC generates less bursty traffic than TCP Reno}.

\cred{Our proves follow the line   of the proofs of L\'{e}vy-V\'{e}hel  and Rams paper \cite{Rams2012}. However, in this much more general setup, the details became much more cumbersome and we needed to face with technical difficulties that did not appear in the case of TCP Reno.}

The rest of the paper is organized as follows. \secref{sec:relwork} discusses the related work in the field of TCP multifractality. In~\secref{sec:cubicmodel}, we present the TCP CUBIC model with its multifractal spectrum. The comparison of the multifractal spectrum of TCP CUBIC and TCP Reno is discussed in ~\secref{sec:comparison}. In~\secref{sec:generalization} we present the more general model and its multifractal spectrum, with TCP CUBIC being a special case. The detailed proof of our results is given in ~\secref{sec:stationarity}-\secref{z104}. Finally, \secref{sec:implications} concludes the paper.

\section{Related Work}
\label{sec:relwork}

Traffic burstiness has been investigated for a long time in the teletraffic research~\cite{frost1994traffic,jagerman1997stochastic,abry2002multiscale} and found to be one of the key characteristics of network traffic from network design, dimensioning and performance evaluation point of view. Several advanced burstiness measures have been proposed because simple measures like Peak-To-Mean ratio (PMR) and Coefficient of Variation were found to be inadequate~\cite{frost1994traffic,jagerman1997stochastic,abry2002multiscale}. These measures includes, for example, Hurst parameter, Index of dispersion for intervals (IDI), Index of dispersion for counts (IDC), the peakedness functional, etc. see~\cite{frost1994traffic,jagerman1997stochastic,abry2002multiscale}. In addition, recently it has been found that burstiness is even more complex and requires a full high-order and correlation characterization and a more appropriate burstiness characterization was proposed via the multifractal analysis of the traffic~\cite{abry2002multiscale,riedi2000toward}. In our paper we chose the multifractal analysis of TCP CUBIC traffic and the first time gave a mathematical proof based on the multifractal spectrum why TCP CUBIC burstiness is smaller than TCP Reno. The only related paper in this regard is~\cite{cai2007stochastic} but in this paper both the goal and the methodology are different, i.e., focusing on the rate variation metric by convex ordering and measured by the Coefficient of Variation capturing only the second order properties of traffic variability. In contrast, we focus on a much richer characterization of burstiness by the Large Deviation Multifractal Analysis and measured by the full multifractal spectrum.

Multifractal characteristics of network traffic was first published by Riedi and L\'{e}vy-V\'{e}hel~\cite{Reidi1997,Reidi1997b}. From this discovery several research studies have been carried out to understand the multiscale nature of network traffic, see~\cite{abry2002multiscale} for an excellent overview. From traffic modeling purpose different model classes have been developed, e.g., \textit{multiplicative cascades}~\cite{feldmann1998data,riedi2002}, \textit{Fractional Brownian Motion in multifractal time}~\cite{Reidi1997b}, \textit{$\alpha$-stable processes}~\cite{samoradnitsky1994stable} and other general multifractal models~\cite{molnar2001general}.


Regarding the reason \textit{why} multifractality is present and observed in the Internet we still have a lack of clear understanding. In order to find explanations for the traffic multifractality Feldmann et al.~\cite{feldmann1998data} presented a cascade framework that allowed for a plausible physical explanation of the observed multifractal scaling
properties of network traffic. They applied wavelet-based analysis and obtained a detailed description of multifractality. Their main findings is that the cascade paradigm over small time scales appears to be a traffic invariant for wide area network traffic.

However, there is no physical evidence that TCP traffic actually behaves
as a cascading or multiplicative process. L\'{e}vy-V\'{e}hel and Rams~\cite{Rams2012} showed that adding sources managed by TCP can lead to multifractal behavior. This result demonstrates that there is no need for assuming any multiplicative structure but multifractality is simply due to the interactions of additive increase multiplicative decrease (AIMD) mechanism of TCP and to the random non-synchronous transmission of the sources.
The result was proved for a simplified TCP model capturing the main features of TCP Reno. 

Nevertheless, TCP Reno is not used in recent networks and due to the inefficient performance of old TCPs (e.g., TCP Reno or TCP Vegas) a fast development of several new TCP versions have been triggered (e.g., FAST~\cite{FAST-2}, HSTCP~\cite{RFC3649}, STCP~\cite{Kelly03}, BIC~\cite{BIC}, etc.). Among these versions the TCP CUBIC~\cite{CUBIC} is widespread since it is implemented and used by default in Linux kernels 2.6.19 and above. This was the main motivation for us focusing on TCP CUBIC in this paper.



\section{Multifractal Analysis of TCP CUBIC}
\label{sec:cubicmodel}
In this Section we introduce the model of TCP CUBIC with some basic definitions and properties used in the analysis. Furthermore, we present our results for the multifractal spectrum of TCP CUBIC.

\subsection{The Model of TCP CUBIC}\label{z14}
TCP CUBIC is a successful transport protocol in the evolution of TCP versions where the congestion control method is optimized to high bandwidth networks~\cite{CUBIC}. TCP CUBIC is similar to the standard TCP Reno algorithm regarding the additive increase and multiplicative decrease behavior but there are also major differences. For instance, TCP CUBIC increases it's sending rate according to a \textit{cubic function}~\cite{CUBIC} instead of linear increase, that was implemented in TCP Reno. In the following model we have captured the main characteristics of TCP CUBIC.

The aggregated TCP CUBIC traffic is modelled by the infinite sum of independent random functions
\begin{equation}\label{t199}
  Z(t):=\sum\limits_{j=1}^{\infty }Z_j(t),
\end{equation}
where $Z_j(t)$ are piecewise deterministic functions on random time intervals representing the TCP CUBIC traffic from source $j$. The main idea is the following: events of losses of packages occur at a sequence of random points in time, that we denote by a random sequence $T_1^j, T_2^j, \dots$ for each $j\in \mathbb N$.
 Then, $Z_j(t)$ is deterministic and monotone increasing on each random time interval $\left[T_{k-1}^j,T_{k}^{j}\right)$ for $k=1,2, \dots $. This corresponds to the fact that the TCP protocol increases its sending rate when no loss occurs.
 First we define the random time of losses $\left(T_{k}^{j}\right)_{k=1,2, \dots}$,  then we describe the deterministic rule which gives $Z_j(t)$ on $\left[T_{k-1}^j,T_{k}^{j}\right)$.

Each $Z_j(t)$ has an intensity parameter $\lambda_j$. \cred{Without loss of generality we may} assume that
$
  1  \leq  \lambda_1<\lambda_2<\cdots,
$
 and we also assume that the sequence  $\left(\lambda_j\right)_{j=1}^{\infty }$ is regular (see Definition \ref{t220}).
Moreover, we require that
   \begin{equation}\label{o59}
\sum\limits_{j=1}^{\infty }\left(\frac{\log\lambda_j}{\lambda _{j}}\right)^{3}<\infty .
\end{equation}
Then the sequence of losses,  $\left(T_{k}^{j}\right)_{k=0}^{\infty } $ are independent $\mathrm{Poisson}(\lambda_j)$ processes for different $j$'s. Hence,
\begin{equation}\label{t197}
  T_{k}^{(j)}:=\sum\limits_{j=0}^{k}\tau_{k}^{(j)},
\end{equation}
 where the inter-event times
$\left(\tau_{k}^{(j)}\right)_{k=1}^{\infty }
$
  are $\mathrm{Exp(\lambda_j)}$ random variables
such that $\left(\tau_{k}^{(j)}\right)_{j,k}$ are independent.

We will define the functions $Z_j(t)$ in a right-continuous way. First we define
 $Z_j(0)$ in an arbitrary way such that $\sum\limits_{j=1}^{\infty }Z_j(0)<\infty$. Then we assume that $Z_j(t)$ is already defined for all $t<T_{k-1}^{(j)}$.
 So, in particular $Z_j(T^{(j)-}_{k-1})$, the left hand side limit of $Z_j$ at
$T^{(j)}_{k-1}$ has already been defined.  (See Figure \ref{z2}.) Then
 we define $Z_j(t)$ for a $t\in\left[T_{k-1}^{(j)},T_{k}^{(j)}\right)$
by
\begin{equation}\label{o58}
  \large{Z_j(t):=g_{Z_j(T^{(j)-}_{k-1})}\left(t-T^{(j)}_{k-1},\right)},
\end{equation}
where
\begin{equation}\label{o84}
g_w(t):=C\left(t-\sqrt[3]{\frac{wb}{C}}\right)^3+w,
\end{equation}
 where we set $b=0.7$ and $C=0.4$ in the TCP CUBIC model since this is the setting in most of the Linux kernels, see Figure~\ref{t195}.
\begin{figure}[H]
  \centering
 \includegraphics[width=6cm]{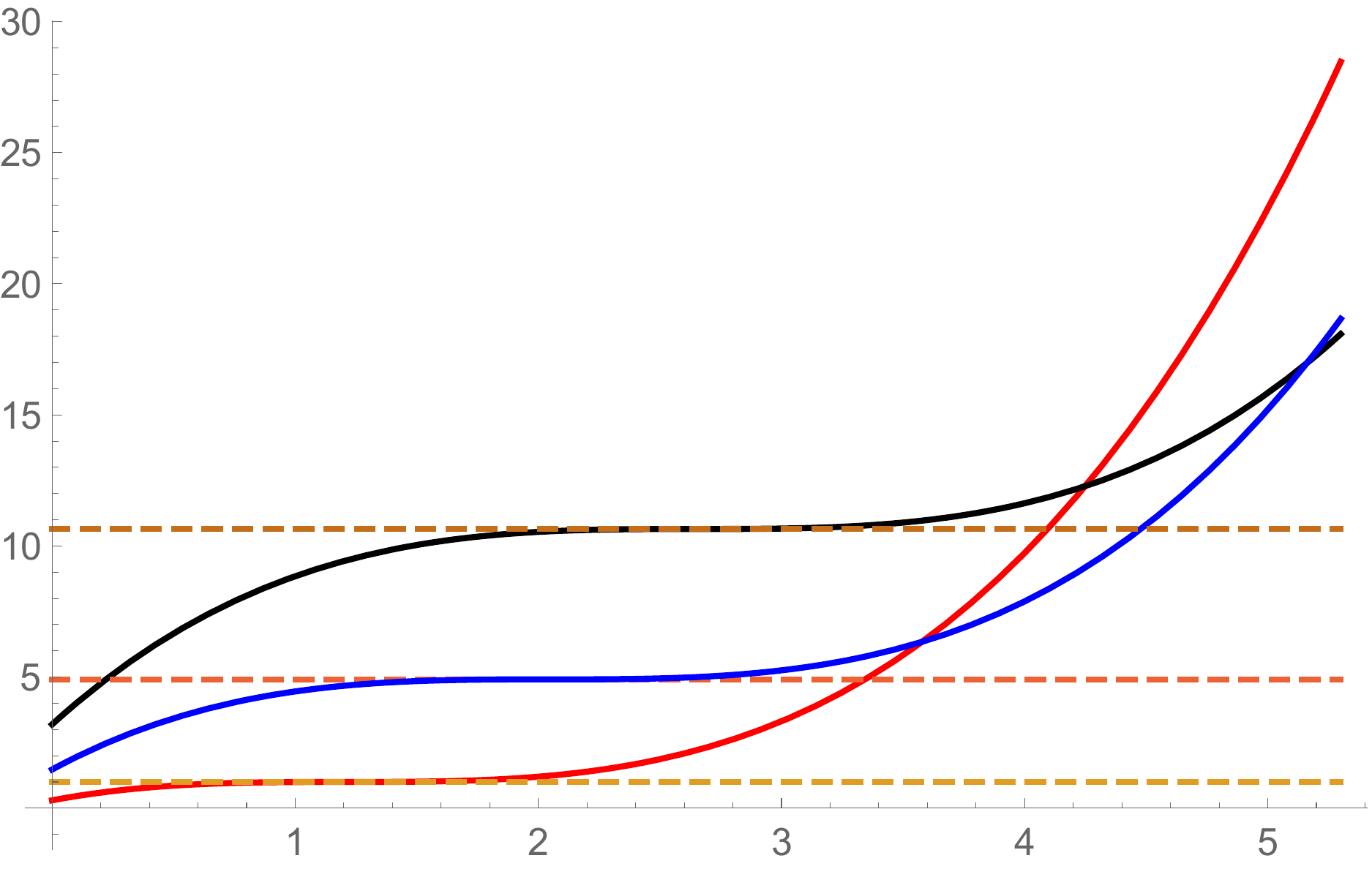}
\\
  \caption{We compare $g_1(t)$, $g_{1.7}(t)$ and $g_{2.3}(t)$ in the TCP CUBIC model.}\label{t195}
\end{figure}
One can see that the family $\left\{g_w(\cdot)\right\}_{w>0}$ of functions have self-affine property:
\begin{equation}\label{o85}
 \large{g_w(t)=\lambda ^3g_{\frac{w}{\lambda ^{3}}}\left(\frac{t}{\lambda }\right)}.
\end{equation}
Set $\lambda_0:=1$ and we define the reference process $Z_0$ by \eqref{o58}, see Figure~\ref{z2}.
\begin{figure}[H]
  \centering
  \includegraphics[width=7cm]{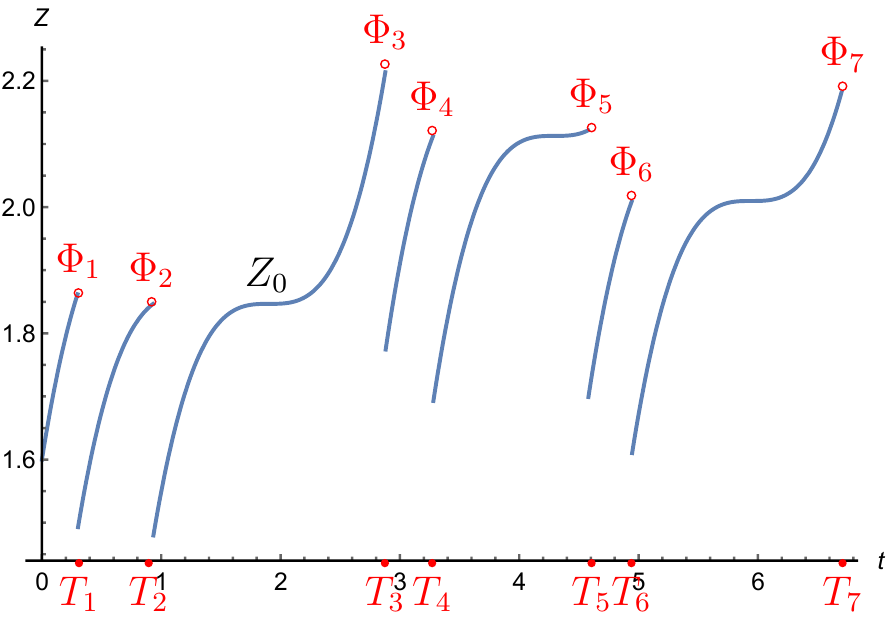}\\
  \caption{A realization of  $Z_0$. The value of
  $Z_0(t)$ immediately before the $k$-th loss is $\Phi_k:=Z_0(T^{(0)-}_k)$.
  }\label{z2}
\end{figure}
We define $\Phi_k:=Z_0(T^{(0)-}_k)$, the value of $Z_0(t)$ right before the $k$th loss happens. It is an elementary calculation to see that the size of the $k$th loss is then $b \Phi_k$, that is, $Z_0(T^{(0)}_k)=(1-b)\Phi_k$. Due to the fact that the function $g_w$ is nonlinear, $Z_0(t)$ is not a Markov process. However, $\{\Phi_k\}$ is a Markov chain.

\subsection{Large Deviation Multifractal Spectrum $f_g(\alpha)$}\label{z28}
We define the Large Deviation Multifractal Spectrum, the main object of our analysis. Recall the function $Z(t)$ from \eqref{t199}.
\clrgreen{\begin{definition}[Large deviation multifractal sepectrum]\label{t205}
First in  we define the Large deviation multifractal spectrum for increments then  for the osscilations.
We write $I_{\ell }^{k}:=\left[\frac{k}{2^\ell} ,\frac{k+1}{2^\ell }\right]$ for the $k$-th level-$\ell $ binary intervals, $k=0, \dots  ,2^{\ell}-1$.
\begin{enumerate}
  \item  The $k$th level-$\ell$ increment of the random function
$Z$ is
\begin{equation}\label{z3}
  \Delta_{\ell }^{k}Z:=Z\left(\frac{k+1}{2^\ell }\right)-
  Z\left(\frac{k}{2^\ell }\right)
\end{equation}
We define $\Delta_{\ell }^{k}Z_j, j\ge 1$ in an analogous way.
We denote the number of level-$\ell $ binary intervals on which
 the increment of $Z$ is approximately $\left(1/2^{\ell }\right)^\alpha$ by
 $N_{\ell }^{\varepsilon }(\alpha)$:
 \begin{equation}\label{o121}
N_{\ell }^{\varepsilon }(\alpha):=
\# J_{\alpha,\ell,\varepsilon },
\end{equation}
where
\begin{equation}\label{z27}
 J_{\alpha,\ell,\varepsilon }:=
\left\{
0 \leq k<2^\ell :
|\Delta_{\ell }^{k }Z|\in\left(
\frac{1}{2^{\ell (\alpha+\varepsilon)}}
,
\frac{1}{2^{\ell (\alpha-\varepsilon)}}
\right)
\right\}
\end{equation}
In general $N_{\ell }^{\varepsilon }(\alpha)\ll 2^{\ell }$.
The large deviation multifractal spectrum $f_g(\alpha)$ is the smallest exponent for which for  (in a very vague sense )
$$
N_{\ell }^{\varepsilon }(\alpha) \precsim \left(2^\ell \right)^{f_g(\alpha)}.
$$
More precisely,
\begin{equation}\label{t212}
f_g(\alpha ):=\lim\limits_{\varepsilon \to 0}
\limsup\limits_{\ell \to \infty }
\frac{\log N_{\ell }^{\varepsilon }(\alpha)}{\log 2^\ell }.
\end{equation}
  \item The large deviation multifractal spectrum for osscilation $f_g^{\mathrm O}(\alpha)$, is determined by the \emph{oscillations of} $Z$.
That is, in Definition \ref{t205}, we change $\Delta^k_\ell Z$ in \eqref{z3} to
\begin{equation*}\label{osc-def}
\mathrm O^k_\ell Z:= \sup_{x \in I^k_\ell} Z(x) - \inf_{x \in I^k_\ell} Z(x),
\end{equation*}
and then define $J_{\alpha, \ell, \varepsilon}^{\mathrm{O}}$ and consecutively $N_\ell^{\varepsilon, \mathrm{O}}(\alpha)$ similarly as in \eqref{z27} and \eqref{o121} by replacing $\Delta^k_\ell Z$ by $\mathrm{O}_\ell^k Z$.
Finally, we obtain $f_g^{\mathrm{O}}(\alpha)$ as in \eqref{t212} but with $N_\ell^{\varepsilon, \mathrm{O}}(\alpha)$ instead of $N_\ell^{\varepsilon}(\alpha)$.
\end{enumerate}
\end{definition}}

\begin{figure}
  \centering
  \includegraphics[height=5cm]{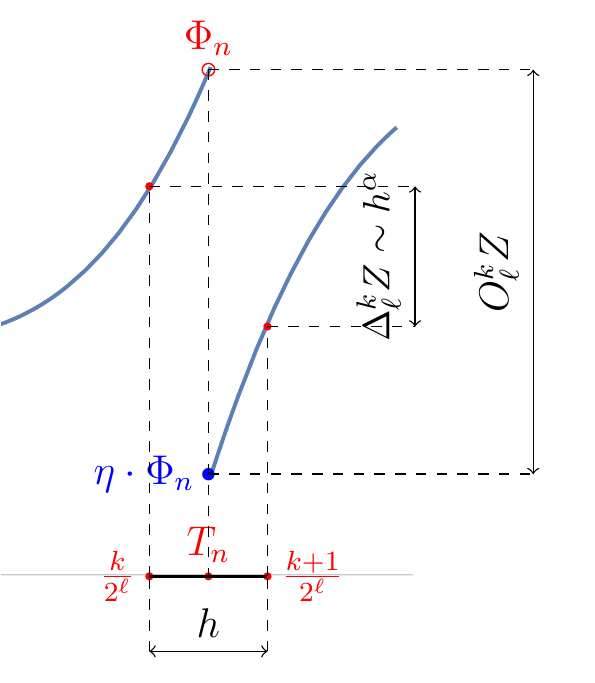}
  \caption{The increment and the oscillation, where $0<\eta<1$ is a constant  (defined in the supplement).}\label{z102}
\end{figure}

\subsection{Blumenthal-Getoor Index and the Regularity of $\left(\lambda_j\right)_{j=1}^{\infty }$}\label{z102}
The notation here are used later in the more general setting. Let $\theta \geq 1$ be an arbitrary real number.  (In the case of TCP CUBIC $\theta=3$.)
Blumenthal-Getoor index of an infinite sequence $(\lambda_j)_{j\ge1}$ and exponent $\theta$ is defined as
\begin{equation}\label{z4}
  \beta_\theta :=1+\inf
\left\{
\gamma \geq 0:
\sum\limits_{j}\frac{1}{\lambda _{j}^{\theta \cdot \gamma }}
\right\}
<\infty .
\end{equation}
Clearly $\beta_\theta$ depends on $\left(\lambda_j\right)_{j\ge 1}$ and $\theta$.
It follows from \eqref{o59} that
\begin{equation*}
  \beta_\theta\in\left(1,2\right]
\end{equation*}
To obtain an equivalent definition first we fix a natural number $L>2$. The powers of $L$ are denoted sometimes by $L_k:=L^k$. We write

\begin{equation}\label{def:mk-nk}
M_k:=\left\{\#\left\{i:\lambda^\theta _i<L^k\right\}\right\} \mbox{ and }
N_k:=\#\left\{i:\lambda _{i}^{\theta}\in (L^{k-1},L^k]\right\}.
\end{equation}
 It is easy to check that
\begin{equation}\label{o80}
  \beta_\theta=1+\limsup\limits_{n\to\infty }
\frac{\log N_k}{k\log L} .
\end{equation}
 By the definitions \eqref{z4} and \eqref{o80} it is obvious that the following three statements hold:
 \begin{align}\label{o81}
 \forall \varepsilon_0>0\  \exists K_1(\varepsilon_0),\ \forall k:&  N_k \le K_1(\varepsilon_0)\cdot L_k^{\beta_\theta-1+\varepsilon_0},\\
\forall \varepsilon_0>0\  \exists K_2(\varepsilon_0), \ \forall k:& M_k \le K_2(\varepsilon_0)\cdot L_k^{\beta_\theta-1+\varepsilon_0},\nonumber \\
\label{o83}
\forall \varepsilon_0>0\  \exists a_k\uparrow\infty , \ \forall k:& \ N_{a_k} \ge L_{a_k}^{\beta_\theta-1-\varepsilon_0}.
\end{align}
In particular, the last statements \eqref{o83} yields the definition of a sequence $(a_k)_{k\ge 1}$: the $k$th element of the sequence $a_k$ is defined as the $k$th index of $N$ for which there are enough $\lambda_j$ falling in the interval $N_{a_k}$.
We are ready to define the regularity of the sequence $(\lambda_j)_{j\ge 1}.$
%
%
\begin{definition}\label{t220}
  We say that the sequence $\left(\lambda _j\right)_{j=1}^{\infty }$ is regular if
$\forall \varepsilon _0>0$ the sequence $\left(a_k-a_{k-1}\right)_{k\geq 2}$ is bounded. We denote
\begin{equation}\label{ajuli}A:=A(\varepsilon)=\max_{k\ge 1}\left(a_k-a_{k-1}\right)\end{equation}
\end{definition}

 In what follows we always assume that $\left(\lambda_j\right)_{j\ge 1}$ is regular.

\subsection{The Multifractal Spectrum of TCP CUBIC}
We present here our main results of the multifractal spectrum of TCP CUBIC.

 Let us define the following two regions in the first quadrant in the $\alpha, \beta$ plane:
\begin{align*}
 R_1&:=\left\{
 (\alpha,\beta):
 1 \leq \beta \leq 2 \mbox{ and }
 0
\leq \alpha \leq \frac{1}{\beta-2/3}
 \right\},\\
\label{z6}
R_2&:=\left\{
 (\alpha,\beta):
 1 \leq \beta \leq 2 \mbox{ and }
 \frac{1}{\beta-2/3} \leq
 \alpha
  \leq
  1+\frac{1}{\beta-2/3}
 \right\}.
\end{align*}

\clrgreen{ Furthermore, we partition $R_2$ into the lower and upper part $R_{2}^{\ell }$ and $R_{2}^{u}$.
 \begin{equation*}
 R^{\ell }_2:=\left\{
 (\alpha,\beta)\!:\!
 \beta \in[1,2],\
\alpha
  \in \big[\frac{1}{\beta\!-2/3},
  \frac{\beta}{\beta\!-2/3}\big]
 \right\}.
 \end{equation*}
  and
   \begin{equation*}
 R^{u}_2:=\left\{
 (\alpha,\beta)\!:\!
 \beta \in[1,2],\
\alpha
  \in \big[
  \frac{\beta}{\beta\!-2/3},
  1+\frac{1}{\beta\!-2/3}\big]
 \right\}.
 \end{equation*}}

\clrgreen{\begin{theorem}\label{z30}
Assume that $\left(\lambda_j\right)_{j=1}^{\infty}$ is regular. Then we have the following estimates on the multifractal spectrum of  TCP CUBIC for the increments ( $f^{(C)}(\alpha)$) and for the oscillations  $f_g^{\mathrm O(C)}(\alpha)$:
\begin{description}
  \item[(a)]
$f^{(C)}_g(\alpha)=\alpha(\beta_3-2/3)$ on   $R_1$,
\item[(b)]
$f^{(C)}_g(\alpha) \leq 1+\frac{1}{\beta_3-2/3}-\alpha$ on $R^\ell _2$.
\item[(c)]
$f_g^{\mathrm O(C)}(\alpha) \leq 1+\frac{1}{\beta_3-2/3}-\alpha$ on $R_2$.
\end{description}
where $\beta_3$ is as in \eqref{z4}.
\end{theorem}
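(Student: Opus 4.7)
The plan is to follow the large-deviation scheme of L\'{e}vy-V\'{e}hel--Rams~\cite{Rams2012} adapted to the cubic growth rule~\eqref{o84}. The starting point is the decomposition $\Delta_\ell^k Z = \sum_{j=1}^{\infty}\Delta_\ell^k Z_j$ and a classification of the indices $j$ into three regimes based on the relationship between the Poisson intensity $\lambda_j^3$ and the dyadic scale $2^\ell$: \emph{low} ($\lambda_j^3 \ll 2^\ell$, where a loss in $I_\ell^k$ is rare), \emph{matched} ($\lambda_j^3 \asymp 2^\ell$), and \emph{high} ($\lambda_j^3 \gg 2^\ell$, where many losses average out). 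The self-affinity~\eqref{o85} dictates the typical size of $\Delta_\ell^k Z_j$ in each regime, and the Blumenthal--Getoor counts $M_k,N_k$ from~\eqref{def:mk-nk}--\eqref{o83} govern how many terms contribute from each intensity class.

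\textbf{Upper bounds, parts (b) and (c).} Fix $(\alpha,\beta_3)\in R_2^\ell$. The core estimate to prove is
\[
\mathbb{E}\bigl[N_\ell^{\varepsilon}(\alpha)\bigr]\le 2^{\ell\,(1+1/(\beta_3-2/3)-\alpha)+o(\ell)},
\]
obtained by bounding the single-interval probability $\mathbb{P}\bigl(|\Delta_\ell^k Z|\ge 2^{-\ell(\alpha-\varepsilon)}\bigr)$ via Markov's inequality on the low-intensity tail (using~\eqref{o81}), Poisson concentration on the matched part, and a stationary moment estimate on the high-intensity part (using tightness and polynomial moments of the Markov chain $\{\Phi_k\}$), and then summing over $k=0,\dots,2^\ell-1$. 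A Borel--Cantelli argument then promotes this to the almost-sure upper bound on $f_g^{(C)}(\alpha)$. For part (c), the same scheme applies after replacing $\Delta_\ell^k Z$ by $\mathrm O_\ell^k Z$: because $g_w$ is monotone on each inter-loss interval, the oscillation of $Z_j$ on $I_\ell^k$ is bounded by its increment plus a contribution from each loss inside $I_\ell^k$, so the same Poisson counting applies. The extra slack gained this way is exactly what lets (c) extend from $R_2^\ell$ to the full $R_2$, while (b) remains confined to $R_2^\ell$.

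\textbf{Lower bound on $R_1$.} For $(\alpha,\beta_3)\in R_1$, one must produce approximately $2^{\ell\alpha(\beta_3-2/3)}$ dyadic intervals with increment $\approx 2^{-\ell\alpha}$. I would pick the intensity class whose unperturbed cubic growth over a $2^{-\ell}$-interval equals $2^{-\ell\alpha}$: by~\eqref{o83} and regularity (Definition~\ref{t220}), for every $\ell$ some $a_k$ lies within bounded distance of $\ell$, so this class contains at least $L_{a_k}^{\beta_3-1-\varepsilon_0}$ usable sources. Independence of their Poisson clocks then yields a Chernoff-type lower bound on the number of intervals on which the target cubic trajectory is realized, matching the upper bound on $R_1$ and delivering the equality in part (a).

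\textbf{Main obstacle.} The heart of the difficulty---absent in the linear TCP Reno setting---is the uniform polynomial moment control on $\{\Phi_k\}$. Because the cubic rule couples jump sizes nonlinearly to pre-loss values, before any large-deviation bound one must first establish stationarity of $\{\Phi_k\}$ with enough moments to make the Markov and Chernoff estimates sharp; condition~\eqref{o59} is precisely the summability needed to push a third-moment bound through the sum over intensity classes. The subsequent bookkeeping required to make the three intensity regimes match at the boundaries of $R_1$, $R_2^\ell$, and $R_2^u$ and to translate increment bounds into oscillation bounds is the source of the ``cumbersome'' technical detail alluded to in the introduction.
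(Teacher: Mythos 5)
Your outline reproduces the paper's strategy only on the region $R_1$: there the paper also bounds the tail probability $\mathbb{P}\bigl(|\Delta_\ell^k Z|\ge h^{\alpha}\bigr)$ by splitting the sources at the threshold $\lambda_j^\theta\lessgtr h^{-\alpha}$ (Chebyshev on the high-intensity block, a loss-probability count plus the conditional-expectation bound of Proposition~\ref{t207} on the low-intensity block), and then converts this into an almost-sure count via Lemma~\ref{o72} and Borel--Cantelli. But for parts (b) and (c) your plan has a genuine gap: on $R_2$ one has $\alpha>1/(\beta_3-2/3)$, so the typical increment (of order $h^{1/\beta'}$ with $\beta'=\beta_3-2/3$) is \emph{larger} than $h^{\alpha}$, and the number of intervals with $|\Delta_\ell^k Z|\ge h^{\alpha-\varepsilon}$ is of order $2^\ell$. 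A Markov/Chebyshev tail bound on $\mathbb{P}\bigl(|\Delta_\ell^k Z|\ge 2^{-\ell(\alpha-\varepsilon)}\bigr)$ therefore cannot produce any exponent below $1$, let alone $1+\frac{1}{\beta'}-\alpha<1$. What is actually needed is an \emph{anti-concentration} estimate for the two-sided window in \eqref{z27}: one must show that on each mesh interval the increment/oscillation is unlikely to be as \emph{small} as $h^{\alpha}$. The paper achieves this by proving (Lemma~\ref{t281}) that for every $I_\ell^k$ some source with $\lambda_j^\theta\sim h^{-(1+\varepsilon_1)/\beta'}$ has a $(\mathbf{K},\delta_0)$-regular (``good'') loss in $I_\ell^k$, and then running a convolution argument \eqref{t286} in which the conditional density of the jump size $Y=g_X(\tau)$ is bounded (estimate \eqref{z57}, which is where the condition $g_1'$ bounded away from $0$ off the small set $G_{\delta_0}$ enters), yielding $\mathbb{P}\bigl(\Delta Z\in\mathfrak{J}_{h,\varepsilon}\bigr)\lesssim h^{\alpha-1/\beta'-O(\varepsilon)}$ as in \eqref{c55}. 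Nothing in your sketch supplies this mechanism. Moreover, your explanation of why (c) holds on all of $R_2$ while (b) is restricted to $R_2^\ell$ is reversed: in the paper the oscillation is the easy object, since $\mathrm{O} Z\ge \mathrm{O}Z_j=(1-\eta)Y\lambda_j^{-\theta}$ by \eqref{t285-2} and \eqref{t292-osc}, and the increment inherits the bound only where the correction terms satisfy \eqref{t293}, i.e.\ only on $R_2^\ell$; it is not a matter of ``extra slack'' from Poisson counting of losses.

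Two further points. For the lower bound in (a) your argument only produces, in the matched intensity class, many intervals on which the chosen $Z_j$ has a large loss; it does not address the possibility that the remaining sources cancel that loss. The paper needs precisely this: the events $\underline{A}^\alpha(k,\ell)$ and $\overline{B}_j(k,\ell)$ of \eqref{z44} and \eqref{t261} (controlled via the variance bound of Proposition~\ref{t180} and Proposition~\ref{t207}), their independence from the loss event, the quantitative jump estimate of Fact~\ref{t242}, and the non-overlap (balls-in-urns) count of Fact~\ref{t253}. Finally, the ergodic input is not ``uniform polynomial moment control'': the paper proves geometric ergodicity with the drift function $V(x)=\exp\{\tilde c\, x^{1/\theta}\}$ (Theorem~\ref{t131}), giving \emph{exponential} tails for the stationary law (Corollary~\ref{t165}), and \eqref{o59} is used for Borel--Cantelli summability over the sources rather than for a third-moment bound.
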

}
The proof of this result follows from the proof of a more general result, Theorem~\ref{o79} presented in \secref{sec:generalization}.

\section{Comparison of the Multifractal Spectra of TCP CUBIC and TCP Reno}
\label{sec:comparison}

\begin{figure}
  \centering
  \includegraphics[width=7.5cm]{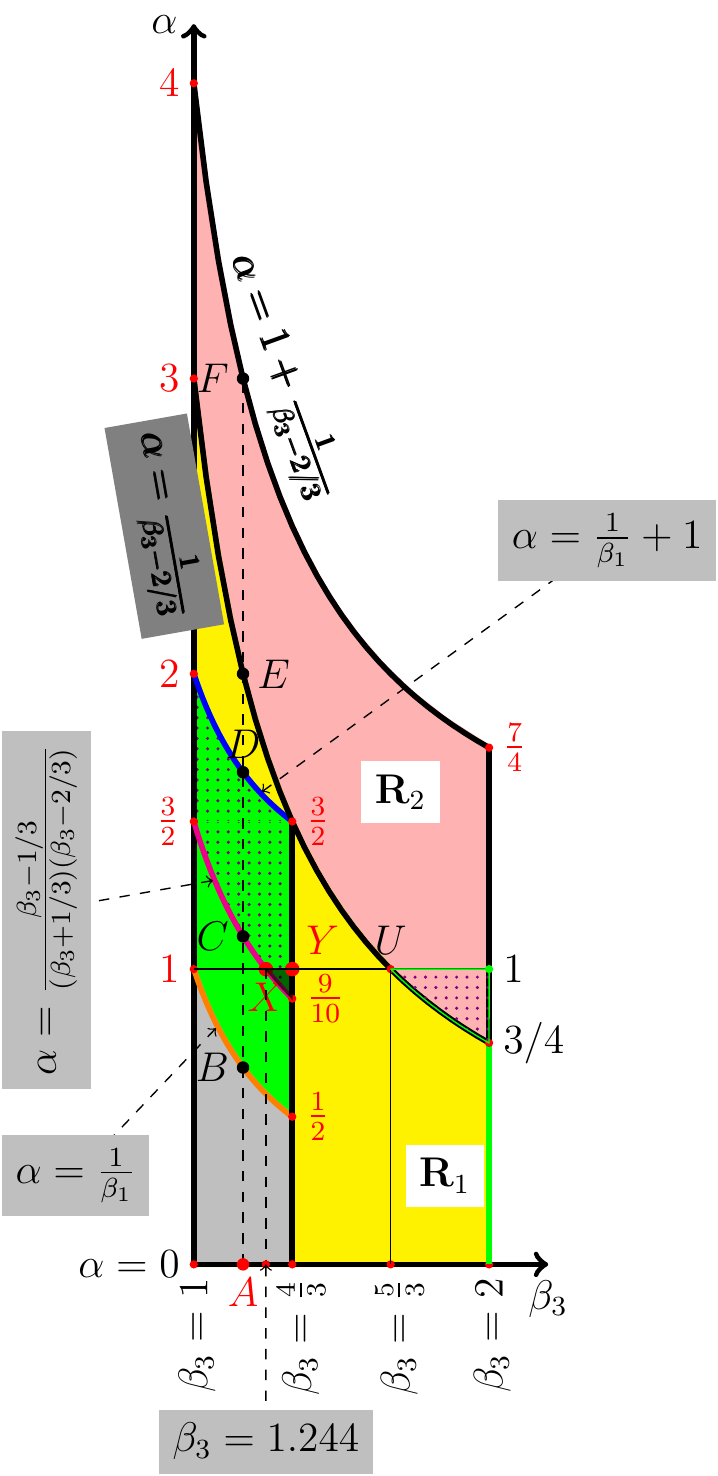}\\
  \caption{Parameter space of TCP Reno and TCP CUBIC for $\beta_1=3\left(\beta_3-\frac{2}{3}\right)$.}\label{z12}
\end{figure}
In this Section we compare our results, i.e., the multifractal spectrum of TCP CUBIC in Theorem \ref{z30} and the multifractal spectrum of TCP Reno obtained by L\'evy-V\'ehel, Rams \cite[Theorem III.4]{Rams2012} that we cite here for the readers convenience.
Recall the definition of $\beta_\theta$ from equation \eqref{z4}.

 \begin{theorem}[L\'evy-V\'ehel, Rams]\label{z29}
  Assume that $\left(\lambda_j\right)_{j\ge 1}$ is regular and
\begin{equation}\label{z10}
  \sum\limits_{j=1}^{\infty }\frac{1}{\lambda_j}<\infty
\end{equation}
The large deviation multifractal spectra for TCP Reno for $\beta_1\in(1,2)$ is
\begin{equation*}\label{o47}
  f^{(R)}_g(\alpha )=
\left\{
  \begin{array}{ll}
 \beta_1 \alpha    , & \hbox{if $\alpha \in [0,1/\beta_1 ]$;} \\
    1+1/\beta_1 -\alpha  & \hbox{if $\alpha \in [1/\beta_1 ,1+1/\beta_1 ]$;} \\
    -\infty  & \hbox{otherwise.}
  \end{array}
\right.
\end{equation*}
\end{theorem}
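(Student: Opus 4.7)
The plan is to prove the formula by obtaining matching upper and lower bounds via a scale-by-scale analysis of the increments, in the spirit of the analysis underlying Theorem \ref{z30}. First I would write $\Delta^k_\ell Z = \sum_{j\ge 1} \Delta^k_\ell Z_j$ and split the sources into three families according to whether $\lambda_j \ll 2^\ell$ (slow), $\lambda_j \sim 2^\ell$ (resonant), or $\lambda_j \gg 2^\ell$ (fast). For TCP Reno each $Z_j$ is a sawtooth: linear rise of slope $1$ between successive loss times, followed by a multiplicative drop by $(1-b)$ at each loss. So on a dyadic interval $I^k_\ell$ of length $2^{-\ell}$, slow sources almost surely contribute $\approx 2^{-\ell}$ with no loss, fast sources concentrate around their stationary mean, and essentially all of the stochastic variability lives in the resonant band $\lambda_j \sim 2^\ell$.

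For the upper bound I would use a Chernoff / moment generating function estimate for $\mathbb{P}(k\in J_{\alpha,\ell,\varepsilon})$, feeding in the Blumenthal--Getoor bound \eqref{o81} with $\theta=1$, namely $\#\{j:\lambda_j\sim 2^\ell\}\lesssim 2^{\ell(\beta_1-1+\varepsilon_0)}$, as the essential input. Summing over the $2^\ell$ dyadic intervals of level $\ell$ gives $\mathbb{E}[N^\varepsilon_\ell(\alpha)]\lesssim 2^{\ell(1-\psi(\alpha))}$ with $\psi(\alpha)=1-\beta_1\alpha$ on $[0,1/\beta_1]$ (small increments via either no-loss or many cancelling losses among resonant sources) and $\psi(\alpha)=\alpha-1/\beta_1$ on $[1/\beta_1,1+1/\beta_1]$ (large increments forced by an atypical concentration of loss events). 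A Borel--Cantelli argument then upgrades the expectation bound to an almost-sure bound on $\limsup_\ell \log N^\varepsilon_\ell(\alpha)/\log 2^\ell$, yielding the inequality $f^{(R)}_g(\alpha)\le 1-\psi(\alpha)$.

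For the lower bound I would, at each scale $\ell$, construct events on specific dyadic intervals that \emph{force} $|\Delta^k_\ell Z|\approx 2^{-\ell\alpha}$ by prescribing the behaviour of the resonant sources in that interval, and then use independence (across sources and across well-separated intervals) together with a second-moment / Paley--Zygmund argument on the number of successful intervals. The regularity assumption of Definition \ref{t220}, and in particular the finiteness of the gap $A$ from \eqref{ajuli}, is exactly what guarantees that a resonant scale actually exists at every level $\ell$, which is necessary for a lower bound matching the upper one.

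The hard part will be controlling the cumulative contribution of the non-resonant sources uniformly in $\ell$. A slow source can have an unexpected loss in some dyadic interval and locally ruin the intended scaling; a fast source requires concentration around its stationary mean tight enough not to overwhelm the resonant signal. The summability condition \eqref{z10} only secures almost-sure finiteness of $Z(t)$; the multifractal analysis needs quantitative tail estimates, which is why an extra moment hypothesis analogous to \eqref{o59} (with $\theta=1$) is the natural technical device. Equally delicate is disentangling the two branches of the spectrum: the ascending branch, driven by the variance of the resonant contributions, and the descending branch, driven by the Poisson tail of atypical numbers of losses, each require a separate treatment of the regimes $\alpha<1/\beta_1$ and $\alpha>1/\beta_1$.
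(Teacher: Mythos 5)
You should first note that the paper does not prove Theorem~\ref{z29} at all: it is quoted verbatim from L\'evy-V\'ehel and Rams \cite[Theorem III.4]{Rams2012} purely for comparison, and the paper's own machinery (Theorem~\ref{o79} specialized to $\theta=1$) recovers only part of it --- matching upper and lower bounds on the ascending branch ($R_1$), but only an \emph{upper} bound on the descending branch, and for increments only on $R_2^{\ell}$. So there is no internal proof to match; what can be compared is your sketch against the strategy of \cite{Rams2012}, which the paper mimics for general $\theta$. At that level your architecture (band decomposition of the sources, variance/Chernoff estimates plus Borel--Cantelli for the upper bound, a constructive event-based lower bound with a counting argument, regularity of $(\lambda_j)$ used to guarantee the relevant band is populated) is the right family of ideas.

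However, there are genuine gaps in the sketch itself. First, your ``resonant band'' $\lambda_j\sim 2^{\ell}$ is not the relevant scale. For Reno a loss of source $j$ produces a drop of size $\sim\lambda_j^{-1}$, so dyadic intervals with increment $\approx 2^{-\ell\alpha}$ are governed, on the ascending branch, by the $\alpha$-dependent band $\lambda_j\sim 2^{\ell\alpha}$ (by \eqref{o81} there are $\sim 2^{\ell\alpha(\beta_1-1)}$ such sources, each with $\sim 2^{\ell\alpha}$ losses, giving the count $2^{\ell\alpha\beta_1}$), and on the descending branch by $\lambda_j\sim 2^{\ell/\beta_1}$, the smallest intensity scale at which every mesh interval a.s.\ contains a loss. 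Sources with $\lambda_j\sim 2^{\ell}$ have jumps of size $2^{-\ell}\ll 2^{-\ell\alpha}$ and cannot drive the counts; with your fixed band the exponents $\beta_1\alpha$ and $1+1/\beta_1-\alpha$ do not come out. Second, you have the two branches' mechanisms interchanged: $\alpha<1/\beta_1$ corresponds to \emph{larger}-than-typical increments forced by a single loss of an $\alpha$-good source (not ``small increments via no-loss or cancelling losses''), while $\alpha>1/\beta_1$ corresponds to \emph{smaller}-than-typical increments, and the exponent $1+1/\beta_1-\alpha$ arises from an anti-concentration estimate --- the probability $\sim h^{\alpha-1/\beta_1}$ that the pre-loss value of the good source falls in a window of width $h^{\alpha}$ so that rise and drop nearly cancel (this is exactly the role of the density bound \eqref{z57} and the computation from \eqref{t294} to \eqref{c55} in the paper's $R_2$ argument) --- not from a Poisson tail on the number of losses. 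Finally, your plan does not address the $-\infty$ part of the statement or the lower bound on the descending branch (which even the present paper never establishes for any $\theta$), and by requiring a hypothesis stronger than \eqref{z10} you would be proving a weaker theorem than the one asserted.
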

It is elementary to see from the definition of $\beta_\theta$ in \eqref{z4} that
\begin{equation}\label{z8}
  \beta_1=3\left(\beta_3-\frac{2}{3}\right).
\end{equation}
Using this identity, we obtain that under condition \eqref{z10}
$
  \beta_3\in\left(1,\frac{4}{3}\right).
$
Now we fix an arbitrary $\beta_3\in\left(1,\frac{4}{3}\right)$ and  vary $\alpha$. This implies that the region considered by L\'evy-V\'ehel and Rams in Theorem \ref{z29} is contained in the region $R_1$, see also Figure~\ref{z12}. We remind the reader that in region $R_1$ we have a complete result (i.e., matching upper ad lower bounds, see Theorem \ref{z30}).  Based on the given parameter sequence $\left(\lambda_j\right)_{j=1}^{\infty }$ of our model,
$\beta_\theta$ for all $\theta \geq 1$ are determined by \eqref{z4}.
For these $\beta_1$ and $\beta_3$ (c.f. \eqref{z8}) we compare the  multifractal spectra of TCP Reno
($f_{g}^{(R)}(\alpha)$) and TCP CUBIC ($f_{g}^{(C)}(\alpha)$).

This comparison means that we move $\alpha$ upwards  on the dashed vertical line in Figure \ref{z12}, starting from $(\beta_3,0)$ (point $A$) all the way up to the point $E$ which is the intersection between the dashed vertical line $\beta_3=\mathrm{const}$ and the upper boundary of region $R_1$.
The behavior of the large deviation multifractal spectra on this dashed line is shown in Figure \ref{z13}. We obtain the following corollary:

\begin{corollary}\label{z26} When comparing the multifractal spectrum of TCP RENO, $f_{g}^{(R)}(\alpha)$ and that of TCP CUBIC $f_g^{(C)}(\alpha)$, we have the following inequalities:
\begin{description}
  \item[(a)] For  $1 \leq \beta_3 \leq 1.244$ and for  $\alpha \leq 1$ we have $f_{g}^{(C)}(\alpha)<f_{g}^{(R)}(\alpha)$.
  \item[(b)] For $\beta_3\in\left(1.244,\frac{4}{3}\right)$ and for
   $\alpha<\frac{\beta_3-1/3}{(\beta_3+1/3)(\beta_3-2/3)}$ we again have
   $f_{g}^{(C)}(\alpha)<f_{g}^{(R)}(\alpha)$. In particular this happens for all $\beta<4/3$ when $\alpha<\frac{9}{10}$.
   \item[(c)] If $\beta_3\in\left(1.244,\frac{4}{3}\right)$ then for
   $\alpha>\frac{\beta_3-1/3}{(\beta_3+1/3)(\beta_3-2/3)}$ we have
   $f_{g}^{(C)}(\alpha) \geq f_{g}^{(R)}(\alpha)$.
\end{description}
\end{corollary}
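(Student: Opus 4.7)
The plan is a straightforward piecewise-linear comparison of the two explicit spectra, leveraging the identity $\beta_1 = 3(\beta_3 - 2/3)$ from \eqref{z8}. Under this substitution, Theorem~\ref{z30}(a) on $R_1$ simplifies to $f_g^{(C)}(\alpha) = \beta_1 \alpha/3$. Before anything else I would verify that the L\'evy-V\'ehel--Rams domain $[0,\,1+1/\beta_1]$ sits entirely in the slice of $R_1$ above the dashed vertical line of Figure~\ref{z12}: the upper boundary of $R_1$ at fixed $\beta_3$ is $\alpha = 1/(\beta_3 - 2/3) = 3/\beta_1$, which exceeds $1 + 1/\beta_1$ precisely when $\beta_1 < 2$, i.e.\ $\beta_3 < 4/3$. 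This places us in the regime where the complete result of Theorem~\ref{z30}(a) applies, so that $f_g^{(C)}(\alpha) = \beta_1 \alpha/3$ throughout the Reno domain.

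Next I would split $[0,\,1+1/\beta_1]$ at the Reno peak $\alpha = 1/\beta_1$. On $[0,\,1/\beta_1]$ the difference $f_g^{(R)}(\alpha) - f_g^{(C)}(\alpha) = \beta_1 \alpha - \beta_1 \alpha/3 = 2\beta_1 \alpha/3$ is strictly positive for $\alpha > 0$. On the decreasing branch $[1/\beta_1,\,1 + 1/\beta_1]$, solving $1 + 1/\beta_1 - \alpha = \beta_1 \alpha/3$ yields a unique crossing at $\alpha^{\star} = 3(\beta_1+1)/(\beta_1(\beta_1+3))$. Reinserting $\beta_1 = 3(\beta_3 - 2/3)$ gives
\[
\alpha^{\star} \;=\; \frac{\beta_3 - 1/3}{(\beta_3 + 1/3)(\beta_3 - 2/3)},
\]
which is exactly the threshold appearing in (b)--(c). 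By the monotonicity of the two linear branches, $f_g^{(C)} < f_g^{(R)}$ on $[1/\beta_1,\,\alpha^{\star})$ and $f_g^{(C)} \ge f_g^{(R)}$ on $(\alpha^{\star},\,1+1/\beta_1]$; past the Reno domain $f_g^{(R)} = -\infty$ so $f_g^{(C)} > f_g^{(R)}$ trivially.

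To finish I would locate the critical value of $\beta_3$ at which $\alpha^{\star} = 1$. This reduces to $\beta_1^2 = 3$, i.e.\ $\beta_1 = \sqrt{3}$ and $\beta_3 = (2 + \sqrt{3})/3 \approx 1.244$. A direct derivative check shows that $\alpha^{\star}(\beta_1)$ is strictly decreasing on $(1,2]$ with $\alpha^{\star}(2) = 9/10$, so that for every $\beta_3 < 4/3$ one has $\alpha^{\star} > 9/10$, which yields the universal bound in (b). Consequently, $\beta_3 \le 1.244$ forces $\alpha^{\star} \ge 1$, so CUBIC stays strictly below Reno on all of $\alpha \le 1$, giving (a); whereas $\beta_3 \in (1.244,\,4/3)$ pushes $\alpha^{\star}$ into $(1/\beta_1,\,1)$, and splitting the comparison at $\alpha^{\star}$ delivers (b) and (c) simultaneously.

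The argument is essentially algebraic bookkeeping once Theorems~\ref{z30} and \ref{z29} are in hand. The only step requiring genuine care is the containment check confirming that the whole Reno domain remains inside the \emph{tight} portion $R_1$ of the CUBIC spectrum, rather than spilling into $R^{\ell}_2$ where only an upper bound is known; otherwise the equality $f_g^{(C)}(\alpha) = \beta_1 \alpha/3$ could not be invoked in the reverse direction needed for (c).
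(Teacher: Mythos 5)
Your proposal is correct and follows essentially the same route as the paper: both reduce to the piecewise-linear comparison of $f_g^{(R)}(\alpha)$ and $f_g^{(C)}(\alpha)=\beta_1\alpha/3$ along the fixed-$\beta_3$ line using the identity $\beta_1=3(\beta_3-2/3)$, with the crossing point $\alpha^{\star}=\frac{\beta_3-1/3}{(\beta_3+1/3)(\beta_3-2/3)}$ and the critical value $\beta_3=(2+\sqrt{3})/3\approx 1.244$. Your explicit check that the whole Reno domain $[0,1+1/\beta_1]$ lies inside $R_1$ (where the CUBIC spectrum is known exactly, not just bounded above) is exactly the point the paper relies on implicitly, so nothing is missing.
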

In the rest of this section  we use the notation of Section \ref{z28}. Recall that $J_{\alpha, \ell, \varepsilon}$ stands for those level-$\ell$ diadic intervals on which the increment of $Z$ on is approximately, $|2^{\ell}|^\alpha$. Further, $N_\ell^\varepsilon(\alpha)=\# J_{\alpha, \ell, \varepsilon}$ is the number of these diadic intervals and finally $f_g(\alpha)\sim \log N_\ell^\varepsilon(\alpha)$.
 Note that the smaller the $\alpha$, the larger the increment and hence a larger multifractal spectrum function $f$ for the case $\alpha<1$  is of great importance to traffic analysis point of view since in this case the traffic is bursty.
The multifractal spectra of TCP Reno and TCP CUBIC shows that both TCP versions generate bursty traffic, however, we see in Corollary \ref{z26} that for almost all cases $f_{g}^{(C)}(\alpha)<f_{g}^{(R)}(\alpha)$, i.e., TCP Reno is more bursty than TCP CUBIC. It can happen only for $\beta_3\in\left(1.244,\frac{4}{3}\right)$ and $\frac{9}{10}<\alpha$ that $f_{g}^{R}(\alpha)<f_{g}^{C}(\alpha)$. This exceptional region is the small black triangle with one side aligned  with the $\beta_3=4/3$ line with right upper vertex $Y$ in Figure \ref{z12}. However, in this special case the contribution to burstiness is not large since it comes only from $\alpha>9/10$, thus the increments of $Z$ in this region are smaller than in the case of small $\alpha$'s. In other words, the increments with small $\alpha$ values dominate the traffic burstiness. As a general observation we can conclude that the traffic of TCP CUBIC is less bursty than the traffic of TCP Reno.

In the above comparison we discussed the behavior of traffic for those $(\lambda_j)_{j\ge 1}$ sequences for which both the TCP Reno solution and the TCP CUBIC exists, i.e., $\beta_3\in\left(1,\frac{4}{3}\right)$. However, for $\beta_3>\frac{4}{3}$, we have solution for TCP CUBIC, see Theorem~\ref{z30} and Figure~\ref{z12}. So, we have the following result.

\begin{corollary}
\begin{description}\
  \item[(a)] For  $\frac{4}{3} < \beta_3 \leq \frac{5}{3}$ and for  $\alpha \leq 1$ we have $f^{(C)}_g(\alpha)=\alpha(\beta_3-2/3)$.
  \item[(b)] For $\frac{5}{3}< \beta_3 \leq 2$ and for  $\alpha \leq \frac{1}{\beta_3-2/3}$ we again have
   $f^{(C)}_g(\alpha)=\alpha(\beta_3-2/3)$.
   \item[(c)]  For $\frac{5}{3}< \beta_3 \leq 2$ and for  $\frac{1}{\beta_3-2/3}$ < $\alpha \leq 1$ we have
   $f^{(C)}_g(\alpha) \leq 1+\frac{1}{\beta_3-2/3}-\alpha$.
\end{description}
\end{corollary}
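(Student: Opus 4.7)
The plan is to derive this corollary as a direct consequence of Theorem \ref{z30}, by carefully identifying which of the subregions $R_1$ or $R_2^{\ell}$ the point $(\alpha,\beta_3)$ falls into for each of the three parameter regimes. Since Theorem \ref{z30}(a) gives the exact value of $f_g^{(C)}(\alpha)$ on $R_1$, and Theorem \ref{z30}(b) gives the upper bound on $R_2^{\ell}$, the entire argument reduces to locating $(\alpha,\beta_3)$ inside the already-defined regions.

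First I would record the values of the boundary function $\beta \mapsto 1/(\beta-2/3)$ at $\beta=4/3, 5/3, 2$, which are $3/2, 1, 3/4$ respectively, noting that this function is strictly decreasing in $\beta$. For part (a), $\beta_3\in(4/3,5/3]$ gives $1/(\beta_3-2/3)\in[1,3/2)$, so any $\alpha\le 1$ automatically satisfies $\alpha\le 1/(\beta_3-2/3)$, placing $(\alpha,\beta_3)\in R_1$, and Theorem \ref{z30}(a) gives the claimed equality. For part (b), $\beta_3\in(5/3,2]$ yields $1/(\beta_3-2/3)\in[3/4,1)$; for $\alpha\le 1/(\beta_3-2/3)$ the point is again in $R_1$, and Theorem \ref{z30}(a) applies.

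For part (c), with $\beta_3\in(5/3,2]$ and $\alpha\in(1/(\beta_3-2/3),1]$, I would verify that $(\alpha,\beta_3)\in R_2^{\ell}$ by checking the upper bound $\alpha\le \beta_3/(\beta_3-2/3)$. Since $\beta_3>1$, we have $\beta_3/(\beta_3-2/3)>1/(\beta_3-2/3)$; more concretely, for $\beta_3\in(5/3,2]$ one computes $\beta_3/(\beta_3-2/3)\in[3/2,5/3]$, which is strictly greater than $1\ge \alpha$. Hence $(\alpha,\beta_3)\in R_2^{\ell}$, and Theorem \ref{z30}(b) yields the upper bound $f^{(C)}_g(\alpha)\le 1+1/(\beta_3-2/3)-\alpha$.

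I do not expect any genuine obstacle here: the proof is purely a matter of elementary inequalities placing a point inside a previously defined region and then quoting Theorem \ref{z30}. The only mild subtlety is consistency at the boundary $\beta_3=5/3$, where $1/(\beta_3-2/3)=1$ so that the thresholds in (a) and (b) coincide and the two statements agree on the overlap; and at $\alpha=1/(\beta_3-2/3)$ in case (b) versus (c), where both the linear value $\alpha(\beta_3-2/3)=1$ and the decreasing bound $1+1/(\beta_3-2/3)-\alpha=1$ coincide, so the piecewise description is continuous and well-defined.
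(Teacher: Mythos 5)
Your proposal is correct and is essentially the paper's own (implicit) argument: the corollary is stated as a direct consequence of Theorem~\ref{z30}, obtained by locating $(\alpha,\beta_3)$ in $R_1$ or $R_2^{\ell}$ exactly as you do. The only cosmetic slip is the endpoint bookkeeping in part (c) ($\beta_3/(\beta_3-2/3)\in[3/2,5/3)$ on $(5/3,2]$ rather than $[3/2,5/3]$), which does not affect the conclusion since the bound exceeds $1\ge\alpha$ in any case.
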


\begin{figure}
  \centering
  \includegraphics[width=8cm]{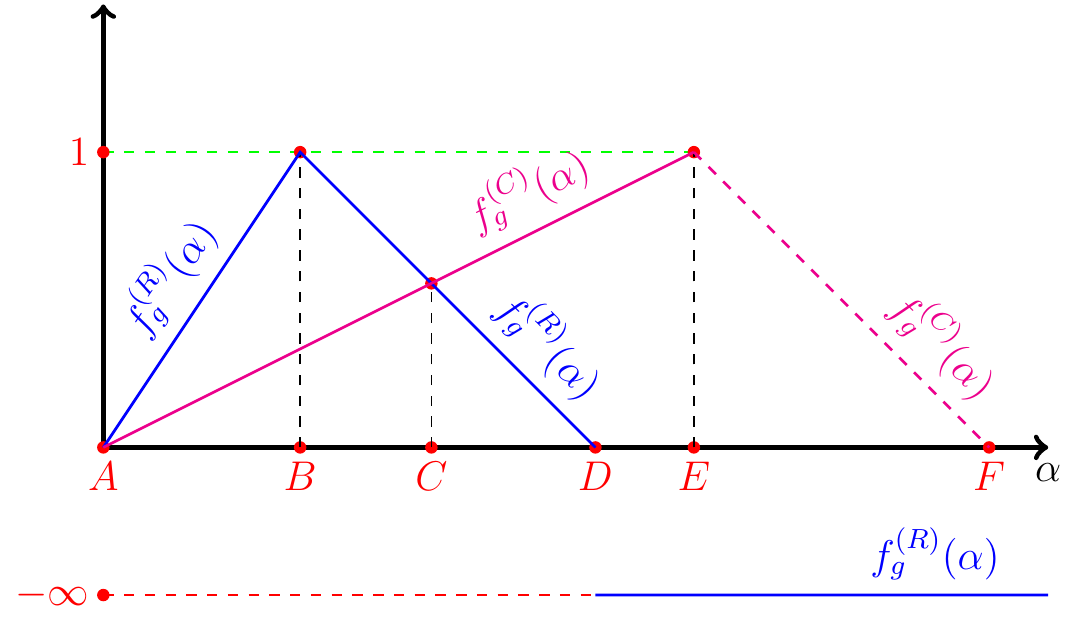}\\
  \caption{Multifractal spectra of TCP Reno and TCP CUBIC. This happens on the vertical dashed line segment $A-E$ in Figure \ref{z12}.}\label{z13}
\end{figure}

\section{Generalization of the TCP CUBIC Process}
\label{sec:generalization}

We shall prove the results stated in the previous section for a more general family of random processes. This general family includes not only both the TCP CUBIC and TCP Reno as special cases but many other stochastic processes which are infinite sums of random functions. The main point of the generalization is that we replace the very specific family, $\left\{g_x(t)\right\}_{x>0}$ defined in \eqref{o84}, with a much more general family of functions. This generalization is carried out based on the self-affine property \eqref{o85} of  $\left\{g_x(t)\right\}_{x>0}$ (cf.\eqref{0123}).

\subsection{Heuristic description of the generalization with an example}\label{z22}

In the general case, we also consider the infinite sum
\begin{equation*}\label{z18}
  Z(t)=\sum\limits_{j=1}^{\infty }Z_j(t),
\end{equation*}
 where $Z_j(t)$ is defined in a way which is similar to the case of TCP CUBIC model:
\begin{itemize}
  \item $Z_j(t)$ increases according to a deterministic rule in between two consecutive random points of losses.
  \item The random points of losses of $Z_j(t)$ are chosen according to a Poisson process of intensity $\lambda_j$ with $1=\lambda_1<\lambda_2<\dots$
  \item The deterministic rule of growth between the consecutive points of losses are governed by a self-affine family of functions like the one in \eqref{o85} with the exponent $3$ in \eqref{o85} replaced by a general  $\theta \geq 1$.
\end{itemize}
We remark that the $\theta=1$ case is essentially settled by L\'evy-V\'ehel, Rams \cite[Theorem III.4]{Rams2012}.

To highlight the meaning of the abstract definition of $\left\{g_x(t)\right\}_{x>0}$ given below in Section  \ref{z31}, as an intermediate step, first we give an example which is included in the general case.

\begin{example}\label{z15}
Let $\left\{g_x(t)\right\}_{x>0}$ be defined as follows:
  \begin{equation}\label{t123}
    g_x(t)=x \cdot g_1\left(\frac{t}{x^{1/\theta}}\right),
    \quad x>0,t>0,
  \end{equation}
where $g_1(t)$ is  an arbitrary polynomial satisfying:
\begin{description}
  \item[(a)] The order of $g_1$ is $\theta$,
  \item[(b)] $g'_1(t) \geq 0$ for every $t\in\mathbb{R}^+$ and $g'_1(0)>0$,
  \item[(c)] $g_1(0)\in(0,1)$.
\end{description}
\end{example}
Example~\ref{z15} covers both TCP CUBIC and TCP Reno. Namely, we get the TCP CUBIC model with the choice of $\theta=3$ and
 \begin{equation}\label{t147}
      g_x(t)=x+C\left(t-\sqrt[3]{\frac{b}{C}} \cdot x\right)^3
      \mbox{ with }
 b=0.7,\ C=0.4.
    \end{equation}
 Similarly, the TCP Reno is included in Example~\ref{z15} with $\theta=1$ and $g_x(t):=x/\mu+t$ for a constant $\mu>1$.

 \subsection{The definition of $Z(t)$ in the general case}\label{z31}
 The most general definition of the family $\left\{g_x(t)\right\}_{x>0}$  given below differs from the one in Example \ref{z15} in the following way: We preserve the self-affine property by assuming  (A1) below.  Although we no longer require that $\theta$ is an integer, we would still like to preserve some properties of order $\theta$ polynomial $g_1(t)$ in Example \ref{z15}.  This is why we assume (A2) and (A3) below.
\begin{definition}\label{z16}
  For every $x>0$, $g_x:(0,\infty )\to(0,\infty )$  such that $(x,t)\mapsto g_x(t)$ is a $C^\infty $
  function satisfying
  the following assumptions:
There exists a $\theta \geq 1$ exponent such that

\begin{description}
  \item[(A1)] \emph{Self-affine property:}
  For every $0<r,t<\infty $ we have
  \begin{equation}\label{0123}
    \frac{1}{r^\theta}g_x(t)=g_{x/r^\theta}
    \left(\frac{t}
    {r}\right)
  \end{equation}
  The properties (A2) and (A3)guarantee  that $g_1(t)$ behaves similar to the polynomial in Example \ref{z15}:
  \item[(A2)] \emph{Growth properties:}
  \begin{description}
  \item[(A2a)]
  The derivative of $g^{1/\theta}_1(t)$ is a bounded function on $[0,\infty )$. That is,
 \begin{equation*}\label{t142}
\exists \psi>0,\ \dfrac{d}{dt}\left(g_1(t)^{1/\theta}\right)<\psi, \quad \forall t \geq 0.
 \end{equation*}
\item[(A2b)] There exits $c_1>0$ such that
\begin{equation}\label{t143}
  g_1(t) \geq c_1 t^\theta.
\end{equation}
\end{description}
\item[(A3)] \emph{Regularity property:}
We assume that
$g'_1(t)$ has finitely many zeros, $g'_1(t) \geq 0$ that is $g_1(t)$ is increasing and
\begin{equation*}\label{z17}
  g'_1(0)>0 \mbox{ and }
  \eta:=g_1(0)\in(0,1).
  \end{equation*}
\item[(A4)] The sequence of the intensities $\left(\lambda_j\right)_{j=0}^{\infty }$ of the independent Poisson point processes $\left(T_{k}^{(j)}\right)$ (defined  in Section \ref{z14}) satisfy:
    \begin{enumerate}
     \item $1=\lambda_1<\lambda_2< \cdots <\lambda_n< \cdots $
  \item $\left(\lambda_j\right)_{i=1}^{\infty }$ is regular in the sense of Definition \ref{t220})
  \item We assume that
  \begin{equation}\label{o59}
\sum\limits_{j=1}^{\infty }\frac{\log \lambda_j}{\lambda _{j}^{\theta}}<\infty .
\end{equation}

    \end{enumerate}
\end{description}
\end{definition}
Note that \eqref{0123}
means that for $0<a$
we have
\begin{equation*}\label{z25}
  \left(
  \begin{array}{cc}
    a & 0 \\
    0 & a^\theta \\
  \end{array}
\right) \cdot
\mathrm{graph}(g_x)=\mathrm{graph}(g_{a^\theta x}).
\end{equation*}
This is why we call the family $\left\{g_x(t)\right\}_{x>0}$ self-affine.
The definition of the random function $Z_j(t)$ in the general case is the same as in Section \ref{z14} with the only modification that we use in \eqref{o58} the previously defined more general version of $\left\{g_x(t)\right\}_{x>0}$. That is, $Z_j(t)$ is defined in a right-continuous way:
\begin{equation*}\label{z19}
  \large{Z_j(t):=g_{Z_j(T^{(j)-}_{k-1})}\left(t-T^{(j)}_{k-1},\right)}.
\end{equation*}
Observe that by the self-affine property of $g_x(t)$ we have the distributional identity
\begin{equation}\label{t171}
  \frac{1}{\lambda_{j}^{\theta}}Z_1(\lambda_j t)
  \law Z_j(t),
\end{equation}
This completes the definition of  $Z(t)=\sum\limits_{j=1}^{\infty }Z_j(t)$. See Corollary~\ref{t226} that $Z(t)<\infty$,  $\forall t\in[0,1]$.

\subsection{Our result in the general settings}
From now on we always write $f_g(\alpha):=f_{g}^{(\theta)}(\alpha)$ for the Large Deviation Multifractal spectrum of $Z(t)$ (see Definition \ref{t205}).
Observe that it follows from the definition of  $f_g(\alpha)$ and from \eqref{t171} that $f_g(\alpha)$ remains the same if we change from the sequence of intensities $\left(\lambda_j\right)_{j\ge 1}$ to $ \left(\mathrm{const} \cdot\lambda_j\right)_{j\ge 1}$. So, without loss of generality we may assume that $\lambda_1=1$.

First we define the regions of the $\alpha,\beta$ plane
 \begin{equation}\label{z20}
     R_1:=\left\{
 (\alpha,\beta)\!:\!
 \beta\in[1,2],\
     \alpha\in \big[0, \frac{1}{\beta-(1-1/\theta)}\big]
 \right\},
 \end{equation}
and
\begin{equation*}\label{z21}
 R_2:=\left\{
 (\alpha,\beta)\!:\!
 \beta \in[1,2],\
\alpha
  \in \big[\frac{1}{\beta\!-\!(1\!-\!1/\theta)},
  1+\frac{1}{\beta\!-\!(1\!-\!1/\theta)}\big]
 \right\}.
 \end{equation*}
\clrgreen{ Furthermore, we partition $R_2$ into the lower and upper part $R_{2}^{\ell }$ and $R_{2}^{u}$.
 \begin{equation*}\label{z94}
 R^{\ell }_2:=\left\{
 (\alpha,\beta)\!:\!
 \beta \in[1,2],\
\alpha
  \in \big[\frac{1}{\beta\!-\!(1\!-\!1/\theta)},
  \frac{\beta}{\beta\!-\!(1\!-\!1/\theta)}\big]
 \right\}.
 \end{equation*}
  and
   \begin{equation*}\label{z94}
 R^{u}_2:=\left\{
 (\alpha,\beta)\!:\!
 \beta \in[1,2],\
\alpha
  \in \big[
  \frac{\beta}{\beta\!-\!(1\!-\!1/\theta)}\big],
  1+\frac{1}{\beta\!-\!(1\!-\!1/\theta)}
 \right\}.
 \end{equation*}}

 See  Figure \ref{o60}.

   \begin{figure}[H]
  \begin{center}
  \includegraphics[height=8cm]{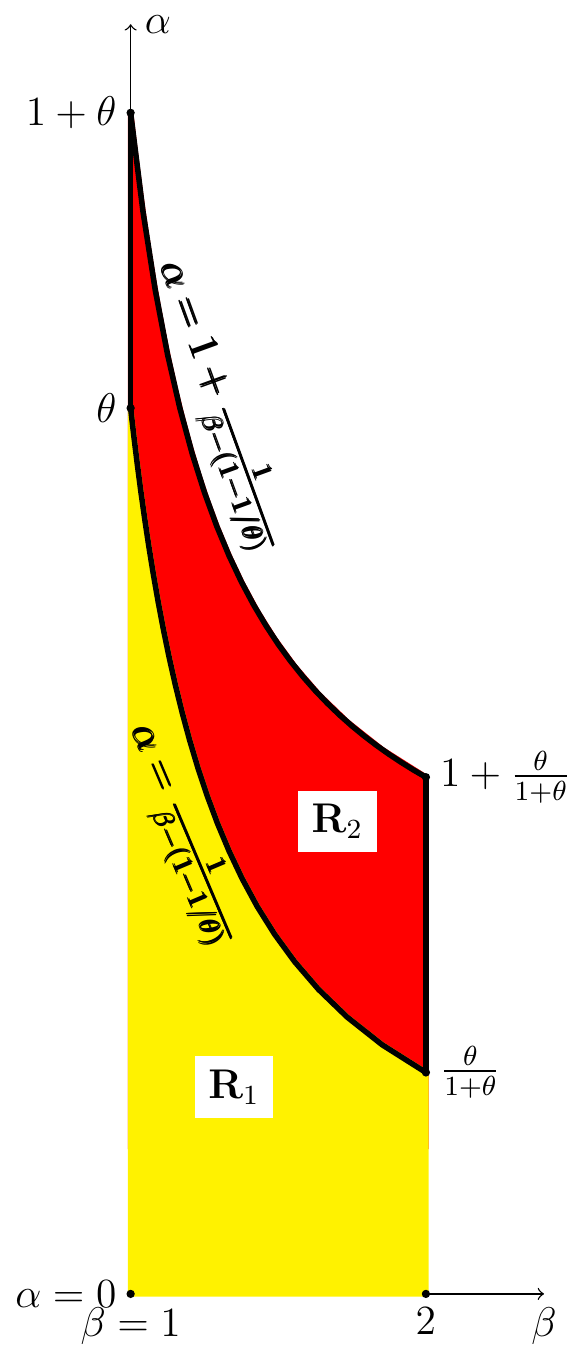}
   \includegraphics[height=8cm]{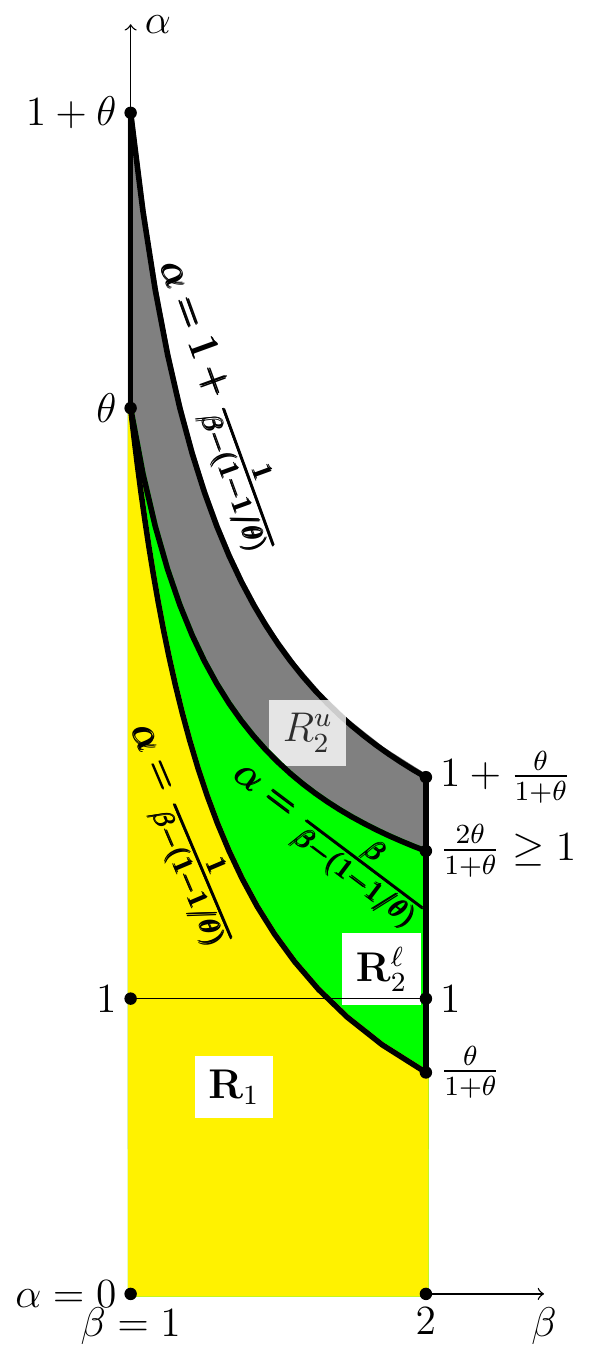}\\
  \caption{Here $\beta=\beta_\theta$ defined in \eqref{z4}. }
  \label{o60}
  \end{center}
\end{figure}
As a generalization of Theorem \ref{o79} we state:
 \clrgreen{\begin{theorem}\label{o79}
Let $f_g(\alpha)$ be the large deviation multifractal spectrum for the increments and $f_g^{\mathrm O}$ for the oscillations
 of the random function $Z(t)$ defined in Section \ref{z22}. We assume that the sequence of intensities $\left(\lambda_j\right)_{j=1}^{\infty }$ satisfies the assumption {\bf(A4)} in Definition \ref{z16}. Then we have
\begin{description}
  \item[(a)] $f_g(\alpha)=\alpha(\beta_{\theta}-(1-1/\theta))$ on $R_1$,
  \item[(b)]
$f_g(\alpha) \leq 1+\frac{1}{\beta_{\theta}-(1-1/\theta)}-\alpha$ on $R^\ell _2$,
 \item[(c)]
$f_g^{\mathrm O}
(\alpha) \leq 1+\frac{1}{\beta_{\theta}-(1-1/\theta)}-\alpha$ on $R _2$.
\end{description}
 \end{theorem}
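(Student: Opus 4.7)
The plan is to adapt the strategy of L\'evy--V\'ehel and Rams \cite{Rams2012} from $\theta=1$ to general $\theta\ge 1$, taking into account that their ``increment $\le$ slope $\times$ length'' bound, valid for the piecewise-linear Reno case, no longer holds for the self-affine family $\{g_x\}$ when $\theta>1$. The backbone has four ingredients: a scale-dependent truncation that reduces $Z$ to a finite sum, a classification of the sources into $L$-adic shells by intensity, sharp per-source deterministic bounds for increments and oscillations on dyadic intervals, and a union-bound / Borel--Cantelli argument driven by Poisson tail estimates together with the regularity of the sequence $(\lambda_j)_{j\ge 1}$.

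First I would choose a cut-off $J=J(\ell)$ with $\lambda_J^\theta$ slightly larger than $2^\ell$ and split $Z=Z^{\le J}+Z^{>J}$. Using the distributional identity \eqref{t171}, the growth assumptions \textbf{(A2a)}--\textbf{(A2b)}, and the summability \eqref{o59}, the tail $Z^{>J}$ should contribute less than $2^{-\ell(\alpha+2\varepsilon)}$ to the increment and oscillation of every level-$\ell$ dyadic interval, with probability tending to $1$ fast enough that Borel--Cantelli permits replacing $Z$ by $Z^{\le J}$ inside the definitions of $N_\ell^{\varepsilon}(\alpha)$ and $N_\ell^{\varepsilon,\mathrm O}(\alpha)$. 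I would then group the surviving sources into shells $\lambda_j^\theta\in(L^{k-1},L^k]$; by \eqref{o81} each shell has at most $K_1(\varepsilon_0)L_k^{\beta_\theta-1+\varepsilon_0}$ elements, and the shells with $L_k\approx 2^\ell$ are the ``resonant'' ones producing the bulk of the multifractal activity.

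The technical heart of the proof is a per-source deterministic estimate of $\Delta_\ell^k Z_j$ and $\mathrm O_\ell^k Z_j$ in terms of the number $m_{\ell,k,j}$ of loss events of source $j$ in $I_\ell^k$ and the age of the current lap of $Z_j$. Using $g_x(t)=x\,g_1(t/x^{1/\theta})$ together with \textbf{(A2a)}--\textbf{(A3)}, these two quantities should be bounded by explicit polynomial expressions in $\lambda_j 2^{-\ell}$ and $m_{\ell,k,j}$, and it is here that the factor $1-1/\theta$ which eventually appears in the spectrum is generated. Feeding this into a union bound over dyadic intervals and resonant shells, and controlling both the Poisson tail of $m_{\ell,k,j}$ and the exponential tail $e^{-\lambda_j t}$ for the nearest-loss distance, yields the upper bounds in parts~(b) and~(c); for part~(c) on $R_2^u$ one has to treat separately a single low-intensity source whose in-lap excursion is atypically large, a contribution that may cancel in $\Delta_\ell^k Z$ but not in $\mathrm O_\ell^k Z$. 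For the matching lower bound in part~(a) I would work at scales $\ell$ resonant with an index $a_k$ from the subsequence in \eqref{o83}, construct admissible intervals explicitly by requiring a single well-placed loss in a source of intensity $\lambda_{a_k}$, and convert the resulting first-moment count into an almost-sure lower bound via a second-moment argument, using independence across shells and the bounded-gap property \eqref{ajuli} on $a_k-a_{k-1}$.

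The main obstacle, flagged by the authors in the introduction, is the per-source estimate just mentioned. When $\theta>1$ the derivative of $g_1$ grows like $t^{\theta-1}$ and is unbounded, so a dyadic interval lying deep inside a lap has an increment much larger than a naive ``length times slope'' bound would suggest; moreover, an interval straddling a loss event sees a large positive growth nearly cancelled by the multiplicative drop $g_w(0)=\eta w$, and these two effects must be disentangled. The fix is a three-way case split---interval at the start of a lap, deep inside a lap, or straddling a loss event---with each case reduced via the self-affine identity \eqref{0123} to a uniform statement about $g_1$; keeping the resulting error terms compatible across the case split, through the union bound over dyadic intervals, and through the tail truncation of Step~1 is where most of the technical work lies.
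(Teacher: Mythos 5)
Your overall skeleton (shells by intensity, per-source bounds from the self-affine structure, union bounds plus Borel--Cantelli, regularity for the lower bound) points in the right direction, but two of your load-bearing steps do not survive contact with the actual process. First, the truncation step: you cannot replace $Z$ by $Z^{\le J}$ by claiming that the tail $\sum_{\lambda_j^\theta\gtrsim 2^{\ell}}Z_j$ contributes less than $2^{-\ell(\alpha+2\varepsilon)}$ to \emph{every} level-$\ell$ interval with high probability. A single loss of a source just above the cut-off already produces an increment of order $2^{-\ell}$, and such losses occur in a positive fraction of the intervals; since on $R_2$ (and on $R_1$ when $\beta_\theta-(1-1/\theta)<1$) the exponent $\alpha$ exceeds $1$, one has $2^{-\ell}\gg 2^{-\ell\alpha}$ there, and even for $\alpha<1$ the aggregated fluctuation of the high-intensity part is of order $h^{(3-\beta')/2}$ with $\beta'=\beta_\theta-(1-1/\theta)$, which beats $h^{\alpha}$ near the top of $R_1$ when $\beta'<1$. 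This is why the paper never discards the high-frequency sources: it keeps them in the sum $\underline T^{k,\ell}_{h^{-\alpha}}$, bounds its variance (Fact \ref{t202}), and then bounds the \emph{number} of intervals on which it exceeds $h^{\alpha}/2$ (Corollary \ref{t265}, \eqref{t270}); that count is of the same order $h^{-\alpha\beta'}$ as the spectrum itself, so these intervals must be counted, not removed by an almost-sure uniform bound. Your lower-bound sketch inherits the same gap, because the event that the chosen resonant source has a big loss must be intersected with the event that \emph{all other} sources (including the high-intensity ones you truncated away) contribute less than $h^{\alpha}/2$ on the same interval; in the paper this is exactly the role of $\underline A^{\alpha}(k,\ell)$ and $\overline B_j(k,\ell)$, obtained from the variance/Chebyshev machinery, not from a uniform truncation.

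Second, your mechanism for parts (b) and (c) is the wrong one. On $R_2$ one has $\alpha>1/\beta'$, so $h^{\alpha}$ lies \emph{below} the typical oscillation $h^{1/\beta'}$, and $N_\ell^{\varepsilon}(\alpha)$ counts intervals whose increment/oscillation falls into the narrow two-sided window $\mathfrak J_{h,\varepsilon}$ of \eqref{jhe}, i.e.\ is atypically small and of a prescribed size. Poisson tail bounds and ``length times slope'' estimates control how often increments are \emph{large}, and can therefore only give the trivial bound $2^{\ell}$ here; what is needed is an anti-concentration (small-ball) estimate: $\mathbb P\bigl(\Delta_\ell^k Z\in\mathfrak J_{h,\varepsilon}\bigr)\lesssim h^{\alpha-1/\beta'}$, which after the counting lemma and Borel--Cantelli yields the decreasing branch $1+1/\beta'-\alpha$. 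The paper obtains this by proving (Lemma \ref{t281}) that a.s.\ every dyadic interval contains a ``$(\mathbf K,\delta_0)$-regular'' loss of some source with $\lambda_j^{\theta}\approx h^{-1/\beta'}$, whose post-loss value $Y=g_X(\tau)$ has a density bounded by $2K_1^{\theta-1}/\delta_0$ (this uses $g_1'\ge\delta_0$ off a small set together with the exponential-tail stationary law of the Markov chain $\Phi$ -- another ingredient absent from your plan), and then a convolution bound \eqref{t294}--\eqref{z57} against the independent remainder $Z_{\ne j}$. Your correct observation that on $R_2^{u}$ the in-lap growth cancels in $\Delta$ but not in $\mathrm O$ is the reason the increment bound is restricted to $R_2^{\ell}$, but without the existence-of-a-good-loss lemma and the density/convolution argument there is no route from your per-source deterministic estimates to the claimed upper bounds.
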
}
The rest of the paper is devoted to the proof of Theorem \ref{o79}. First we give a heuristic argument to show why it is natural to expect that at least the first part of this theorem holds.

\subsection{Heuristic proof for Part (a) of Theorem \ref{o79}}

\clrgreen{Fix a $\beta\in[1,2]$ and choose  $\alpha$ such that
\begin{equation}\label{z49}
  \alpha\left(\beta-1+1/\theta\right)<1,
\end{equation}
that is we are on the region $R_1$. Fix  a small $h=2^{-\ell}>0$. We would like to compute the number of $2^{-\ell }$-mesh intervals on which the magnitude of the increment of $Z$ is approximately $h^\alpha$. To do so, we
consider the  $L$-block $\left(L^{k-1},L^k\right)$ which `corresponds to' $h$ and $\alpha$. That is we define $k$ such that
\begin{equation}\label{z101}
  L^{k-1}<h^{-\alpha} \leq L^k.
\end{equation}
We say that an index $j$ is $\alpha$-good if
$$
\lambda_j^{\theta}\in(L^{k-1},L^k).
$$
We will see that, roughly speaking, a typical jump for the process $Z_j$ is of magnitude $\lambda_{j}^{-\theta}$ and these jumps happen roughly once in a time interval of length $1/\lambda_{j}$. We will prove that
\begin{itemize}
  \item if $\lambda_j$ is greater than the $\alpha$-good parameters, then although the process $Z_j(t)$ jumps frequently, these jumps are too small to influence the outcome as far as we count increments of magnitude $h^\alpha$.
  \item On the other hand, if $\lambda_j$ is smaller than the $\alpha$-good parameters,  then
  although the jumps of the process $Z_j(t)$ are big but these jumps happen rarely, so their effect is not significant for counting the jumps of  magnitude $h^\alpha$.
\end{itemize}
So, we can focus on the $\alpha$-good indices $j$. Observe that
\begin{itemize}
  \item Each process $Z_j$ with $\alpha$-good $j$ jumps approximately $\lambda_j\sim h^{-\alpha/\theta}$ times
  \item There are approximately $h^{-\alpha(\beta_{\theta}-1)}$ good indices $j$ for $\alpha$. Namely,
  By the definition of the Blumetal Getoor index $\beta_\theta$ and the regularity of $\left\{\lambda_j\right\}_{j=1}^{\infty }$  (see Section \ref{z102}) we have approximately
  $L^{k(\beta_\theta-1)}$ indices $j$ such that $\lambda_j^{-\theta}\in \left(L^{k-1},L^k\right)$. That is the number of good indices is approximately $L^{k(\beta_\theta-1)}$. However, by \eqref{z101} $L^k\sim h^{-\alpha}$. That is we have approximately $h^{-\alpha(\beta_{\theta}-1)}$ good indices.
\end{itemize}
So, the
 combined effect of these two points above suggests that there should be approximately
 \begin{equation}\label{z50}
   h^{-\alpha/\theta} \cdot h^{-\alpha(\beta_\theta-1)}=h^{-\alpha(\beta_\theta-1+1/\theta)}
 \end{equation}
$2^{-\ell }$-mesh intervals on which the magnitude of the  increments is $h^\alpha$. Here we
used that for different indices $j$ which are $\alpha$-good the majority of the corresponding mesh-intervals are different and well-separated from each other. This follows from the assumption \eqref{z49}. Observe that
 by \eqref{t212} and by $h=2^{-\ell }$,
 formula \eqref{z50} is actually part (a) of Theorem \ref{o79}.}

\section{Stationary measure of the Markov Chain associated to $Z_j$}
\label{sec:stationarity}
\cred{Now we turn to the technical details of the proofs.}
 Fix a $j\ge 1$. It is easy to see that $Z_j(t)$ is not a continuous time Markov process if $\theta>1$, that is, $g$ is not an affine function. Therefore we consider $\Phi_{k}^{(j)}$, the values of $Z_j$ just before the the $k$-th loss (see Figure \ref{z2}). Then for every $j\ge 1$, $\Phi^{j}:=\left(\Phi_{k}^{(j)}\right)_{k=1}^{\infty }$
 is a discrete time continuous state space  Markov chain. As  the  Meyn, Tweeedie book \cite{meyn2009markov} is a major reference book of this field  we use its terminology in this paper.
  Due to the self-affine property, it is enough to study the Markov chain $\Phi$ which corresponds  to a reference process $Z_0$ that is defined exactly as $Z_j$, for $\lambda:=\lambda_1:=1$.

\subsection{Geometric ergodicity of  $\Phi$}

In this section we  study the scalar  non-linear discrete time, continuous state space Markov chain model (following  the terminology in \cite{meyn2009markov}):
\begin{equation}\label{t130}
 \Phi _k=g_{\Phi_{k-1}}(T_k-T_{k-1}),
\end{equation}
where the function
    $g_x:[0,\infty )\to \mathbb{R}^+$ is
     defined for all $x> 0$ and
     satisfies  the assumptions {\bf(A1)-(A3)} of Definition  \ref{z16} and
      $\left(T_k\right)_{k\ge 1}$ is  $\mathrm{Poisson}(1)$ point process.

     Let $P$ be the probability kernel of the time-homogeneous Markov Chain $\Phi=\left(\Phi_k\right)_{k\ge 1}$. That is, \structure{for a set $A\subset \mathbb R^+$,}
        \begin{equation}\label{t132}
          P(x,A):=\mathbb{P}\left(\Phi_{k+1}\in A|\Phi_k=x\right).
          \end{equation}
We prove
\begin{theorem}\label{t131}\structure{For the Markov chain $\Phi$ described above}
\begin{description}     \item[(a)] there exists a unique stationary state $\pi$.
    \item[(b)] $\int V(x)d\pi(x)<\infty $ for
    \begin{equation}\label{t134}
      V(x):=\exp \left\{\structure{\tilde c} \cdot x^{1/\theta}\right\},
    \end{equation}
   where \structure{$\tilde c$} is positive constant defined in \eqref{t164}.
    \item[(c)] (Geometric Ergodicity) There exists constants $r>1$ and $R<\infty $ such that
\begin{equation}\label{t132}
  \sum\limits_{n}
  r^n\|P^n(x,\cdot)-\pi\|_V
   \leq
   R\cdot V(x),
\end{equation}
        where by definition for a non-negative function $f$, and the $f$-norm of a measure $\nu$ is defined as
        $
        \|\nu\|_f:=\sup\limits_{h:|h| \leq f}|\nu(h)|
        $, $|h| \leq f$ is meant pointwise and $\nu(f)$ is the integral of $f$ \structure{with respect to the measure $\nu$}.
  \end{description}
\end{theorem}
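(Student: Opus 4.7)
The plan is to apply the $V$-uniform ergodicity machinery of Meyn and Tweedie (e.g.\ Theorem~15.0.1 of \cite{meyn2009markov}), which delivers parts (a), (b), (c) in one package once we verify four ingredients: $\psi$-irreducibility, aperiodicity, a geometric Foster--Lyapunov drift
\[
PV(x)\le \lambda V(x)+b\,\mathbf 1_C(x),\qquad \lambda<1,\ b<\infty,
\]
and smallness of the set $C$. Using \textbf{(A1)} with $r=x^{1/\theta}$ yields the explicit representation $\Phi_k=\Phi_{k-1}\cdot g_1\bigl(\tau_k/\Phi_{k-1}^{1/\theta}\bigr)$ with $\tau_k\sim\mathrm{Exp}(1)$. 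Since $\tau_k$ has a positive density on $(0,\infty)$ and $g_1$ is strictly increasing on $[0,\infty)$ by \textbf{(A3)}, the one-step kernel from $x$ has an absolutely continuous component supported on $(x\eta,\infty)$; iterating and using $\eta\in(0,1)$ shows that every non-empty open subset of $(0,\infty)$ is reached in finitely many steps from any starting point, so $\Phi$ is $\psi$-irreducible with respect to Lebesgue measure, and aperiodicity follows from the absolute continuity of the kernel.

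The heart of the argument is the drift estimate. Setting $h(s):=g_1(s)^{1/\theta}$ and exploiting self-affinity,
\[
V(g_x(\tau))=\exp\!\bigl\{\tilde c\,x^{1/\theta}\,h(\tau/x^{1/\theta})\bigr\}.
\]
Assumption \textbf{(A2a)} asserts exactly that $h$ is Lipschitz with constant $\psi$, so $h(\tau/x^{1/\theta})\le \eta^{1/\theta}+\psi\,\tau/x^{1/\theta}$, whence
\[
V(g_x(\tau))\le V(x)^{\eta^{1/\theta}}\cdot e^{\tilde c\psi\tau}.
\]
Choosing $\tilde c$ small enough that $\tilde c\psi<1$ (this is the calibration fixing $\tilde c$ in \eqref{t164}) and taking expectation against $\tau\sim\mathrm{Exp}(1)$ gives
\[
PV(x)\le \frac{1}{1-\tilde c\psi}\,V(x)^{\eta^{1/\theta}}.
\]
Because $\eta^{1/\theta}<1$, one has $V(x)^{\eta^{1/\theta}}/V(x)=\exp\!\bigl\{-\tilde c(1-\eta^{1/\theta})x^{1/\theta}\bigr\}\to 0$ as $x\to\infty$, so for any prescribed $\lambda\in(0,1)$ there is $M<\infty$ with $PV(x)\le \lambda V(x)$ on $(M,\infty)$; on the bounded set $C:=[0,M]$ the quantity $PV$ is bounded by some $b<\infty$ by continuity of $(x,t)\mapsto V(g_x(t))$ and finiteness of the Exp$(1)$ moment. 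This is the required geometric drift.

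It remains to certify that $C=[0,M]$ is a small (and hence petite) set. For $x\in C$ the distribution of $\Phi_1=g_x(\tau)=x\cdot g_1(\tau/x^{1/\theta})$ has an explicit density obtained by change of variables from the $\mathrm{Exp}(1)$ density of $\tau$ using the strict monotonicity of $g_1$ guaranteed by \textbf{(A3)}; a compactness argument on $x\in[0,M]$ produces a uniform lower minorant of the form $\varepsilon\,\mathbf 1_{[a,b]}(y)\,dy$ on a fixed interval $[a,b]\subset(0,\infty)$, verifying the minorization condition. With $\psi$-irreducibility, aperiodicity, the drift inequality, and the small set in hand, Theorem~15.0.1 of \cite{meyn2009markov} yields simultaneously (a), (b), and (c). The main obstacle is the quantitative tuning of $\tilde c$: it must be small enough to guarantee $\tilde c\psi<1$ (so that $\mathbb E[e^{\tilde c\psi\tau}]<\infty$), yet the resulting $V$ must still dominate the scaling factor $\lambda_j^{\theta}$ that will appear when we lift the result from the reference chain $\Phi$ to the processes $Z_j$ via \eqref{t171} in later sections; this balance is precisely what \eqref{t164} encodes.
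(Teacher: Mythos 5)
Your proposal is correct in substance and rests on the same scaffolding as the paper: verify $\psi$-irreducibility, aperiodicity, a Foster--Lyapunov drift for $V(x)=\exp(\tilde c\,x^{1/\theta})$, and a small/petite set, then invoke the Meyn--Tweedie geometric-ergodicity theorems (the paper uses Theorems 15.0.2 and 14.0.1 where you cite 15.0.1; these are interchangeable here). Where you genuinely diverge is in the two technical verifications. For the drift, the paper (Proposition \ref{t136}) splits the integral at $u=\xi x^{1/\theta}$, treats the two ranges with $g_x(u)^{1/\theta}\le \eta_1^{1/\theta}x^{1/\theta}$ and with the growth bound \eqref{t145}, and fixes $\tilde c=1/(2c_2^{1/\theta})$ in \eqref{t164}; you instead use \textbf{(A2a)} globally to get the one-line bound $g_x(\tau)^{1/\theta}\le \eta^{1/\theta}x^{1/\theta}+\psi\tau$, hence $PV(x)\le (1-\tilde c\psi)^{-1}V(x)^{\eta^{1/\theta}}$, which is cleaner and immediately yields (V4). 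For irreducibility, you argue directly from the explicit transition density supported on $(\eta x,\infty)$ and the fact that $\eta^n x\to 0$, whereas the paper goes through forward accessibility, the $T$-chain property and \cite[Theorem 6.0.1]{meyn2009markov}; your route is more elementary and equally valid since the kernel has an a.e.\ positive density on its support. Two points deserve tightening. First, the theorem statement ties $\tilde c$ to \eqref{t164}, while your argument only requires $\tilde c\psi<1$; you should note that the paper's choice does satisfy this, because the derivation of \eqref{t145} gives $c_2^{1/\theta}\ge\psi$ (indeed one may take $c_2^{1/\theta}=g_1(\xi)^{1/\theta}/\xi+\psi$), so $\tilde c\psi\le 1/2$. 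Second, ``aperiodicity follows from absolute continuity'' is too quick as stated: the clean fix is exactly your minorization, choosing the minorant interval $[a,b]$ with $\eta M<a<b\le M$ so that it lies inside the small set $C$ and $\nu(C)>0$, which gives strong aperiodicity just as the paper does in Lemma \ref{t155} with $A=[\eta,1]$; likewise the ``compactness'' step for the uniform minorant is better replaced by the explicit lower bound of the paper's function $\mathfrak h$ (via \eqref{t146} and \eqref{t238}), since continuity of the kernel down to $x=0$ is not needed and is slightly delicate. With these two repairs your argument is a complete proof of (a)--(c).
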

We prove Theorem \ref{t131} in Section \ref{t269}.
Part (b) of Theorem \ref{t131} immediately implies the following corollary:
\begin{corollary}\label{t165}
There exists a constant $K_3$ such that
  for every $k \geq 1$ we have
  \begin{equation*}\label{t166}
    \pi\left([k^\theta,(k+1)^\theta]\right) \leq K_3  \cdot
    \e{-\structure{\tilde c} \cdot k},
  \end{equation*}
  \structure{where $\tilde c$ is the constant in the exponent of $V(x)$, defined in \eqref{t164} below.}
\end{corollary}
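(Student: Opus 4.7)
The plan is to read off the bound directly from Theorem~\ref{t131}(b) by a Markov-type inequality applied to the function $V(x) = \exp\{\tilde c \cdot x^{1/\theta}\}$.

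First, I would set $K_3 := \int V(x) \, d\pi(x)$, which is finite by Theorem~\ref{t131}(b). Next, observe that the function $V$ is monotone increasing in $x$, so on the interval $[k^\theta, (k+1)^\theta]$ we have the pointwise lower bound
\begin{equation*}
V(x) = \exp\{\tilde c \cdot x^{1/\theta}\} \geq \exp\{\tilde c \cdot k\}
\qquad \text{for every } x \in [k^\theta,(k+1)^\theta].
\end{equation*}

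Integrating this inequality against $\pi$ restricted to the interval and then enlarging the domain to all of $\mathbb{R}^+$ gives
\begin{equation*}
e^{\tilde c \cdot k} \cdot \pi\bigl([k^\theta,(k+1)^\theta]\bigr)
\leq \int_{[k^\theta,(k+1)^\theta]} V(x) \, d\pi(x)
\leq \int V(x) \, d\pi(x) = K_3,
\end{equation*}
and dividing by $e^{\tilde c \cdot k}$ yields the claimed bound. So the corollary is really just Chebyshev's inequality for the $V$-moment of $\pi$, with no genuine obstacle beyond knowing that $V$ is $\pi$-integrable; this in turn is exactly the content of Theorem~\ref{t131}(b), whose non-trivial proof (via a drift condition of Foster--Lyapunov type, deferred to Section~\ref{t269}) is where all the actual work sits. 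In particular, no further use of geometric ergodicity (Theorem~\ref{t131}(c)) is needed for this corollary.
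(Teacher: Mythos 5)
Your proof is correct and is exactly the argument the paper intends: the authors state that the corollary follows immediately from Theorem~\ref{t131}(b), and the Markov/Chebyshev step you spell out (using $V(x)\ge e^{\tilde c\, k}$ on $[k^\theta,(k+1)^\theta]$ and the finiteness of $\int V\,d\pi$) is precisely that omitted routine step. No difference in approach and no gap.
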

In the rest of this section our aim is to prove Theorem~\ref{t131}. The assertions of Theorem~\ref{t131} follow from   two theorems (\cite[Theorem 14.0.1]{meyn2009markov},
\cite[Theorem 15.0.2]{meyn2009markov}) from Meyn, Tweedie book. The conditions of these two theorems are as follows:
\begin{itemize}
  \item[(i)]  Drift Condition (Proposition \ref{t136}),
  \item[(ii)] $\Phi$ is $\psi$-irreducible (Lemma \ref{t158}),
  \item[(iii)] $\Phi$ is recurrent (Proposition \ref{t156}),
  \item[(iv)] $\Phi$ is strongly aperiodic, (Lemma \ref{t155}),
\end{itemize}
for the terminology see \cite{meyn2009markov}.
In Section~\ref{z35}, as a preparation for the proof of Theorem~\ref{t131}, we study the self-affine family $\{g_x(t)\}$.
In Section~\ref{t269} we verify \structure{(i)} above. In Section~\ref{z36} we prove \structure{(ii)-(iv)} above and as a consequence of these we also prove Theorem~\ref{t131}. Finally in Section \ref{z37} we describe some properties of the density of the kernel.

\subsection{Properties of $g_x(t)$}\label{z35}

 \begin{figure}[H]
  \begin{center}
  \includegraphics[width=9cm]{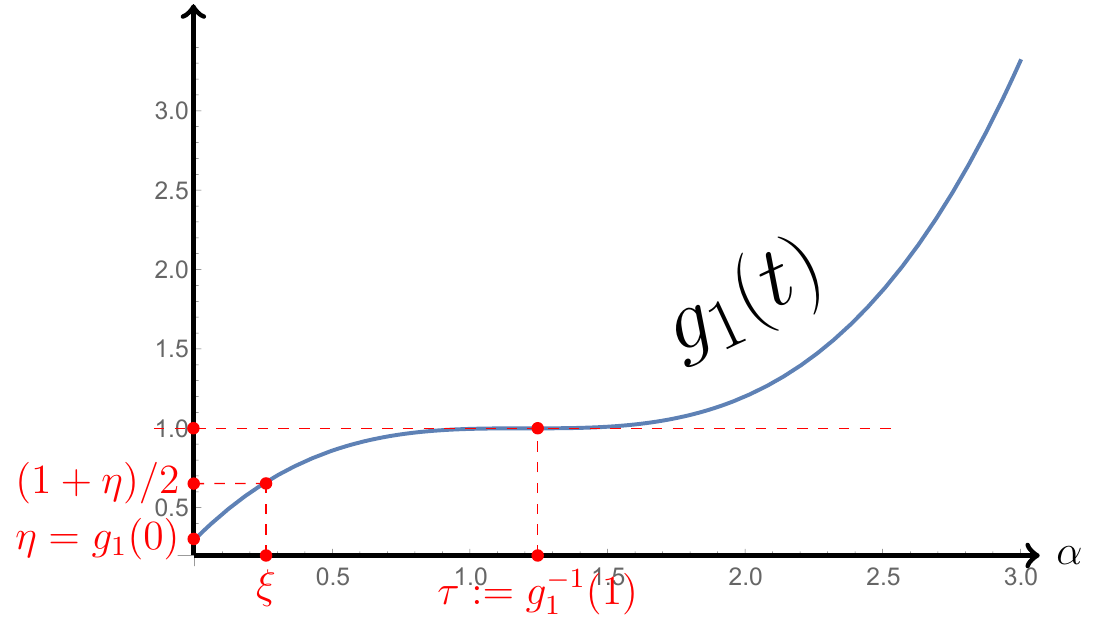}\\
  \caption{$g_1(t)$ in the CUBIC TCP case.}
  \label{z32}
  \end{center}
\end{figure}

Here we frequently use the following notation  (see Figure \ref{z32}):
\begin{equation*}\label{z33}
  \eta:=g_1(0) \mbox{ and } \xi:=g_{1}^{-1}\left(\frac{1+\eta}{2}\right) \mbox{ and }
  \tau:=g_{1}^{-1}(1),
\end{equation*}
where $g_{1}^{-1}\left(\cdot\right)$ is the inverse function of $g_1$.
We will use the following properties of $g_x(t)$:

\begin{remark}\label{t144}
\structure{Here we mention a few properties of the function $g_x(t)$.}
 \begin{enumerate}
 \item
   Substituting $r=x^{1/\theta}$  in {\bf(A1)} \structure{in Definition \ref{z16}} above, we obtain
     \begin{equation}\label{t123}
    g_x(t)=x \cdot g_1\left(\frac{t}{x^{1/\theta}}\right)
    \mbox{ and }
    g_{x}^{-1}(t)=x^{1/\theta}g_{1}^{-1}\left(\frac{t}{x}
    \right).  \end{equation}
  Hence
    \begin{equation}\label{z56}
  g'_x(t)=x^{1-1/\theta}g'_1\left(\frac{t}{x^{1/\theta}}\right).
    \end{equation}
Combining \structure{the first equation in} \eqref{t123} with \eqref{t143} we obtain
  \begin{equation}\label{z48}
    g_x(t) \geq c_1 \cdot t^\theta\qquad \forall x>0,\, t>0.
  \end{equation}
  \item \structure{With $\psi$ as in in Definition \ref{z16}, {\bf(A2a)},
the assumption {\bf(A2a)} implies that }
\begin{equation}\label{t167}
  g'_1(t)<\psi\theta (g_1(t))^{1-1/\theta}.
\end{equation}
Combining this inequality first with \eqref{z56} then with  \eqref{t123}  yields that for $x>0$
\begin{equation}\label{t238}
\structure{g_x'(t)} \leq \psi \cdot \theta \cdot g_x(t)^{1-1/\theta}.
\end{equation}
    \item Using (1) and (2) one can easily see that if $g_1(t)$ satisfies the assumptions {\bf(a)-(c)} of Example \ref{z15} then family
    $\left\{g_x(t)\right\}_{x>0}$ defined by \eqref{t123}  satisfies the assumptions {\bf(A1)-(A3)} of Definition \ref{z16}.

 \item It is easy to see that
$
   g_x\left(\tau  \cdot x^{1/\theta}\right)=x.
 $
 \item \emph{Forward accessibility of the associated control system. }
    Observe that
    \begin{equation*}\label{t126}
      g_x(0)=\eta\cdot x \mbox{ that is }
      g_x[0,\infty )=[\eta \cdot x,\infty ].
    \end{equation*}
    This implies that for the recursively defined functions
    $$g_x^{(n)}(t_1,  \dots ,t_n):=
    g_{g_x^{(n-1)}(t_1,  \dots ,t_{n-1})}(t_n)
    $$
    we have $g_{n}^{(x)}(\underbrace{0, \dots ,0}_n)=\eta^n \cdot x$. That is
    \begin{equation*}\label{t129}
      \bigcup_{n=1}^{\infty }
      g_{x}^{(n)}\left([0,\infty )^n\right)
      =
      \left(0,\infty \right).
    \end{equation*}
     We say that $g_x^{(n)}(t_1,  \dots ,t_n)$ is  \emph{the associated control system} of the Markov chain
     $\structure{\Phi_k}$ (defined in \eqref{t130})
driven by $g_x(t)$. By definition (see \cite[p. 141]{meyn2009markov}) this means that the associated control system is \emph{forward accessible}.
  \end{enumerate}

  \end{remark}

\structure{The next fact is simple but is important, so we list it separately.}
 \begin{fact}
  \structure{Suppose that $\{g_x(t)\}_{x\ge 0}$ satisfies the assumptions in Definition \ref{z16}. Then}
 there exists $c_2>0$ such that
\begin{equation}\label{t145}
  g_1(t) \leq c_2t^\theta, \quad \mbox{ for }
  t \geq \xi.
\end{equation}
%
%
\end{fact}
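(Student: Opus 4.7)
The plan is to integrate the derivative bound from assumption \textbf{(A2a)} and then take the $\theta$-th power. By \textbf{(A2a)}, the map $t \mapsto g_1(t)^{1/\theta}$ has derivative less than $\psi$ on $[0,\infty)$, so by the fundamental theorem of calculus applied on $[0,t]$,
\begin{equation*}
g_1(t)^{1/\theta} \;\le\; g_1(0)^{1/\theta} + \psi t \;=\; \eta^{1/\theta} + \psi t.
\end{equation*}
Raising both sides to the $\theta$-th power gives $g_1(t) \le (\eta^{1/\theta}+\psi t)^\theta$ for every $t\ge 0$.

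Next I would convert this into a pure power of $t$ on the range $t\ge \xi$. Since $\xi>0$ is fixed, for any $t\ge \xi$ one has $\eta^{1/\theta} = (\eta^{1/\theta}/\xi)\cdot \xi \le (\eta^{1/\theta}/\xi)\, t$, and so
\begin{equation*}
g_1(t) \;\le\; \Bigl(\tfrac{\eta^{1/\theta}}{\xi}+\psi\Bigr)^{\!\theta} t^\theta \qquad (t\ge \xi).
\end{equation*}
Therefore one can take $c_2 := (\eta^{1/\theta}/\xi+\psi)^\theta$ and the claimed inequality follows.

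There is no real obstacle here; the estimate is a direct consequence of \textbf{(A2a)}. The only point worth flagging is why the restriction $t\ge \xi$ is needed in the statement: the inequality $g_1(t)\le c_2 t^\theta$ cannot hold as $t\to 0^+$, because $g_1(0)=\eta>0$ while $t^\theta\to 0$. Hence $t$ must be bounded away from zero, and the specific threshold $\xi$ (where $g_1$ reaches $(1+\eta)/2$) is simply a convenient positive quantity already available from the preceding notation.
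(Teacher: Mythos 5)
Your proof is correct and follows essentially the same route as the paper: both exploit the bound on $\frac{d}{dt}\bigl(g_1(t)^{1/\theta}\bigr)$ from \textbf{(A2a)} to get an affine upper bound on $g_1^{1/\theta}(t)$ (the paper anchors the Lagrange/mean value estimate at $\xi$, you at $0$, which is immaterial) and then absorb the constant into $c_2 t^\theta$ using $t\ge\xi>0$. Your explicit choice $c_2=(\eta^{1/\theta}/\xi+\psi)^\theta$ and the remark on why the restriction $t\ge\xi$ is necessary simply spell out what the paper leaves implicit.
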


\begin{proof}
By the assumption \structure{\bf{(A2a)} in Definition \ref{z16}}, Lagrange Theorem implies that $$
g_{1}^{1/\theta}(t)\leq g_{1}^{1/\theta}(\xi) +
\psi \cdot (t-\xi), \quad
  t>\xi.
$$
\structure{This implies that \eqref{t145} holds}.
\end{proof}

\begin{fact}\label{fact2} We need the following properties of $g_{x}^{-1}$, the inverse function of $g_x$.
\begin{description}
  \item[(a)] There exists a constant $c_3>0$ such that
  \begin{equation}\label{t147}
    (g_{x}^{-1})'(t) \geq c_3
    t^{-(1-1/\theta) }, \quad \forall x>0,\forall  t>x\eta.
  \end{equation}
  \item[(b)] \structure{With $c_1$ as in assumption {\bf(A2b)} in Definition \ref{z16},}
  \begin{equation}\label{t146}
  g_{x}^{-1}(t) \leq \left(\frac{t}{c_1}\right)^{1/\theta}
  \forall  x>0, \forall
  t> x\eta.
\end{equation}
\end{description}
\end{fact}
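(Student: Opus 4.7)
The plan is that both parts of the Fact follow almost immediately from results already collected in Remark~\ref{t144}, namely the lower bound \eqref{z48} and the upper bound \eqref{t238} on $g_x$ and $g_x'$, respectively. Since $g_x$ is strictly increasing (by assumption {\bf(A3)} and \eqref{z56}) on $[0,\infty)$ and $g_x(0)=\eta x$, the inverse $g_x^{-1}$ is well defined on the domain $t>\eta x$, which is precisely the range of $t$ in both statements.

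For part~(b), I would simply substitute $s:=g_x^{-1}(t)$ into \eqref{z48}. This gives
\[
t \;=\; g_x(s)\;\ge\; c_1 s^\theta \;=\; c_1\bigl(g_x^{-1}(t)\bigr)^\theta,
\]
from which rearranging yields $g_x^{-1}(t)\le (t/c_1)^{1/\theta}$.

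For part~(a), the key identity is the standard inverse-function derivative rule
\[
\bigl(g_x^{-1}\bigr)'(t)\;=\;\frac{1}{g_x'\bigl(g_x^{-1}(t)\bigr)}.
\]
Applying \eqref{t238} at the point $s=g_x^{-1}(t)$ and using $g_x(g_x^{-1}(t))=t$ gives
\[
g_x'\bigl(g_x^{-1}(t)\bigr)\;\le\;\psi\,\theta\,g_x\bigl(g_x^{-1}(t)\bigr)^{1-1/\theta}
\;=\;\psi\,\theta\,t^{\,1-1/\theta},
\]
and inverting the inequality produces $(g_x^{-1})'(t)\ge c_3\, t^{-(1-1/\theta)}$ with the explicit constant $c_3:=1/(\psi\theta)$, independent of $x$.

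There is no real obstacle here: the whole point of the Fact is to make an elementary repackaging of the growth bounds {\bf(A2a)}--{\bf(A2b)} in a form suited for later use (presumably in change-of-variable arguments that control the density of the transition kernel, and in the drift computation underlying Theorem~\ref{t131}). The only mild subtlety is to note that both estimates need the constants $c_1,\psi$ to be uniform in $x$, which follows from the self-affine identity \eqref{t123}: the bounds on $g_1$ and $g_1'$ given by {\bf(A2a)}--{\bf(A2b)} transfer to $g_x$ with the $x$-dependence absorbed exactly through the factors $t^\theta$ and $t^{1-1/\theta}$ that appear in the statements.
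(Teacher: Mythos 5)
Your proposal is correct and follows essentially the same route as the paper: part (a) via the inverse-function derivative rule together with the bound $g_x'(t)\le\psi\theta\,g_x(t)^{1-1/\theta}$ (the paper unwinds this through the scaling \eqref{t123} and \eqref{t167}, you cite the already-scaled \eqref{t238}, yielding the same $c_3=(\psi\theta)^{-1}$), and part (b) via assumption \textbf{(A2b)} plus the self-affine scaling, whether applied to $g_x$ as in \eqref{z48} or directly to $g_x^{-1}$ as in the paper. No gaps.
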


\begin{proof} Part {\bf(a)}:
Using that $(g_x^{-1})'(t)=1/g'_x(g_x^{-1}(t))$,
  it follows from \eqref{t123} that for every $x,t>0$ we have
 $$
  g'_{x}\left(g_{x}^{-1}(t)\right) =x^{1-1/\theta}
  g'_1\left(g_{1}^{-1}\left(t/x\right)\right)
$$
If we apply \eqref{t167} with  \structure{$t$ replaced by $g_{1}^{-1}\left(t/x\right)$} then we get
the assertion of the lemma with $c_3:=(\psi\theta)^{-1}$.

Part {\bf(b)}:
 Using assumption \structure{{\bf(A2b)} in Definition \ref{z16}} and second part of \eqref{t123} we obtain that \eqref{t146} holds.
\end{proof}

\subsection{The verification of the drift condition}\label{t269}

 In this section we prove Proposition~\ref{t136} that implies that the so-called Drift Condition holds. \structure{This is the first step towards proving Theorem \ref{t131}, see point (i) in the argument below Corollary \ref{t165}. }

We frequently use the drift operator $\Delta $ that is defined  for any measurable function $f:(0,\infty )\to(0,\infty )$ by
\begin{equation}\label{t133}
  \Delta f(x):=\int P(x,dy)f(y)-f(x)=
  \int\limits_{y=0}^{\infty }
  p(x,y)f(y)dy-f(x)
  ,\quad x>0,
\end{equation}
where
$p(x,y)$ is  the density of the kernel $P(x,dy)$. \structure{For the Markov Chain $\Phi_k$, by \eqref{t130}, elementary calculation using the density of the exponential distribution yields that this density kernel is given by}
\begin{equation}\label{t137}
  p(x,y):=
  \dfrac{\exp
  \left(
  -g_{x}^{-1}(y)
  \right)}
  {g'_x\left(g_{x}^{-1}(y)\right)}.
\end{equation}

Now we state and prove that the drift condition holds.
\begin{proposition}\label{t136} \structure{Recall the definition of $V(x)$ from \eqref{t134}.}
 There exists a $K>0$ such that
 \begin{equation*}\label{t135}
   \Delta V(x)<-\frac{V(x)}{2}+2 \cdot \ind_{(0,K]},\quad \forall x>0,
 \end{equation*}
 where $\ind_{(0,K]}$ is the indicator function of the interval $(0,K]$.
\end{proposition}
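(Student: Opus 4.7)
The strategy is to compute $\Delta V(x)$ directly from the density \eqref{t137} and then use the self-affine property (A1) to collapse everything to a single one-dimensional Laplace-type integral whose decay is controlled by the growth bound (A2a).

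First I would carry out the change of variables $t = g_x^{-1}(y)$ in $\int p(x,y)V(y)\,dy$. This cancels the awkward Jacobian $1/g_x'(g_x^{-1}(y))$ against $dy$ and leaves the clean expression
$$\int p(x,y)V(y)\,dy \;=\; \int_0^\infty e^{-t}\, V\!\bigl(g_x(t)\bigr)\,dt.$$
Then, using the self-affine relation $g_x(t) = x\, g_1(t/x^{1/\theta})$ from \eqref{t123} and substituting $s = t/x^{1/\theta}$, this becomes
$$x^{1/\theta} \int_0^\infty \exp\!\Bigl(x^{1/\theta}\bigl[\tilde c\, g_1(s)^{1/\theta} - s\bigr]\Bigr)\,ds.$$
The problem is thus reduced to estimating the exponent $\tilde c\, g_1(s)^{1/\theta} - s$, whose value at $s=0$ is $\tilde c\, \eta^{1/\theta}$ since $g_1(0)=\eta$.

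Next I would use (A2a): integrating $(g_1^{1/\theta})'\le\psi$ from $0$ to $s$ gives $g_1(s)^{1/\theta} \le \eta^{1/\theta} + \psi s$, so
$$\tilde c\, g_1(s)^{1/\theta} - s \;\le\; \tilde c\, \eta^{1/\theta} - (1-\tilde c\psi)\, s.$$
Provided $\tilde c$ is chosen small enough that $\delta := 1 - \tilde c\psi > 0$ (this is the role of the precise value in \eqref{t164}), the inner integral collapses to a geometric-series–like bound
$$\int p(x,y) V(y)\,dy \;\le\; \frac{1}{\delta}\, \exp\!\bigl(\tilde c\, \eta^{1/\theta}\, x^{1/\theta}\bigr).$$
Dividing by $V(x) = \exp(\tilde c\, x^{1/\theta})$ and using $\eta \in (0,1)$, so $\eta^{1/\theta} - 1 < 0$, yields
$$\frac{\Delta V(x)}{V(x)} \;\le\; \frac{1}{\delta}\, \exp\!\bigl(\tilde c\,(\eta^{1/\theta}-1)\, x^{1/\theta}\bigr) - 1,$$
which tends to $-1$ as $x\to\infty$.

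Finally, from this asymptotic one picks $K>0$ large enough that the right-hand side is $\le -\tfrac12$ for all $x>K$, giving $\Delta V(x)\le -V(x)/2$ on $(K,\infty)$. On the compact interval $(0,K]$ both $V(x)$ and the same integral bound are uniformly bounded, so $\Delta V(x)+V(x)/2$ is controlled by a constant there; enlarging $K$ if necessary absorbs this into the indicator term on the right-hand side of the asserted inequality.

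The main technical issue is ensuring that $\tilde c$ defined in \eqref{t164} indeed satisfies $\tilde c\psi<1$ (so that the linear bound from (A2a) gives genuine exponential decay in $s$), and that the same $\tilde c$ is compatible with what is needed later for Corollary~\ref{t165} and for integrability of $V$ against $\pi$ in Theorem~\ref{t131}(b). Beyond that, the proof is an essentially routine two-step reduction: self-affine rescaling turns the drift integral into a single Laplace transform, and (A2a) turns it into a convergent exponential integral.
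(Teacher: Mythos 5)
Your opening reduction, the change of variables $t=g_x^{-1}(y)$ turning $\int p(x,y)V(y)\,dy$ into $\int_0^\infty e^{-t}V(g_x(t))\,dt$, is exactly the paper's first step. From there you diverge in the bookkeeping: the paper splits this integral at $u=r:=\xi x^{1/\theta}$, bounds the near part by $V(x)\exp\bigl(\tilde c x^{1/\theta}(\eta_1^{1/\theta}-1)\bigr)$ with $\eta_1=g_1(\xi)<1$ (so the near part minus $V(x)$ is always $\le 0$), and bounds the tail by $\int_r^\infty e^{(\tilde c\,c_2^{1/\theta}-1)u}\,du\le 2$ using \eqref{t145} and the choice of $\tilde c$ in \eqref{t164}; you instead rescale $s=t/x^{1/\theta}$ and use the single global bound $g_1(s)^{1/\theta}\le \eta^{1/\theta}+\psi s$ from (A2a). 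Both routes work, and the technical issue you flag is not an obstruction: the constant $c_2$ in \eqref{t145} comes precisely from the Lagrange estimate $g_1^{1/\theta}(t)\le g_1^{1/\theta}(\xi)+\psi(t-\xi)$, so one may (and should, without loss of generality) take $c_2^{1/\theta}\ge \psi$ --- enlarging $c_2$ only makes $\tilde c$ smaller, which harms nothing in Theorem \ref{t131}(b) or Corollary \ref{t165} --- and then $\tilde c\,\psi\le 1/2$, so your $\delta=1-\tilde c\psi\ge 1/2$. That remark should be stated explicitly in your write-up.

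One genuine (though minor) inaccuracy: your bound on the small-$x$ regime is $\tfrac{1}{\delta}\exp\bigl(\tilde c\,\eta^{1/\theta}x^{1/\theta}\bigr)$, which \emph{grows} with $x$, so ``enlarging $K$ if necessary'' does not absorb anything into the stated additive constant $2$; it makes the constant on $(0,K]$ larger. What your argument actually yields is $\Delta V(x)\le -\tfrac12 V(x)+b\,\ind_{(0,K]}$ for some finite $b=b(K)$, which is all that the Meyn--Tweedie drift condition needs downstream, so the substance is unaffected (the paper's own two-regime split is what produces an $x$-uniform additive constant, since its near part never exceeds $V(x)$ and its tail is $\le 2$ uniformly). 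Either state the conclusion with a generic constant $b$, or adopt the paper's split at $\xi x^{1/\theta}$ if you want the inequality in the literal form of Proposition \ref{t136}.
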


\begin{proof}[Proof of Proposition \ref{t136}]
Let $r:=\xi \cdot x^{1/\theta}$. \structure{Using the formula for the density kernel \eqref{t137} and the definition of the operator $\Delta$ in \eqref{t133}, we calculate}

\begin{multline}\label{t160}
 \Delta V(x)=\int\limits_{y=\eta \cdot x}^{\infty }
 V(y)\frac{\exp\left[-g_{x}^{-1}(y)\right]}
 {g'_x\left(g_{x}^{-1}(y)\right)}dy
   \\
 =\underbrace{\int\limits_{u=0}^{r}V
 \left(g_x(u)\right)\e{-u}du}_{I_1}
 +  \underbrace{\int\limits_{u=r}^{\infty }  V\left(g_x(u)\right)\e{-u}du}_{I_2}
 -V(x),
\end{multline}
where we applied the substitution $u=g_{x}^{-1}(y)$.
First we estimate
\begin{equation}\label{t161}
  I_1-V(x)  \leq
\int\limits_{u=0}^{r}\left(\e{\tilde c \cdot g_{x}^{1/\theta}(u)}
-
\e{\tilde c \cdot x^{1/\theta}}\right)\e{-u}du.
\end{equation}

Using \eqref{t123}
 and the fact that $t\mapsto g_x(t)$ is increasing we obtain that
$g_{x}^{1/\theta}(u) \leq \eta_{1}^{1/\theta} \cdot x^{1/\theta}$ holds for all $u\in\left(0,r\right]$, where
$\eta_1:=g_1(\xi)\in(\eta,1)$. From this inequality and \eqref{t161} follows that
\begin{equation}\label{t162}
  I_1-V(x) \leq
  \e{\tilde c \cdot x^{1/\theta}}
  \left[
  \e{\tilde c \cdot x^{1/\theta}\left(\eta_{1}^{1/\theta}-1\right)}
  -1\right]
 = V(x)\left[
  \e{\tilde c \cdot x^{1/\theta}\left(\eta_{1}^{1/\theta}-1\right)}
  -1\right]
  .
\end{equation}
 Note that \structure{since $\eta<1$,} the exponent on the right hand side is negative. Hence $I_1-V(x)<0$ and tends to $-\infty $ as $x\to\infty $. To estimate $I_2$ \structure{we
combine
\eqref{t123} again with \eqref{t145} to obtain that } $g_{x}^{1/\theta}(u) \leq c_{2}^{1/\theta}u$ if $u \geq r$. Hence
\begin{equation}\label{t163}
I_2 \leq \int\limits_{u=r}^{\infty }
\e{\tilde cg_x^{1/\theta}(u)-u}du
 \leq \int\limits_{u=r}^{\infty }
\e{(\tilde c \cdot c_2^{1/\theta}-1) \cdot u}du.
\end{equation}
Now we fix
\begin{equation}\label{t164}
  \tilde c:=\frac{1}{2 c_{2}^{1/\theta}}.
\end{equation}
Then by \eqref{t163},
we have
  $I_2  \leq 2$.

Combining  this and \eqref{t162}
we obtain from \eqref{t160} that
\begin{description}
  \item[(a)] $\Delta V(x)<2$ for all $x>0$.
  \item[(b)] We can choose a $K$ such that $\Delta V(x) \leq -\frac{1}{2}V(x)$ if $x \geq K$.
\end{description}
\end{proof}

\subsection{Recurrence, irreducibility and strong aperiodicity  of $\Phi$}\label{z36}

It is a most fundamental property of $\Phi$  that it is a  $\psi$-irreducible chain (for the definition see \cite{meyn2009markov}).

\begin{lemma}\label{t158}
  The chain $\Phi$ is $\psi$-irreducible.
\end{lemma}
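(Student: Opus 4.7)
The plan is to verify $\psi$-irreducibility with $\psi$ taken to be Lebesgue measure on $(0, \infty)$. Recall that \eqref{t137} gives the explicit one-step density $p(x, y) = \exp(-g_x^{-1}(y))/g_x'(g_x^{-1}(y))$. By assumption \textbf{(A3)} in Definition \ref{z16}, the map $g_x : [0, \infty) \to [\eta x, \infty)$ is an increasing smooth bijection with derivative vanishing on at most a finite set; consequently $p(x, y)$ is well-defined, smooth, and strictly positive on $\{y > \eta x\}$ apart from finitely many exceptional $y$'s, and vanishes identically on $\{y \leq \eta x\}$.

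The crucial step is then to iterate: writing the $n$-step kernel as a density $p^{(n)}(x, z)$ via an $(n-1)$-fold convolution integral of one-step densities, I would show that $p^{(n)}(x, z) > 0$ for every $z > \eta^n x$. For such $z$, consider the log-linear interpolated path $y_k^{\ast} := x^{1-k/n} z^{k/n}$, $k = 1, \ldots, n-1$; its consecutive ratios satisfy $y_k^{\ast}/y_{k-1}^{\ast} = (z/x)^{1/n} > \eta$, the strict inequality holding precisely because $z > \eta^n x$. Thus each factor $p(y_{k-1}^{\ast}, y_k^{\ast})$ lies in the strict-positivity domain described above (perturbing slightly, if necessary, to avoid the finite exceptional set), and by smoothness of the integrand there is an open neighborhood of $(y_1^{\ast}, \ldots, y_{n-1}^{\ast})$ on which the integrand is uniformly bounded below, so $p^{(n)}(x, z) > 0$ after integration.

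With this in hand, I finish as follows. Given any Borel set $A \subset (0, \infty)$ with positive Lebesgue measure, pick $m$ so that $\mathrm{Leb}(A \cap [1/m, m]) > 0$, and for an arbitrary initial state $x > 0$ choose $n$ large enough that $\eta^n x < 1/m$, which is possible because $\eta \in (0, 1)$. Then $A \cap [1/m, m]$ sits inside $\{z : z > \eta^n x\}$, on which $p^{(n)}(x, \cdot)$ is almost everywhere strictly positive, so $P^n(x, A) > 0$. This gives $\psi$-irreducibility with $\psi = \mathrm{Leb}|_{(0,\infty)}$.

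The main obstacle I anticipate is the iteration step, namely turning the qualitative forward-accessibility observation of Remark \ref{t144}(5) into a \emph{strict positivity} statement for the $n$-step density rather than mere reachability of points. The log-linear interpolation above is the explicit device intended to handle this: it both quantifies which $z$ are reachable in $n$ steps, giving the sharp threshold $\eta^n x$, and, through the open neighborhood argument, converts a single admissible path $(y_1^{\ast}, \ldots, y_{n-1}^{\ast})$ into a positive-measure set of paths along which the convolution integrand is bounded below.
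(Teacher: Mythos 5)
Your proof is correct, but it takes a genuinely different route from the paper. The paper stays entirely inside the Meyn--Tweedie nonlinear state-space machinery: it observes that $\Phi$ is a scalar nonlinear model satisfying SNSS1--SNSS3, uses the forward accessibility of the associated control system from Remark \ref{t144}(5) together with \cite[Proposition 7.1.2]{meyn2009markov} to conclude that $\Phi$ is a $T$-chain, and then invokes \cite[Theorem 6.0.1]{meyn2009markov}, which reduces $\psi$-irreducibility of a $T$-chain to the reachability condition $L(x,O)>0$ for every open $O$ --- a condition the paper declares obvious from the construction. You instead prove Lebesgue-irreducibility directly: you use the explicit one-step density \eqref{t137}, note it is positive on $\{y>\eta x\}$ up to the finitely many critical values coming from \textbf{(A3)} (in fact the bad pairs lie on finitely many rays $y=g_1(t_i)\,x$, by self-affinity, which is why your perturbation step is harmless), and then the geometric interpolation $y_k^{\ast}=x^{1-k/n}z^{k/n}$ together with continuity of the integrand gives $p^{(n)}(x,z)>0$ exactly on the reachable set $z>\eta^n x$, matching $g_x^{(n)}(0,\dots,0)=\eta^n x$ from Remark \ref{t144}(5). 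What your approach buys is self-containedness and a slightly stronger, quantitative conclusion (an explicit irreducibility measure, namely Lebesgue measure, and the exact $n$-step reachability threshold), at the cost of the density bookkeeping; what the paper's approach buys is brevity and the $T$-chain property itself, with the price that the final reachability step is only asserted --- your interpolation argument is essentially the detail that assertion is hiding.
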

\begin{proof}
Our chain  $\Phi$ is  a so-called scalar nonlinear model which clearly satisfies the conditions SNSS1-SNSS3 of
the book \cite{meyn2009markov}. Also the associated control system is forward accessible (see part (5) of Remark \ref{t144}). Then by  \cite[Proposition 7.1.2]{meyn2009markov} $\Phi$ is a so-called $T$-chain. Let
$$
L(x,A):=\mathbb{P}_x\left(
\Phi
\mbox
{ ever enters }A
\right)
$$
According to \cite[Theorem 6.0.1]{meyn2009markov} the chain $\Phi$ is a $\psi$-irreducible chain if
\begin{equation}\label{z38}
  L(x,O)>0,\quad \forall  x>0 \mbox{ and open set }O\subset(0,\infty ).
\end{equation}
However, it is obvious from the construction that \eqref{z38} holds in our case.
\end{proof}

The purpose of the next two technical lemmas are to
prove that the interval $(0,K]$ is a petite set (see \cite[Section 5.5]{meyn2009markov}) for all $K>0$. \structure{Roughly speaking, a set $A$ is a petite set if there is a function that serves as a \emph{uniform  lower bound} on the density of the transition kernel of the Markov Chain from an arbitrary $x \in A$ to the complement of $A$. We apply this for a $K$ which satisfies Proposition \ref{t136}. This yields a petite set $(0,K]$ out of $\Delta V<0$. Then by \cite[Theorem 8.0.2 (ii)]{meyn2009markov} he chain is recurrent.}

\begin{lemma}
   \structure{Recall the constants $c_1$ from Definition \ref{z16}, {\bf (A2b)} and $c_3$ from \eqref{t147} from Fact \ref{fact2}}. For $t>0$ let us define the function
  \begin{equation*}\label{t150}
  \mathfrak{h}(t):=
 c_3 \exp\left(-\left(t/c_1\right)^{1/\theta}\right) \cdot
  t^{-(1-1/\theta)}.
    \end{equation*}
Then
\begin{equation*}\label{t151}
  \left|
  \frac{d}{dt}\exp\left(-g_{x}^{-1}(t)\right)
  \right|
  >
  \mathfrak{h}(t), \mbox{ if }
  t>\eta \cdot x.
\end{equation*}
\end{lemma}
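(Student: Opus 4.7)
The plan is straightforward: the quantity to be bounded is essentially the product of $(g_x^{-1})'(t)$ and $e^{-g_x^{-1}(t)}$, and Fact \ref{fact2} already provides a lower bound for the first factor and an upper bound on $g_x^{-1}(t)$ which converts to a lower bound on the second factor. So the proof should just consist of stringing these two inequalities together.

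First I would differentiate directly. Since $g_x$ is increasing (assumption {\bf(A3)}), so is its inverse, and by the chain rule
\[
\frac{d}{dt}\exp\bigl(-g_x^{-1}(t)\bigr) = -(g_x^{-1})'(t)\,\exp\bigl(-g_x^{-1}(t)\bigr),
\]
with $(g_x^{-1})'(t)>0$. Hence the absolute value equals $(g_x^{-1})'(t)\,\exp(-g_x^{-1}(t))$, and it suffices to show that on the range $t>\eta\cdot x$ this product exceeds $\mathfrak h(t)$.

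Next, for the first factor I would quote Fact \ref{fact2}(a), equation \eqref{t147}, which gives $(g_x^{-1})'(t)\ge c_3\,t^{-(1-1/\theta)}$ precisely on $t>\eta x$. For the second factor I would apply Fact \ref{fact2}(b), equation \eqref{t146}, so that $g_x^{-1}(t)\le (t/c_1)^{1/\theta}$ on the same range; since the function $s\mapsto e^{-s}$ is decreasing, this inequality flips to
\[
\exp\bigl(-g_x^{-1}(t)\bigr)\;\ge\;\exp\bigl(-(t/c_1)^{1/\theta}\bigr).
\]

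Multiplying the two bounds yields
\[
\Bigl|\tfrac{d}{dt}\exp(-g_x^{-1}(t))\Bigr|\;\ge\;c_3\,t^{-(1-1/\theta)}\exp\bigl(-(t/c_1)^{1/\theta}\bigr)\;=\;\mathfrak h(t),
\]
which is exactly the claimed estimate. I expect no real obstacle here, since the content of the lemma is essentially a repackaging of Fact \ref{fact2}; the only small subtlety is to make sure that both inequalities in Fact \ref{fact2} are valid on the common range $t>\eta x$ on which the lemma is stated, and that the inequality produced is strict (either use that $g_x^{-1}(t)>0$ strictly, or observe that \eqref{t146} can be replaced by a strict inequality away from the boundary, which suffices for the application later to petite sets).
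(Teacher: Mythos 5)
Your proof is correct and follows essentially the same route as the paper: apply the chain rule, bound $(g_x^{-1})'(t)$ from below by \eqref{t147} and $\exp(-g_x^{-1}(t))$ from below via \eqref{t146}, and multiply. Your added remark about strictness is a minor refinement the paper glosses over, but the argument is otherwise identical.
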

\begin{proof}
The function $t\mapsto \exp\left(-g_{x}^{-1}(t)\right)$ monotone decreasing.
  By chain rule:
  $$
  -\frac{d}{dt}\exp\left(-g_{x}^{-1}(t)\right)
  =
\exp\left(-g_{x}^{-1}(t)\right)  \cdot
\frac{d}{dt}
 g_{x}^{-1}(t)
  $$
  We estimate  the first term from below by the inequality
   $ g_{x}^{-1}(t) \leq \left(t/c_1\right)^{1/\theta}$ for $x>0$ and $t>x \cdot \eta$
  and second term by\eqref{t147}. This yields the proof.
\end{proof}

In the rest of this section we use the terminology of the book \cite{meyn2009markov}.
For an arbitrary $K>0$ we define the measure $\nu^{(K)}$ supported on  $[K,\infty )$ by
\begin{equation}\label{t152}
  \nu^{(K)}([\alpha,\beta]):=
  \int\limits_{t=\alpha}^{\beta }\mathfrak{h}(t)dt,
\end{equation}
where $K \leq \alpha<\beta$.

\begin{lemma}\label{t153}
  For every $K>0$ the set $(0,K]$ is a small set (see Section \cite[Section 5.2]{meyn2009markov} for the definition.)
  and then by \cite[Proposition 5.5.3]{meyn2009markov} $(0,K]$ is also a petite set for all $K>0$.
\end{lemma}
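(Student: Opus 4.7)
The plan is to verify the minorization condition
$$P^{m}(x, A) \geq \nu_m(A) \qquad \forall x \in (0,K],\ A\text{ Borel},$$
with $m=1$ and $\nu_1$ essentially the measure $\nu^{(K)}$ defined in \eqref{t152}. Once this is established, $(0,K]$ is a small set by definition, and hence petite by the cited \cite[Proposition 5.5.3]{meyn2009markov}.

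The first step is to rewrite the kernel density \eqref{t137} as a total derivative. A direct application of the chain rule gives
$$p(x,y)=\frac{\exp\bigl(-g_x^{-1}(y)\bigr)}{g_x'\bigl(g_x^{-1}(y)\bigr)} = -\frac{d}{dy}\exp\bigl(-g_x^{-1}(y)\bigr), \qquad y>\eta\cdot x,$$
so the previous lemma yields the pointwise lower bound $p(x,y)\geq \mathfrak{h}(y)$ valid on the open set $\{y>\eta x\}$.

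Next I would exploit the hypothesis $x\in (0,K]$: since $\eta<1$, this implies $\eta x\leq \eta K < K$, so for every $y>K$ one has $y>\eta x$ uniformly in $x\in (0,K]$, and the lower bound $p(x,y)\geq \mathfrak{h}(y)$ applies simultaneously for all such $x$. Integrating over $A\cap (K,\infty)$ gives
$$P(x,A) \;\geq\; \int_{A\cap (K,\infty)} p(x,y)\,dy \;\geq\; \int_{A\cap (K,\infty)} \mathfrak{h}(y)\,dy \;=\; \nu^{(K)}(A),$$
where in the last equality we used the definition \eqref{t152}. Since $\mathfrak{h}(y)=c_3 e^{-(y/c_1)^{1/\theta}}y^{-(1-1/\theta)}$ is integrable on $[K,\infty)$ (the stretched-exponential factor dominates the mild polynomial singularity/growth), $\nu^{(K)}$ is a finite nontrivial Borel measure, so the minorization condition holds with $m=1$ and the small-set conclusion follows. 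An appeal to \cite[Proposition 5.5.3]{meyn2009markov} then upgrades small to petite.

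There is no real obstacle here: all the analytic work has been done in the previous lemma, and the present statement is essentially a bookkeeping argument that transfers the uniform density lower bound into a measure-theoretic minorization on Borel sets. The only minor point to watch is that the lower bound $p(x,y)\geq \mathfrak{h}(y)$ is guaranteed only for $y>\eta x$, which is why the minorizing measure $\nu^{(K)}$ is chosen to be supported on $[K,\infty)$ rather than on all of $(0,\infty)$; this is enough for the small-set property, since the latter is a one-sided inequality.
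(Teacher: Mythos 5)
Your proposal is correct and follows essentially the same route as the paper: both rest on the observation that $p(x,y)=-\frac{d}{dy}\exp\bigl(-g_x^{-1}(y)\bigr)\ge \mathfrak{h}(y)$ (the preceding lemma) for $y>\eta x$, which for $x\in(0,K]$ holds uniformly on $(K,\infty)$, giving the minorization $P(x,A)\ge\nu^{(K)}(A)$ with $m=1$ and hence the small-set, and then petite-set, property via the cited result of Meyn--Tweedie. The only cosmetic difference is that the paper integrates the density over an interval $[\alpha,\beta]$ explicitly before invoking the derivative bound, while you apply the pointwise bound directly; the content is identical.
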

\begin{proof} Let $K<\alpha<\beta$.
 Then for an $x\in (0,K]$ we have
  \begin{multline*}
  P(x,[\alpha,\beta])=
  \int\limits_{t=\alpha}^{\beta}
  p(x,t)dt=
  \int\limits_{t=\alpha}^{\beta}
  \dfrac{\exp
  \left(
  -g_{x}^{-1}(t)
  \right)}
  {g'_x\left(g_{x}^{-1}(t)\right)}dt   \\
=
  \exp\left[-g_{x}^{-1}(\alpha)\right]
  -
  \exp\left[-g_{x}^{-1}(\beta)\right] \geq \int\limits_{\alpha}^{\beta}\mathfrak{h}(t)dt.
  \end{multline*}
  That is for every $x\in(0,K]$ we have
 $
  P(x,[\alpha,\beta]) \geq \nu^{(K)}([\alpha,\beta]).
 $
 This implies that the interval $(0,K]$ is a small set. Then $(0,K]$ is also a petite set by \cite[Proposition 5.5.3]{meyn2009markov}.

\end{proof}

\begin{corollary}\label{t156}
The chain $\Phi$ is recurrent.
\end{corollary}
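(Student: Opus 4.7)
The plan is to read recurrence off from the three ingredients we have just assembled: the $\psi$-irreducibility of Lemma \ref{t158}, the petiteness of every bounded interval $(0,K]$ from Lemma \ref{t153}, and the drift inequality of Proposition \ref{t136}. All that remains is to verify that these three facts match the hypotheses of a standard Foster-type criterion from the Meyn--Tweedie book and then to invoke it; this is exactly the route already flagged in the discussion preceding Lemma \ref{t153}.

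First, I would check that the Lyapunov function $V(x) = \exp(\tilde c\cdot x^{1/\theta})$ from Proposition \ref{t136} is well adapted to Foster's criterion. Since $\tilde c>0$, the function $V$ satisfies $V\geq 1$ pointwise, $V(x)\to\infty$ as $x\to\infty$, and every sublevel set $\{V\leq c\}$ is contained in some bounded interval $(0,K']$, which by Lemma \ref{t153} is petite. Thus $V$ is \emph{unbounded off petite sets} in the terminology of \cite{meyn2009markov}.

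Second, using $V\geq 1$ on the right-hand side of the drift inequality in Proposition \ref{t136}, I would rewrite the bound as
\begin{equation*}
\Delta V(x) \;\leq\; -\tfrac{1}{2}V(x) + 2\,\ind_{(0,K]}(x) \;\leq\; -\tfrac{1}{2} + 2\,\ind_{(0,K]}(x),
\end{equation*}
so with $C:=(0,K]$ the chain satisfies the standard Foster drift condition $\Delta V\leq -\tfrac{1}{2} + 2\,\ind_C$ toward the petite set $C$, while $V$ is bounded (by $V(K)$) on $C$ itself.

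Third, I would invoke the textbook result \cite[Theorem 8.0.2(ii), cf.\ Theorem 11.0.1]{meyn2009markov}: any $\psi$-irreducible Markov chain admitting a measurable function $V\geq 1$ that is unbounded off petite sets and that satisfies a Foster drift inequality $\Delta V\leq -\varepsilon + b\,\ind_C$ with $C$ petite is Harris recurrent, and in particular recurrent in the sense needed here. All hypotheses are already established, and the corollary is immediate. The only genuine work in this step is bookkeeping --- matching the ingredients to the precise hypotheses of the cited theorem --- and no further analysis of the family $\{g_x\}$ or of the transition density $p(x,y)$ is required, so I anticipate no real obstacle beyond careful citation.
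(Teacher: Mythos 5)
Your argument is correct and follows essentially the same route as the paper: the paper's proof of Corollary \ref{t156} likewise combines the petiteness of $(0,K]$ from Lemma \ref{t153} with the drift bound of Proposition \ref{t136} (together with the $\psi$-irreducibility of Lemma \ref{t158}, needed as a standing hypothesis) and invokes \cite[Theorem 8.0.2(ii)]{meyn2009markov}. Your additional checks --- that $V(x)=\exp(\tilde c\, x^{1/\theta})$ is unbounded off petite sets and that the drift inequality yields $\Delta V\leq 0$ off $(0,K]$ --- are exactly the bookkeeping implicit in the paper's one-line proof, so no further comment is needed (beyond noting that Theorem 8.0.2(ii) literally yields recurrence, which is all the corollary claims).
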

\begin{proof}
   Using Lemma \ref{t153} and Proposition \ref{t136} the assertion immediately follows from
   \cite[Theorem 8.0.2 (ii)]{meyn2009markov}.
\end{proof}

\begin{lemma}\label{t155}
  The chain $\Phi:=\left\{\Phi_n\right\}$ is strongly aperiodic (for the terminology see \cite[p. 114]{meyn2009markov})
\end{lemma}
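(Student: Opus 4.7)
The plan is to upgrade the minorization produced in Lemma \ref{t153} so that the minorizing measure charges the small set itself. Recall from \cite[p.~114]{meyn2009markov} that a $\psi$-irreducible chain is strongly aperiodic provided there exists a $\nu_1$-small set $A$ with $\nu_1(A)>0$, i.e., $P(x,\cdot)\geq \nu_1(\cdot)$ for every $x\in A$ and $\nu_1$ gives positive mass to $A$ itself. In Lemma \ref{t153} the minorizing measure $\nu^{(K)}$ is supported on $[K,\infty)$, which is disjoint from the small set $(0,K]$, so only a minor refinement of that argument is needed.

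First I would fix $K_2>0$ and choose $K_1$ with $\eta K_2<K_1<K_2$, which is possible since $\eta\in(0,1)$ by assumption {\bf(A3)} of Definition \ref{z16}. For every $x\in(0,K_2]$ this choice guarantees $\eta x\leq \eta K_2<K_1$, so the interval $[K_1,K_2]$ lies strictly above the lower endpoint $\eta x$ of the support of the density $p(x,\cdot)$ defined in \eqref{t137}. Consequently, the pointwise inequality underlying the calculation in Lemma \ref{t153}, namely
\begin{equation*}
p(x,t)\;=\;-\frac{d}{dt}\exp\left(-g_{x}^{-1}(t)\right)\;\geq\;\mathfrak{h}(t),\qquad t>\eta x,
\end{equation*}
remains valid uniformly for every $x\in(0,K_2]$ and every $t\in[K_1,K_2]$.

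Using this density lower bound, I would define the candidate minorizing measure by
\begin{equation*}
\nu_1(B)\;:=\;\int_{B\cap[K_1,K_2]}\mathfrak{h}(t)\,dt,\qquad B\in\mathcal{B}(\mathbb{R}^+).
\end{equation*}
Integrating the pointwise bound over $B\cap[K_1,K_2]$ yields $P(x,B)\geq \nu_1(B)$ for every $x\in(0,K_2]$ and every Borel set $B$, so $A:=(0,K_2]$ is $\nu_1$-small. Since $\mathfrak{h}$ is strictly positive and continuous on the bounded interval $[K_1,K_2]$, we have $\nu_1(A)=\int_{K_1}^{K_2}\mathfrak{h}(t)\,dt>0$, which is exactly the strong aperiodicity criterion.

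There is no genuine obstacle here; the only subtlety is aligning the small set with the support of the minorizing measure, which is achieved by the single inequality $\eta K_2<K_1<K_2$. Indeed, this is why the argument of Lemma \ref{t153} alone does not suffice: there the freedom to pick $\alpha\geq K$ was used to produce a clean lower bound, but now we deliberately sacrifice a little mass by starting the minorizer at $K_1<K_2$ so that the resulting measure lives in the interior of the small set.
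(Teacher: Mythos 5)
Your proof is correct and follows essentially the same route as the paper: the paper also upgrades the minorization of Lemma \ref{t153} by exhibiting a set that the minorizing measure charges, choosing $A:=[\eta,1]$ and $\nu:=\nu^{(\eta)}$, so that $p(x,t)\geq\mathfrak{h}(t)$ for $t>\eta x$ gives $P(x,B)\geq\nu(B)$ for all $x\in A$ with $\nu(A)>0$. Your variant with $A=(0,K_2]$ and the measure $\int_{B\cap[K_1,K_2]}\mathfrak{h}(t)\,dt$, $\eta K_2<K_1<K_2$, is just a slightly different (and equally valid) concrete choice of small set and truncation.
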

\begin{proof}
  Let $A:=[\eta,1]$ and let $\nu:=\nu^{(\eta)}$ \structure{as defined in \eqref{t152}.}
  Then $\nu(A)>0$ and for every
  $x\in A$ we have $P(x,B) \geq \nu(B)$ holds for every $B\subset(0,\infty )$.
\end{proof}

 Now we are ready to prove the main result of the section.
\begin{proof}[Proof of Theorem \ref{t131}]\label{z103}
Part {\bf(a)} and {\bf(b)}:
We know that $\Phi$ is a $\psi$-irreducible and aperiodic chain (see Lemmas \ref{t158}, \ref{t155}). We have also verified that for any $K$ the set
$(0,K]$ is a petite set. Hence by Proposition \ref{t136},
the conditions  of (iii) of \cite[Theorem 15.0.2]{meyn2009markov} hold.
  It follows from this theorem
that $\Phi$ is positive recurrent with unique stationary measure  $\pi$ satisfying \eqref{t132}.

Part {\bf(c)} immediately follows from \cite[Theorem 14.0.1]{meyn2009markov} since condition (iii) of
\cite[Theorem 14.0.1]{meyn2009markov} holds with the choice of $V(x)=f(x)$.

\end{proof}



\subsection{The density $p_j(x,y)$}\label{z37}

For an $A\subset \mathbb{R}^+:=(0,\infty ) $ let
$
P_j(x,A):=\mathbb{P}_x\left(Z_j\in A\right)
$
and let $p_j(x,y)$ be the density of $P_j(x,dy)$. When $j=1$ then we suppress the index.
Similarly to \eqref{t137}, using the density of an exponential random variable with parameter $\lambda_j$, we have that
\begin{equation}\label{t174}
  p_j(x,y)=\frac{\lambda_j
  \exp\left(-\lambda_j
  g_{x}^{-1}(y)\right)}
  {{g'_x}(g_{x}^{-1}(y))}.
\end{equation}
\structure{Combining  \eqref{0123}
and the second part of \eqref{t123} we obtain the scaling property}
\begin{equation}\label{t173}
p_j\left(\frac{x}
{\lambda_{j}^{\theta}},
\frac{y}{\lambda_{j}^{\theta}}\right)
=
\lambda_j^\theta \cdot
p_1(x,y).
\end{equation}
Let $\pi_j$ be the stationary distribution  for the chain
$\Phi^{(j)}$. That is $\pi_j$ is defined as the unique finite measure satisfying
\begin{equation}\label{t175}
  \pi_j(A)=\int P_j(x,A)d\pi_j(x)=
  \iint \ind_A(y) \cdot p_j(x,y)d y d\pi_j(x).
\end{equation}
It follows from Theorem \ref{t131} that $\pi_j$ exists and  absolute continuous. Let $\varphi_j$ be its density. Then
\begin{equation}\label{t176}
  \int \varphi_j(x)p_j(x,y)dx=\varphi_j(y).
\end{equation}
It follows from this \structure{identity} and \eqref{t173}
that  for all $j$ and  $u\in \mathbb{R}^+$ and $A\subset \mathbb{R}^+$ that
\begin{equation}\label{t177}
  \varphi_j(u)=\varphi(u\lambda_{j}^{\theta})
   \cdot \lambda_{j}^{\theta} \mbox{ and }
   \pi_j(A)=\pi_0(\lambda_{j}^{\theta} \cdot A).
\end{equation}
From now on we always assume that
$\Phi^{(j)}_{0}$ is chosen according to $\pi_j$.


\section{Some large deviation and variance estimates}

\subsection{Some large deviation results}\label{t234}
Now we estimate the number and length of losses of $Z_j(t)$ on the interval \structure{$[0,1]$}, using some large deviation results.
The numbers of losses on the interval
$[0,1]$ of $Z_j(t)$ for  $j=1,2, \dots $ are given by the independent  $\mathrm{Poisson}(\lambda_j)$ processes. \structure{Let
 $N_j(t)$ be the number of losses of $Z_j(t)$ on $[0,t]$.}
We write
\begin{equation}\label{z45}
  \mathcal{N}_j:=N_j(1)
\end{equation}
Then
$\mathcal{N}_j\sim \mathrm{Poi}(\lambda_j)$.
\structure{Here we define four events that are likely to happen in these Poisson processes. Recall that $T_k^{(j)}$ stands for the time of the $k$th loss while $\tau_k^{(j)}=T_k^{(j)}-T_{k-1}^{(j)}$ is the $k$th inter-event time.}
 \begin{align} \label{e1juli}
E_{1}^{(j)}&:=
\left\{
\frac{\lambda_j}{2}<\mathcal{N}_j \leq 2\lambda_j
\right\}
\\
\label{e2juli}
E^{(j)}_{2}&:=
\left\{
\#\left\{
k: k\le\lfloor\lambda_j/2\rfloor,
\tau_{k}^{(j)}>\frac{1}{2\lambda_j}
\right\} \geq \frac{\lambda_j}{4}
\right\},\\
\label{t176}
  E^{(j)}_{3}&:=
\left\{
\#\left\{k:
k \le 2\lambda_j,
 \frac{1}{100\lambda_j}<
\tau_{k}^{(j)}<\frac{5}{\lambda_j}
\right\} \geq 2\lambda_j\frac{98}{100}
\right\}
\end{align}

Further, we define
\begin{equation}\label{z42}
  E^{(j)}_{4}:=\left\{
 \#\left\{k:
  k\le \mathcal{N}_j, \frac{1}{100\lambda_j}<
  \tau_{k}^{(j)}
  <
  \frac{5}{\lambda_j}
  \right\}>0.97\mathcal{N}_j\right\}.
\end{equation}

Finally we set
\begin{equation}\label{t221}
  E^{(j)}:=E^{(j)}_1\cap E^{(j)}_2\cap E^{(j)}_3\cap E^{(j)}_4.
\end{equation}
\begin{fact}\label{t222}
For almost all realizations $\omega$ \structure{of the Poisson point processes with intensities $\lambda_j, j\ge 1$},
  there exists a $j_0=j_0(\omega)$ such that for all $j>j_0$ the event $E^{(j)}$ holds.
\end{fact}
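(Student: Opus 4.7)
The plan is a Borel--Cantelli argument: if $\sum_{j=1}^\infty \mathbb{P}((E^{(j)})^c)<\infty$, then almost surely at most finitely many of the events $(E^{(j)})^c$ occur, yielding the desired $j_0(\omega)$. By a union bound, it suffices to show $\mathbb{P}((E_i^{(j)})^c)\le Ce^{-c\lambda_j}$ for each $i=1,2,3,4$ with absolute constants $C,c>0$. The summability $\sum_j e^{-c\lambda_j}<\infty$ then follows from assumption \textbf{(A4)} in Definition~\ref{z16}: since $\lambda_j\to\infty$, eventually $c\lambda_j\ge \theta\log\lambda_j$, so $e^{-c\lambda_j}\le \lambda_j^{-\theta}\le \log\lambda_j/\lambda_j^\theta$, and the last series converges by \eqref{o59}.

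The events $E_1^{(j)},E_2^{(j)},E_3^{(j)}$ are handled by standard concentration. For $E_1^{(j)}$, since $\mathcal{N}_j\sim\mathrm{Poisson}(\lambda_j)$, Chernoff's bound for the Poisson distribution gives $\mathbb{P}((E_1^{(j)})^c)=\mathbb{P}(\mathcal{N}_j\le\lambda_j/2)+\mathbb{P}(\mathcal{N}_j>2\lambda_j)\le 2e^{-c_1\lambda_j}$. For $E_2^{(j)}$, the indicators $\ind\{\tau_k^{(j)}>1/(2\lambda_j)\}$ for $k=1,\ldots,\lfloor\lambda_j/2\rfloor$ are i.i.d.\ Bernoulli with parameter $e^{-1/2}>1/2$, so their expected sum exceeds the threshold $\lambda_j/4$ by a constant multiple of $\lambda_j$, and Hoeffding's inequality yields $\mathbb{P}((E_2^{(j)})^c)\le e^{-c_2\lambda_j}$. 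For $E_3^{(j)}$, the indicators $\ind\{\tau_k^{(j)}\in(1/(100\lambda_j),5/\lambda_j)\}$ for $k=1,\ldots,\lceil 2\lambda_j\rceil$ are i.i.d.\ Bernoulli with parameter $p=e^{-1/100}-e^{-5}>0.98$, so Hoeffding again yields $\mathbb{P}((E_3^{(j)})^c)\le e^{-c_3\lambda_j}$.

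The delicate case is $E_4^{(j)}$, because the sum runs up to the random index $\mathcal{N}_j$ and the family $(\tau_1^{(j)},\ldots,\tau_{\mathcal{N}_j}^{(j)})$ is \emph{not} unconditionally i.i.d. (large $\tau_k$'s bias $\mathcal{N}_j$ downward). I would condition on $\mathcal{N}_j=n$: by the classical order-statistics representation of a Poisson process, $(T_1^{(j)},\ldots,T_n^{(j)})$ is then distributed as the order statistics of $n$ i.i.d.\ uniforms on $[0,1]$, so each $\tau_k^{(j)}$ has marginal $\mathrm{Beta}(1,n)$. An elementary computation yields $\mathbb{P}(\tau_k^{(j)}\in(1/(100\lambda_j),5/\lambda_j)\mid\mathcal{N}_j=n)=(1-1/(100\lambda_j))^n-(1-5/\lambda_j)^n$, which for any fixed $\delta>0$ and every $n\ge(3/4+\delta)\lambda_j$ stays above $0.97$ by a positive margin once $\lambda_j$ is large. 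For such $n$, the exchangeable indicators $\ind\{\tau_k^{(j)}\in(1/(100\lambda_j),5/\lambda_j)\}$, being functions of uniform spacings, are negatively associated, so a Hoeffding-type concentration (valid for negatively associated variables) yields $\mathbb{P}((E_4^{(j)})^c\mid\mathcal{N}_j=n)\le e^{-c_4 n}$. In the complementary range $\mathcal{N}_j<(3/4+\delta)\lambda_j$, Chernoff for Poisson gives $\mathbb{P}(\mathcal{N}_j<(3/4+\delta)\lambda_j)\le e^{-I(3/4+\delta)\lambda_j}$ with rate function $I(x)=1-x+x\log x>0$ for $x<1$. Combining the two regimes gives $\mathbb{P}((E_4^{(j)})^c)\le Ce^{-c\lambda_j}$. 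This last point is the main obstacle: the naive unconditional Hoeffding bound fails, since the expected fraction of ``bad'' inter-event times conditional on an atypically small $\mathcal{N}_j$ can exceed $0.03$; splitting $\mathcal{N}_j$ into typical and atypical regimes is what closes the argument.
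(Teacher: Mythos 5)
Your proposal follows the same route as the paper's own proof: exponential (Chernoff/Hoeffding) bounds on $\mathbb{P}\bigl((E_i^{(j)})^c\bigr)$ for $i=1,\dots,4$, summability of these bounds via \eqref{o59}, and Borel--Cantelli. Your handling of $E_1^{(j)},E_2^{(j)},E_3^{(j)}$ is the paper's argument (for $E_2^{(j)}$ the paper bounds the complementary indicators $\{\tau_k^{(j)}<1/(2\lambda_j)\}$ with $p<0.4$, which is the same estimate). The one place you genuinely add content is $E_4^{(j)}$, which the paper dismisses with ``in the exact same way as above'': you rightly note that the count runs up to the random index $\mathcal{N}_j$, so an unconditional i.i.d.\ concentration bound does not literally apply, and this extra care goes beyond what the paper writes.

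Two details of your fix for $E_4^{(j)}$ need repair, though both are easy. First, the claim that $(1-1/(100\lambda_j))^n-(1-5/\lambda_j)^n$ stays above $0.97$ for \emph{every} $n\ge(3/4+\delta)\lambda_j$ is false for $n$ much larger than $\lambda_j$: since $(1-1/(100\lambda_j))^n\le e^{-n/(100\lambda_j)}$, already for $n\ge 4\lambda_j$ the first term is below $0.97$. You must therefore also truncate the upper tail, e.g.\ restrict to $n\le 2\lambda_j$ and absorb $\mathbb{P}(\mathcal{N}_j>2\lambda_j)$ by the Poisson Chernoff bound you already used for $E_1^{(j)}$. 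Second, negative association is preserved only under coordinatewise monotone maps, and the indicator of the two-sided window $(1/(100\lambda_j),5/\lambda_j)$ is not monotone in the spacing; split the bad count into the number of spacings $\le 1/(100\lambda_j)$ plus the number $\ge 5/\lambda_j$ and apply the concentration bound to each monotone count separately. Alternatively, you can avoid conditioning altogether: on $E_1^{(j)}$ we have $\mathcal{N}_j\in(\lambda_j/2,2\lambda_j]$, and $(E_4^{(j)})^c\cap E_1^{(j)}$ is contained in the union over the deterministic indices $n$ in that range of the events that at least $0.03\,n$ of the first $n$ i.i.d.\ $\mathrm{Exp}(\lambda_j)$ inter-event times fall outside the window; the per-term failure probability is about $0.017<0.03$, so each such event has probability at most $e^{-cn}\le e^{-c\lambda_j/2}$, and the union bound costs only a factor $2\lambda_j$, which keeps the series summable. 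With either repair your argument is complete and, on the point of $E_4^{(j)}$, more careful than the paper itself.
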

\begin{proof}
  It follows from a standard Chernoff bound for Poisson random variables  (see e.g. \cite[Theorem 2.19, Exercise 2.221]{Remcobook}) that
$$
\mathbb{P}\left(\mathcal{N}_j<\frac{\lambda_j}{2}\right)
<\exp(-\lambda_j/8) \mbox{ and }
\mathbb{P}\left(\mathcal{N}_j>2\lambda_j\right)
<\exp(-\lambda_j \cdot 3/8).
$$
By \eqref{o59} both of these series are summable in $j$. Thus, using Borel-Cantelli Lemma we obtain that the event $E^{(j)}_1$ holds for all sufficiently large $j$.
To estimate the probability of ${E_{2}^{(j)}}^c$, the complement of $E_{2}^{(j)}$, from above  note that
$$
{E_{2}^{(j)}}^c=
\left\{
\sum\limits_{k=1}^{\lambda_j/2}
X_k>\lambda_j/4
\right\},
$$
where $X_k=\ind_{\{  \tau_{k}^{(j)}<1/(2\lambda_j)\}}$ is the indicator of the event $\{\tau_{k}^{(j)}<1/(2\lambda_j)\}$, a Bernoulli random variable
with parameter
$p:=\mathbb{E}\left[X_k\right]=\mathbb{P}
\left(\tau^{(1)}<1/2\right)<0.4$.
Then again, from a usual Chernoff bound (see again \cite[Theorem 2.19]{Remcobook}) we obtain that
$$
\mathbb{P}\left({E_{2}^{(j)}}^c\right)
 \leq
\exp\left(
-\lambda_j \cdot \frac{\left(0.5-p\right)^2}{2\left(
p+\left[0.5-p\right]/3
\right)}
\right).
$$
This is also summable by \eqref{o59}.
Similarly, using the same argument for the lower and upper bounds we obtain that
\begin{equation}\label{t276}
  \mathbb{P}\left(
  {E_{3}^{(j)}}^c
  \right)
   \leq
   2\exp\left(-3\lambda_j/100 \right)
\end{equation}
Finally, in the exact same way as above one can easily see that
$ \mathbb{P}\left(
  {E_{4}^{(j)}}^c
  \right)$ is also summable.
Then we use Borel-Cantelli Lemma again to complete the proof of the fact.
\end{proof}

\begin{fact}\label{t223}
  Let
\begin{equation}\label{t224}
  \iota_j:=\frac{\theta+1}{\tilde c} \cdot \frac{ \log\lambda_j}{\lambda_j},
\end{equation}
where the constant $\tilde c>0$ comes from \eqref{t164}.
Further let
\begin{equation*}\label{t225}
  A_{k}^{(j)}:=\left\{\Phi_{k}^{(j)}<\iota_{j}^{\theta}\right\}
\mbox { and }
A^{(j)}:=\bigcap\limits_{k=1}^{2\lambda_j}A_{k}^{(j)}.
\end{equation*}
Then there exists a $j_1$ such that for every $j>j_1$ the event $A^{(j)}$ happens.
\end{fact}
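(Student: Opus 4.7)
The plan is to bound $\mathbb{P}((A^{(j)})^c)$ by a union bound over $k$ and then apply Borel--Cantelli, using the exponential tail of the stationary distribution $\pi_j$ supplied by Corollary \ref{t165}. Since we are assuming throughout Section \ref{z37} that $\Phi^{(j)}_0 \sim \pi_j$, every $\Phi^{(j)}_k$ has law $\pi_j$, so a tail estimate on $\pi_j$ applied $2\lambda_j$ times is all we need.

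First I would translate the tail bound from $\pi$ to $\pi_j$ via the scaling identity \eqref{t177}: for any Borel set $B \subset \mathbb{R}^+$,
\[
\pi_j(B) = \pi(\lambda_j^\theta\, B),
\]
so in particular
\[
\mathbb{P}\bigl(\Phi^{(j)}_k \geq \iota_j^\theta\bigr) = \pi_j\bigl([\iota_j^\theta,\infty)\bigr) = \pi\bigl([(\iota_j \lambda_j)^\theta,\infty)\bigr).
\]
Then I would apply Corollary \ref{t165} by summing the geometric series from $k_0 := \lceil \iota_j \lambda_j\rceil$ upward:
\[
\pi\bigl([(\iota_j\lambda_j)^\theta,\infty)\bigr) \leq \sum_{k \geq k_0} K_3 e^{-\tilde c \, k} \leq \frac{K_3}{1-e^{-\tilde c}}\, e^{-\tilde c\, \iota_j \lambda_j}.
\]
By the very choice of $\iota_j$ in \eqref{t224} we have $\tilde c\, \iota_j \lambda_j = (\theta+1)\log \lambda_j$, hence $e^{-\tilde c\, \iota_j \lambda_j} = \lambda_j^{-(\theta+1)}$.

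Next, I would apply a union bound over the (at most) $2\lambda_j$ values of $k$:
\[
\mathbb{P}\bigl((A^{(j)})^c\bigr) \leq \sum_{k=1}^{\lfloor 2\lambda_j\rfloor} \mathbb{P}\bigl(\Phi^{(j)}_k \geq \iota_j^\theta\bigr) \leq 2\lambda_j \cdot \frac{K_3}{1-e^{-\tilde c}}\, \lambda_j^{-(\theta+1)} = \frac{2K_3}{1-e^{-\tilde c}}\, \lambda_j^{-\theta}.
\]
The exponent $\theta+1$ in $\iota_j$ was chosen precisely so that this union bound produces a $\lambda_j^{-\theta}$, which is summable in $j$: assumption \eqref{o59} states that $\sum_j \log\lambda_j / \lambda_j^\theta < \infty$, and since $\log \lambda_j \geq 1$ for all but finitely many $j$ (as $\lambda_j \to \infty$), this implies $\sum_j \lambda_j^{-\theta} < \infty$. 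Borel--Cantelli then yields that almost surely only finitely many of the events $(A^{(j)})^c$ occur, which is exactly the desired conclusion: for $\omega$-a.e.\ realization there is a random $j_1(\omega)$ beyond which $A^{(j)}$ holds.

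The only delicate point is the stationarity hypothesis on $\Phi^{(j)}_0$; without it the individual $\Phi^{(j)}_k$ would not be $\pi_j$-distributed and one would need to invoke the geometric ergodicity from Theorem \ref{t131}(c) to control the distribution of $\Phi^{(j)}_k$. Since the preamble to Section \ref{z37} fixes $\Phi^{(j)}_0 \sim \pi_j$, this issue does not arise, and the main obstacle reduces to the bookkeeping of constants in the tail and the verification that $\sum \lambda_j^{-\theta} < \infty$ follows from \eqref{o59}.
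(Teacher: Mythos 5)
Your proposal is correct and follows essentially the same route as the paper: scale back to the reference stationary measure via \eqref{t177}, sum the exponential tail bound of Corollary \ref{t165} to get $e^{-\tilde c\,\iota_j\lambda_j}=\lambda_j^{-(\theta+1)}$, take a union bound over the $2\lambda_j$ steps to obtain an error of order $\lambda_j^{-\theta}$, and conclude by summability from \eqref{o59} and Borel--Cantelli. Your explicit remark on the stationarity assumption $\Phi^{(j)}_0\sim\pi_j$ is a welcome clarification of a point the paper only states implicitly at the end of Section \ref{z37}.
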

Note that if $E_{1}^{(j)}\cap A^{(j)}$ happens then for all $0 \leq t \leq 1$,  $Z_j(t)<\iota_j^\theta$ holds.
\begin{proof}We compute using the scaling property in \eqref{t177} of the stationary measure $\pi_j$
 \[
 \mathbb{P}\left({A_{k}^{(j)}}^c\right)
 =\mathbb{P}\left(\Phi_{k}^{(j)}>\iota_{j}^{\theta}\right)
 =\pi_j\left(\iota_{j}^{\theta},\infty \right)=\pi_0\left(\lambda_{j}^{\theta}\iota_{j}^{\theta},\infty \right).
 \]
 Decomposing the right hand side into  intervals $(k^\theta, (k+1)^{\theta})$ we estimate
  \[  \mathbb{P}\left({A_{k}^{(j)}}^c\right)
\leq \sum\limits_{k \geq \lambda_j\iota_{j}^{\theta}}\!\!
  \pi_0 \left(k^\theta,(k+1)^\theta\right)\!
    \leq
     \sum\limits_{k \geq \lambda_j\iota_{j}^{\theta}}\!\!
     K_3\exp\left(-c\lambda_j\iota_j\right)
      \leq C  \exp\left(-\tilde c\lambda_j\iota_j\right),\]
 for some constant $C>0$, where we used  Corollary \ref{t165}. Using this estimate we obtain
 \[
    \mathbb{P}\left( {A^{(j)}}^c\right)=
 \mathbb{P}\left(\bigcup_{k=1}^{2\lambda_j} {A_{k}^{(j)}}^c\right) \leq
 2\lambda_j
C  \exp\left(-\tilde c\lambda_j\iota_j\right)
   <
 C  \frac{1}{\lambda_{j}^{\theta}},
 \]
 again for some constant $C>0$.
By \eqref{o59} the rhs is summable. Applying Borel-Cantelli  lemma finishes the proof.
\end{proof}
In what follows we combine Fact \ref{t222} and Fact \ref{t223} to obtain
\begin{corollary}\label{t226}
Almost surely,
there exists  a $j_2$ (which depends on the representation) such that
\begin{equation}\label{t227}
  \forall j>j_2,\ \forall t\in[0,1], \ Z_j(t) \leq {\iota_j^\theta}.
\end{equation}

\end{corollary}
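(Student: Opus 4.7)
The plan is to combine Fact \ref{t222} and Fact \ref{t223} in a direct way. By Fact \ref{t222}, there is a random $j_0$ such that for every $j>j_0$ the event $E^{(j)}$, and in particular $E_1^{(j)}$, occurs; on $E_1^{(j)}$ the total number of losses of $Z_j$ on $[0,1]$ satisfies $\mathcal{N}_j\le 2\lambda_j$. By Fact \ref{t223}, there is another random index $j_1$ such that for every $j>j_1$ the event $A^{(j)}=\bigcap_{k=1}^{2\lambda_j}\{\Phi_k^{(j)}<\iota_j^\theta\}$ occurs. Setting $j_2:=\max(j_0,j_1)$ (finite almost surely by the two Borel--Cantelli arguments already carried out), both events hold simultaneously for all $j>j_2$.

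Next I would exploit the piecewise deterministic structure of $Z_j$. By assumption \textbf{(A3)} in Definition \ref{z16}, $g'_1\ge 0$, hence by \eqref{t123} each function $t\mapsto g_x(t)$ is monotone increasing, so on every inter-loss interval $[T_{k-1}^{(j)},T_k^{(j)})$ the process $Z_j$ is monotone increasing. Consequently, for any $t$ in this interval,
\begin{equation*}
Z_j(t)\;\le\;\lim_{s\uparrow T_k^{(j)}}Z_j(s)\;=\;Z_j(T_k^{(j)-})\;=\;\Phi_k^{(j)}.
\end{equation*}
Thus a uniform bound on the pre-loss values $\Phi_k^{(j)}$ for all relevant indices $k$ translates directly into a uniform bound on $Z_j(t)$ for $t\in[0,1]$.

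Now, for a given $j>j_2$, every $t\in[0,1]$ lies either in some $[T_{k-1}^{(j)},T_k^{(j)})$ with $k\le \mathcal{N}_j$, or in $[T_{\mathcal{N}_j}^{(j)},1]\subset[T_{\mathcal{N}_j}^{(j)},T_{\mathcal{N}_j+1}^{(j)})$. In either case one has $Z_j(t)\le \Phi_k^{(j)}$ for some $k\le \mathcal{N}_j+1\le 2\lambda_j+1$. Using the event $A^{(j)}$ (enlarging the index range in the intersection by one, or equivalently absorbing the extra index into $A^{(j)}$ at the cost of a negligible constant in the estimate of Fact \ref{t223}, which does not affect summability) gives $\Phi_k^{(j)}<\iota_j^\theta$ for all such $k$. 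Combining these inequalities yields $Z_j(t)\le \iota_j^\theta$ for all $t\in[0,1]$ and all $j>j_2$, which is precisely \eqref{t227}.

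The only subtlety worth flagging is the off-by-one issue in the index range: $A^{(j)}$ is defined as an intersection up to $k=2\lambda_j$, whereas one potentially needs the bound up to $k=\mathcal{N}_j+1$. On $E_1^{(j)}$ this is harmless for large $j$, and the probability estimate in Fact \ref{t223} easily absorbs one additional term since the per-index bound $C\exp(-\tilde c\lambda_j\iota_j)$ is far smaller than the summable bound $C\lambda_j^{-\theta}$ obtained after multiplying by $2\lambda_j$. Beyond that bookkeeping, the corollary is a clean consequence of the two facts already proved.
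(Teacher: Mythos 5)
Your proposal is correct and follows essentially the same route as the paper: the paper obtains Corollary \ref{t226} precisely by intersecting the events $E_1^{(j)}$ (so $\mathcal{N}_j\le 2\lambda_j$) and $A^{(j)}$ from Facts \ref{t222} and \ref{t223} and using that $Z_j$ increases between losses, so $Z_j(t)\le\Phi_k^{(j)}$ for the next loss index $k$. Your explicit handling of the off-by-one index (the interval after the last loss before time $1$) is a small bookkeeping point the paper glosses over, and your resolution of it is fine.
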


In the next two sections we frequently use
the following fact:
For $r \geq 1$ there exists a constant $c_{12}>0$ such that for any $h \geq 0$
\begin{equation}\label{t183}
  \int\limits_{t=0}^{\infty }
  \left(t+h\right)^r\e{-t}dt
  =
  \left\{
    \begin{array}{ll}
      c_{12}h^r, & \hbox{if $h \geq 1$;} \\
      c_{12}, & \hbox{if $0<h<1$.}
    \end{array}
  \right.
\end{equation}

\subsection{\structure{Expected} increments, assuming no loss}
In this and the next sections we study the increments of $Z_j$
\begin{equation}\label{t178}
  \Delta_{[a,a+h]} Z_j(t):=
  Z_j(a+h)-Z_j(a)
\end{equation}
on a given interval $[a,a+h]$. Since the process is stationary,
\begin{equation}\label{t188}
  \mathbb{E}\left[\Delta_{[a,a+h]} Z_j(t)\right]=0.
\end{equation}
 However, here we assume that there is no event of loss on $\left[a,a+h\right]$. Under this condition in Proposition \ref{t207} below we give an effective upper bound on the expected increment.


\begin{proposition}\label{t207}
 For an arbitrary $a>0$, and $j$ such that $\lambda_j h<1$ we have for some constant $C>0$ that
\begin{equation}\label{t208}
  \mathbb{E}\left[\Delta_{[a,a+h]}Z_j |\
Z_j \mbox{ has no loss on } (a,a+h)
\right] \leq C \cdot \frac{\lambda_j h}{\lambda_{j}^{\theta}}.
\end{equation}
\end{proposition}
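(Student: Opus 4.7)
The plan is to exploit the distributional scaling $Z_j(t) \law \lambda_j^{-\theta} Z_1(\lambda_j t)$ from \eqref{t171} to reduce the problem to the reference process $Z_1$ on an interval of length $h':=\lambda_j h < 1$. With $a':=\lambda_j a$, the scaling yields
\[
\mathbb{E}\left[\Delta_{[a,a+h]}Z_j \mid Z_j \text{ has no loss on }(a,a+h)\right] = \frac{1}{\lambda_j^\theta}\,\mathbb{E}\left[\Delta_{[a',a'+h']}Z_1 \mid Z_1 \text{ has no loss on }(a',a'+h')\right],
\]
so it suffices to prove the bracketed expectation on the right is bounded by $C h'$ for an absolute constant $C$; the prefactor $\lambda_j^{-\theta}$ then supplies exactly the $\lambda_j h/\lambda_j^\theta$ on the right-hand side of \eqref{t208}.

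Since we assume $\Phi^{(1)}_0\sim \pi$, the process $Z_1$ is stationary. Let $\tau_-$ denote the age of the current inter-loss interval at $a'$ and $\Phi$ the value of $Z_1$ right before the most recent loss preceding $a'$. In the stationary regime $\Phi \sim \pi$ and, by the standard fact that the forward and backward recurrence times of a Poisson process at a deterministic time are independent $\mathrm{Exp}(1)$ variables, $\tau_-\sim\mathrm{Exp}(1)$ is independent of $\Phi$. The event ``no loss on $(a',a'+h')$'' constrains only the forward recurrence time, so these distributional statements persist under the conditioning. On the conditioning event we then have the deterministic representation
\[
Z_1(a'+h')-Z_1(a') = g_{\Phi}(\tau_-+h')-g_{\Phi}(\tau_-) \leq h' \cdot \psi\theta \cdot g_{\Phi}(\tau_-+h')^{1-1/\theta},
\]
where the inequality combines the monotonicity of $g_\Phi$ (granted by {\bf (A3)}) with the derivative bound \eqref{t238}.

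It remains to bound $\mathbb{E}\bigl[g_\Phi(\tau_-+h')^{1-1/\theta}\bigr]$ uniformly in $h'\in(0,1)$. Splitting at the threshold $t/\Phi^{1/\theta}=\xi$ and using \eqref{t123}, on one side the monotonicity of $g_1$ gives $g_\Phi(t)\leq \Phi\cdot g_1(\xi)\leq \Phi$, while on the other side \eqref{t145} gives $g_\Phi(t) \leq c_2 t^\theta$. This yields the crude envelope $g_\Phi(t) \leq \Phi + c_2 t^\theta$, whence, by subadditivity of $x\mapsto x^{1-1/\theta}$,
\[
g_\Phi(\tau_-+h')^{1-1/\theta} \leq \Phi^{1-1/\theta} + c_2^{1-1/\theta}(\tau_-+1)^{\theta-1}.
\]
The first term has finite expectation because $\pi$ has exponential tails (Theorem \ref{t131}(b) and Corollary \ref{t165} yield $\mathbb{E}_\pi[\Phi^{1-1/\theta}]<\infty$); the second has finite expectation by \eqref{t183} with $h=1$. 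Multiplying by $h'\psi\theta$ and by the scaling factor $\lambda_j^{-\theta}$ gives the desired bound. The step requiring the most care is the distributional claim in the middle paragraph: one must invoke both the independence of forward/backward recurrence times at a fixed time for a Poisson process and the fact that the chain $\Phi$ depends only on past inter-event times, in order to justify that conditioning on the purely forward event ``no loss on $(a',a'+h')$'' does not disturb the joint law of $(\Phi,\tau_-)$.
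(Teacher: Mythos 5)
Your proposal is correct and follows the same skeleton as the paper's proof: condition on the value of $Z_j$ right before the last loss preceding $a$ (distributed according to the stationary law) and on the backward recurrence time, use the deterministic no-loss growth $g_x$, and reduce everything to the reference process by the self-affine scaling (you invoke \eqref{t171} up front, the paper applies \eqref{t177} and \eqref{0123} inside the integrals — the same reduction). The only real difference is how the resulting integral is estimated: the paper quotes Lemma \ref{t182} for the ``loss before $a$'' contribution and Lemma \ref{t190} for the boundary term with no loss before $a$, whereas you redo this estimate inline via the mean value theorem, the derivative bound \eqref{t238}, the envelope $g_\Phi(t)\leq \Phi+c_2t^\theta$ coming from \eqref{t123} and \eqref{t145}, and then \eqref{t183} together with the moment bound from Theorem \ref{t131}(b)/Corollary \ref{t165}. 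This is a legitimate shortcut: scaling first and using $\lambda_j h<1$ means you only ever need the $h'<1$ regime, so you avoid the $h\geq 1$ case analysis of those lemmas, and since your final bound splits into a sum of a $\Phi$-term and a $\tau_-$-term you only need the marginal laws, not the joint independence. Two small points of care: the backward recurrence time of a Poisson process started at $0$ is the truncated variable $\min(\mathrm{Exp}(1),a')$ with an atom at $a'$ (the ``no loss before $a'$'' case, which the paper treats as the separate term $I_2$); your argument survives because the integrand is increasing in $\tau_-$ and the initial value is also $\pi$-distributed, but the statement ``independent $\mathrm{Exp}(1)$ variables'' should be phrased this way. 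Your justification that the conditioning on the purely forward event does not disturb the law of $(\Phi,\tau_-)$ is exactly the memorylessness argument the paper uses implicitly, so no gap there relative to the paper.
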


If $Z_1(t)$ has no loss on the interval $[a,a+h]$, then the distribution of the time of the last loss before $a$ is exponential. Assuming that the value of $Z_1$ \emph{right before the last loss preceding $a$} was  $x$ then the conditional expectation of the increment of $Z_1$ on this interval  is $\int\limits_{t=0}^{a }
\left(
g_x(t+h)-g_x(t)
\right)\e{-t}dt$,  assuming  that there was a loss before $a$ at all. Otherwise the increment is $g_x(a+h)-g_x(a)$, where $x=Z_1(0)$  and this happens with probability $\e{-a}$.  This observation motivates the following two lemmas about the first and second moment of the (conditional) increment.

\begin{lemma}\label{t182}
  Let $\ell =1,2$ and we define
$$
 I_{x,h,\ell}:=
\int\limits_{t=0}^{\infty }
\left(
g_x(t+h)-g_x(t)
\right)^{\ell }\e{-t}dt
$$
Then there exists a constant $c_{17}>0$ such that
\begin{equation}\label{t304}
 I_{x,h,\ell} \leq
c_{17}
\max\left\{x^{\ell (1-1/\theta)+1/\theta},1\right\} \cdot
\max\left\{
h^\ell ,h^{\ell \cdot  \theta}
\right\}
\end{equation}
\end{lemma}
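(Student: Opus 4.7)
The strategy is to establish a single pointwise upper bound on the increment $g_x(t+h)-g_x(t)$, valid for all $x,t,h>0$, then raise it to the power $\ell$ and integrate against $e^{-t}\,dt$ using \eqref{t183}. The key technical input is a uniform version of assumption \textbf{(A2a)} of Definition~\ref{z16}. Combining \textbf{(A2a)} with the self-affinity \eqref{0123} (a short chain-rule calculation starting from $g_x(t)^{1/\theta}=x^{1/\theta}g_1(t/x^{1/\theta})^{1/\theta}$) gives that $\frac{d}{dt}g_x(t)^{1/\theta}\le \psi$ \emph{uniformly in $x>0$}. Integrating from $0$ to $t+h$ and using $g_x(0)=\eta x$ with $\eta<1$ yields the Lipschitz-type bound $g_x(t+h)^{1/\theta}\le x^{1/\theta}+\psi(t+h)$.

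Next, the mean value theorem applied to $s\mapsto s^\theta$ together with $g_x(t+h)^{1/\theta}-g_x(t)^{1/\theta}\le \psi h$ (obtained by integrating the same derivative bound over $[t,t+h]$) gives
$$0\le g_x(t+h)-g_x(t)\le \theta\psi\, h\cdot g_x(t+h)^{1-1/\theta}.$$
Inserting the bound on $g_x(t+h)^{1/\theta}$ and using the elementary inequality $(a+b)^r\le C_\theta(a^r+b^r)$ for $r\ge 0$ (with $r=\theta-1$ and $r=1-1/\theta$), the core pointwise estimate
$$g_x(t+h)-g_x(t)\le C\,h\,\bigl(x^{1-1/\theta}+(t+h)^{\theta-1}\bigr)$$
emerges. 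Raising to the power $\ell\in\{1,2\}$ and using $(A+B)^\ell\le 2^\ell(A^\ell+B^\ell)$ then gives
$$(g_x(t+h)-g_x(t))^\ell\le C\,h^\ell\bigl(x^{\ell(1-1/\theta)}+(t+h)^{\ell(\theta-1)}\bigr).$$

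Integrating the last display against $e^{-t}\,dt$ and invoking \eqref{t183} with exponent $r=\ell(\theta-1)\ge 0$, one obtains
$$I_{x,h,\ell}\le C\,h^\ell\, x^{\ell(1-1/\theta)}+C\,h^\ell\max\{h^{\ell(\theta-1)},1\}=C\,h^\ell\, x^{\ell(1-1/\theta)}+C\max\{h^{\ell\theta},h^\ell\}.$$
The lemma follows from a short case analysis on whether $x\ge 1$ or $x<1$: the second summand is already of the desired shape, while for the first summand one uses $h^\ell\le\max\{h^\ell,h^{\ell\theta}\}$ together with $x^{\ell(1-1/\theta)}\le \max\{x^{\ell(1-1/\theta)+1/\theta},1\}$ (for $x\ge 1$ by monotonicity, since the extra $x^{1/\theta}$ factor is $\ge 1$; for $x\le 1$ because $\ell(1-1/\theta)\ge 0$ when $\theta\ge 1$, so $x^{\ell(1-1/\theta)}\le 1$). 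The main point requiring care is upgrading the scalar assumption \textbf{(A2a)} to the uniform-in-$x$ derivative bound $\frac{d}{dt}g_x(t)^{1/\theta}\le \psi$; once this is in hand the argument is routine, and no serious obstacle is anticipated.
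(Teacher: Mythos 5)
Your argument is correct, but it follows a genuinely different route from the paper. The paper's proof applies the mean value theorem to $g_1$ and then splits the integral $I_{x,h,\ell}$ into three pieces, over $[0,\xi x^{1/\theta}-h]$, $[\xi x^{1/\theta}-h,\xi x^{1/\theta}]$ and $[\xi x^{1/\theta},\infty)$, bounding $g_1'$ differently on each piece (by $\max_{[0,\xi]}g_1'$ on the first, and via \eqref{t167} together with \eqref{t145} on the last two); the first piece is where the factor $x^{\ell(1-1/\theta)+1/\theta}$ arises, since the integrand is bounded crudely and multiplied by the interval length $\xi x^{1/\theta}$. You instead derive a single pointwise bound valid for all $t$: the uniform-in-$x$ Lipschitz bound $\frac{d}{dt}g_x(t)^{1/\theta}\le\psi$ (which is legitimate — it is exactly the content of \eqref{t238} rewritten, obtained from {\bf(A2a)} and \eqref{t123} by the chain rule), combined with $g_x(0)=\eta x\le x$ and the mean value theorem for $s\mapsto s^\theta$, giving $g_x(t+h)-g_x(t)\le C\,h\,(x^{1-1/\theta}+(t+h)^{\theta-1})$, after which one integration using \eqref{t183} suffices. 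This avoids the case decomposition entirely and in fact yields the slightly sharper estimate $I_{x,h,\ell}\le C\bigl(h^\ell x^{\ell(1-1/\theta)}+\max\{h^\ell,h^{\ell\theta}\}\bigr)$, which implies \eqref{t304} by the final elementary comparison you give. The only cosmetic point is your appeal to \eqref{t183} with exponent $r=\ell(\theta-1)$, which may be smaller than $1$; the paper states \eqref{t183} for $r\ge1$ (and uses it the same way), but the upper-bound version for $0\le r<1$ is immediate from $(t+h)^r\le h^r(t+1)^r$ when $h\ge1$ and $(t+h)^r\le(t+1)^r$ when $h<1$, so this is not a gap.
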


\begin{proof}

Fix $x,h$ and $\ell $.
By the mean value theorem, for every $t$ we pick a $t'\in\left(\frac{t}{x^{1/\theta}}
,
\frac{t+h}{x^{1/\theta}}
\right)$ such that
\begin{equation}\label{t186}
 g_1\left(\frac{t+h}{x^{1/\theta}}\right)
-
g_1\left(\frac{t}{x^{1/\theta}}\right)
=g'_1(t') \cdot \frac{h}{x^{1/\theta}}
\end{equation}
Then we can write
\[\begin{aligned}
  I_{x,h,\ell}&=
 \int\limits_{t=0}^{\infty }
x^\ell \left(
g_1\left(\frac{t+h}{x^{1/\theta}}\right)
-
g_1\left(\frac{t}{x^{1/\theta}}
\right)\right)^\ell \e{-t}dt
 \\
 &=  \underbrace{\int\limits_{t=0}^{\xi x^{1/\theta}-h}
}_{:=I_1}
+
\underbrace{\int\limits_{\xi x^{1/\theta}-h}^{\xi x^{1/\theta}}
}_{:=I_2}
+
\underbrace{\int\limits_{\xi x^{1/\theta}}^{\infty },
}_{:=I_3}
\end{aligned}\]
where we have decomposed the integral into the integral on three disjoint intervals.

\textit{The estimate of $I_1$}:
For $t \leq \xi x^{1/\theta}-h$ we have $t' \leq \frac{t+h}{x^{1/\theta}}<\xi$. Let $R:=\max\limits_{u\in[0,\xi]} g'_1(u)$. \emph{So, it is easy to see that for some constant $C>0$,}
$$
I_1
 \leq
x^\ell  R^\ell h^\ell x^{-\ell /\theta}\xi
x^{1/\theta }
 \leq C \cdot
h^\ell x^{\ell (1-1/\theta)+1/\theta}.
$$
Thus, $I_1$ is at most the right hand side of \eqref{t304}.

\textit{The estimate of $I_2$}: Using \eqref{t186} first we can apply the upper bound on $g_1'$ in \eqref{t167} to obtain:
\[\begin{aligned}
I_2 &\leq \int\limits_{\xi x^{1/\theta}-h}^{\xi x^{1/\theta}}
x^\ell \left(
g'_1\left(t'\right)\frac{h}{x^{1/\theta}}
\right)^\ell \e{-t}dt\\
& \leq x^{\ell (1-1/\theta)} h^{\ell }\psi^\ell \theta^\ell
\int\limits_{\xi x^{1/\theta}-h}^{\xi x^{1/\theta}}
g_{1}^{\ell (\theta-1)/\theta}
(t')\e{-t}dt.
  \end{aligned} \]
  Now we use that $g_1$ is increasing, thus we can use the upper bound on $t'$ from before \eqref{t186} and then apply \eqref{t145} to obtain
   \[ \begin{aligned}
    I_2&\leq x^{\ell (1-1/\theta)} h^{\ell }\psi^\ell \theta^\ell
\int\limits_{\xi x^{1/\theta}-h}^{\xi x^{1/\theta}}
g_{1}^{\ell (\theta-1)/\theta}
\left(\frac{t+h}{x^{1/\theta}}\right)\e{-t}dt\\
 &\leq
 h^{\ell }\psi^ \ell \theta^\ell
\int\limits_{\xi x^{1/\theta}-h}^{\xi x^{1/\theta}}C
(t+h)^{\ell (\theta-1)}\e{-t}dt.
\end{aligned} \]
By \eqref{t183} we have shown
$$
I_2 \leq    \left\{
                              \begin{array}{ll}
                               c_{17} h^{\ell \theta}, & \hbox{if $h \geq 1$;} \\
                               c_{17}  h^\ell, & \hbox{if $h\in (0,1)$.}
                              \end{array}
                            \right.
$$
Note that  the right hand side of \eqref{t185} is an upper bound on the rhs.

\textit{The estimate of $I_3$}:
First we apply the Lagrange mean value theorem
as in \eqref{t186}. It will be important that  $t'>\xi$ since $t>\xi x^{1/\theta}$. Then we use \eqref{t167} and \eqref{t145} in this order to obtain the following upper bound:
\begin{align}\label{t187}
   g_1\left(\frac{t+h}{x^{1/\theta}}\right)
&-
g_1\left(\frac{t}{x^{1/\theta}}\right)
 =
g'(t') \cdot \frac{h}{x^{1/\theta}}
 \leq
\psi\theta\cdot \frac{h}{x^{1/\theta}} \cdot
(g_1(t'))^{1-1/\theta}
 \\
 &\leq
\psi\theta\cdot \frac{h}{x^{1/\theta}} \cdot
\left(\frac{t+h}{x^{1/\theta}}\right)^{\theta-1}=
\psi\theta\cdot x^{-1}h(t+h)^{\theta-1}.
\end{align}
Combining this with \eqref{t123} and \eqref{t183} we obtain that
$$
I_3 \leq \mathrm{const} \cdot \left\{
                              \begin{array}{ll}
                               h^{\ell \theta}, & \hbox{if $h \geq 1$;} \\
                                h^\ell, & \hbox{if $h\in (0,1)$.}
                              \end{array}
                            \right.
$$
which is not greater than the right hand side of  \eqref{t185}.
\end{proof}
A modification of Lemma \ref{t182} is the following lemma.
\begin{lemma}\label{t190}
  Let $\ell =1,2$  and $b>0$ be arbitrary. We define
\begin{equation}\label{t192}
  J_{x,h,\ell,b}:=
\left(
g_x(b+h)-g_x(b)
\right)^{\ell } \cdot \e{-b}.
\end{equation}
Then there exists a constant $c_{18}>0$ such that
\begin{equation}\label{t185}
 J_{x,h,\ell,b} \leq
c_{18}
\max\left\{x^{\ell (1-1/\theta)},1\right\} \cdot
\left\{
  \begin{array}{ll}
    h^\ell, & \hbox{if $h<1$;} \\
   h^{\ell \cdot  \theta} , & \hbox{if $h>1$.}
  \end{array}
\right.
\end{equation}
\end{lemma}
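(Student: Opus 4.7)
The plan is to follow the structure of the proof of Lemma~\ref{t182} very closely, noting that evaluating at the single point $b$ (instead of integrating over $t$) removes the extra factor of $x^{1/\theta}$ which in Lemma~\ref{t182} arose from the length $\xi x^{1/\theta}$ of the integration region. This is precisely why the bound in \eqref{t185} carries $x^{\ell(1-1/\theta)}$ instead of $x^{\ell(1-1/\theta)+1/\theta}$.

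By the scaling identity \eqref{t123} we have $g_x(b+h) - g_x(b) = x\bigl[g_1((b+h)/x^{1/\theta}) - g_1(b/x^{1/\theta})\bigr]$, and by the mean value theorem there exists $t' \in (b/x^{1/\theta}, (b+h)/x^{1/\theta})$ with the bracketed difference equal to $g_1'(t') \cdot h/x^{1/\theta}$, so $g_x(b+h)-g_x(b) = h\, x^{1-1/\theta}\, g_1'(t')$. I would then split into three sub-cases depending on the position of $[b, b+h]$ relative to $\xi x^{1/\theta}$, mirroring the sub-integrals $I_1, I_2, I_3$ of Lemma~\ref{t182}.

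In the \emph{flat} sub-case $(b+h)/x^{1/\theta} \le \xi$, the point $t' \le \xi$, so $g_1'(t') \le R := \max_{[0,\xi]} g_1' < \infty$ and the increment is at most $R h\, x^{1-1/\theta}$, giving a contribution to $J_{x,h,\ell,b}$ of order $h^\ell x^{\ell(1-1/\theta)}$. In the \emph{polynomial} sub-case $b/x^{1/\theta} \ge \xi$, we have $t' \ge \xi$ and the chain of inequalities \eqref{t167} followed by \eqref{t145} gives $g_1'(t') \le c_2^{1-1/\theta}\psi\theta\, (t')^{\theta-1}$; exactly as in the computation at \eqref{t187}, the powers of $x$ cancel and the increment is bounded by $C h (b+h)^{\theta-1}$ with no $x$-dependence. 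The intermediate sub-case is handled by splitting the increment at the crossover point $\xi x^{1/\theta}$ and applying the previous two estimates to the two pieces, each of length at most $h$.

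Finally I would absorb $e^{-b}$ and separate the regimes $h<1$, $h>1$. The quantity $(b+h)^{\ell(\theta-1)} e^{-b}$, which is the potentially problematic factor coming from the polynomial sub-case, is bounded by a constant when $h \le 1$ (since $(b+1)^{\ell(\theta-1)}e^{-b}$ is bounded in $b$), and by $C \max(1, h^{\ell(\theta-1)})$ when $h > 1$, as seen by splitting into $b \le h$ (use $(b+h)\le 2h$) and $b > h$ (use that $u^{\ell(\theta-1)}e^{-u/2}$ is bounded); multiplying through by $h^\ell$ reproduces exactly the dichotomy $h^\ell$ vs.\ $h^{\ell\theta}$ appearing in \eqref{t185}. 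The $\max\{x^{\ell(1-1/\theta)},1\}$ in the statement then absorbs both the flat sub-case (dominant when $x\ge 1$) and the $x$-free polynomial sub-case (dominant when $x<1$). I expect the main nuisance to be not conceptual but organizational: verifying that all three sub-cases yield bounds compatible with a \emph{single} constant $c_{18}$, and carefully tracking the case $x < 1$ where $x^{\ell(1-1/\theta)}\le 1$ so that the polynomial-regime bound (without $x$) is the one that controls $J_{x,h,\ell,b}$.
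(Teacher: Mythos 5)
Your proposal is correct and follows essentially the same route as the paper: the paper's proof of Lemma~\ref{t190} simply says to repeat the argument of Lemma~\ref{t182}, splitting into the three cases $b<\xi x^{1/\theta}-h$, $\xi x^{1/\theta}-h<b<\xi x^{1/\theta}$, $b>\xi x^{1/\theta}$, which is exactly your decomposition. Your explicit pointwise bound on $(b+h)^{\ell(\theta-1)}e^{-b}$ (replacing the role of \eqref{t183}) and the resulting loss of the factor $x^{1/\theta}$ from the integration length are precisely the details the paper leaves implicit.
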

\begin{proof}
 This can be proved exactly as we proved Lemma \ref{t182}. More precisely,  we  separate three cases according to $b<\xi x^{1/\theta}-h$, $\xi x^{1/\theta}-h<b<\xi x^{1/\theta}$ and $b>\xi x^{1/\theta}$. Using the same arguments as above, we obtain that
there exists a constant $c_{30}>0$ such that
\begin{equation}\label{z39}
   \left(
g_x(b+h)-g_x(b)
\right)^{\ell } \cdot \e{-b} \\
   \leq
   c_{30}
\max\left\{x^{\ell (1-1/\theta)},1\right\} \cdot
\left\{
  \begin{array}{ll}
    h^\ell, & \hbox{if $h<1$;} \\
   h^{\ell \cdot  \theta} , & \hbox{if $h>1$.}
  \end{array}
\right.
\end{equation}

\end{proof}

We are ready to prove Proposition \ref{t207}.
\begin{proof}[Proof of Proposition \ref{t207}] Recall that we write $\phi_j$ for the density of the stationary distribution of $Z_j(t)$, and that the time of the last loss before time $a$ has a truncated exponential distribution. Let us write $P_{a}^{(j)}$ for the number of losses (points of the underlying Poisson point process) of $Z_j$ on the interval $[a, a+h]$. We obtain
\begin{align*}
\mathbb{E}\left[\Delta_{[a,a+h]}Z_j|
P_a^{(j)}=0
\right]
&\!\leq\!
\underbrace{ \int\limits_{x=0}^{\infty }
 \lambda_j
\int\limits_{t=0}^{a}
 \left(
 g_x\left(t+h\right)-g_x(t)
 \right)^\ell \e{-\lambda_jt}
 dt \varphi_j(x)dx}_{:=I_1}\\
 &\ +
 \underbrace{ \int\limits_{x=0}^{\infty }
\left(g_x(a+h)-g_x(a)\right)^\ell
 \e{-\lambda_i a}
  \varphi_j(x)dx}_{:=I_2}
\end{align*}
We apply  \eqref{t177} on $\varphi_j$ and then  \eqref{0123} in this order. Then  we make the change or variables in $I_1$ in the most natural way to obtain the upper bound for $I_1$ from Lemma \ref{t182} and Part (b) of Theorem \ref{t131}. We obtain the upper bound for $I_2$  immediately by Lemma \ref{t190} and also Part (b) of Theorem \ref{t131}.
\end{proof}

\subsection{Variance of the increments}

In this section our aim is to compute the variance of the increments. For the model of TCP RENO this was done in \cite{Barral2004}. Then  we reformulate  a very simple but  useful inequality which was introduced \cite[Lemma VI.1]{Rams2012}. We simply call it Markov inequality and just as in \cite{Rams2012} we use it frequently later on.

 \begin{proposition}\label{t180}
   For every $a>0$ we have
\begin{equation}\label{t179}
\mathrm{Var}\left(  \Delta_{[a,a+h]} Z_j(t)\right)
   \leq
   K_0\min \left\{\frac{1}{\lambda_{j}^{2\theta}},
\frac{h}{\lambda_{j}^{2\theta-1}}
\right\}.
\end{equation}
\end{proposition}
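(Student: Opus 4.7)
My plan is to exploit the self-affine scaling of $Z_j$ in distribution. Since by \eqref{t171} we have $Z_j(t) \stackrel{d}{=} \lambda_j^{-\theta} Z_1(\lambda_j t)$ and the reference process $Z_1$ is stationary (as $\Phi_0^{(1)} \sim \pi_1$), the two variances are related by $\mathrm{Var}(\Delta_{[a,a+h]} Z_j) = \lambda_j^{-2\theta}\,\mathrm{Var}(\Delta_{[0,s]} Z_1)$ with $s := \lambda_j h$. The proposition therefore reduces to showing
\[
\mathrm{Var}(\Delta_{[0,s]} Z_1) \leq K \min\{1, s\}, \qquad s > 0.
\]

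The uniform bound by a constant follows at once: $\mathrm{Var}(\Delta) \leq \mathbb{E}[\Delta^2] \leq 2\mathbb{E}[Z_1(s)^2] + 2\mathbb{E}[Z_1(0)^2] = 4 \mathbb{E}[Z_1(0)^2]$ by stationarity, and the exponential moment bound from Theorem \ref{t131}(b) implies that all polynomial moments of $Z_1(0)$ under $\pi_1$ are finite.

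For the bound $Ks$ in the regime $s \leq 1$, I plan to decompose
\[
\Delta = \Delta^c + \Delta^j, \qquad \Delta^c := \int_0^s Z_1'(t)\,dt \geq 0, \qquad \Delta^j := -(1-\eta)\sum_{t_k \in (0,s]} \Phi_k^{(1)} \leq 0,
\]
into the continuous growth between losses and the sum of (negative) jumps at losses, and control each piece separately. For $\Delta^c$, plugging the derivative bound \eqref{t238}, $Z_1'(t) \leq \psi\theta\, Z_1(t)^{1-1/\theta}$, and applying Cauchy--Schwarz to the time integral yields $(\Delta^c)^2 \leq \psi^2 \theta^2 s \int_0^s Z_1(t)^{2-2/\theta}\,dt$; taking expectations and using stationarity plus polynomial moments gives $\mathbb{E}[(\Delta^c)^2] = O(s^2)$. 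For $\Delta^j$, I would apply Campbell's formula together with Slivnyak's theorem for the driving Poisson process of $Z_1$: since the reduced Palm distribution of a Poisson process at a loss time coincides with the unconditional distribution, $\mathbb{E}\big[\sum_k (\Phi_k^{(1)})^2\big] = s \cdot \mathbb{E}_{\pi_1}[\Phi^2]$ exactly, while the cross-term $\mathbb{E}\big[\sum_{k\neq l} \Phi_k^{(1)}\Phi_l^{(1)}\big]$ is bounded by $s^2 \mathbb{E}_{\pi_1}[\Phi^2]$ via Cauchy--Schwarz on the joint law of two pre-loss values (with the second one driven by the process after insertion of a loss at the first). Hence $\mathbb{E}[(\Delta^j)^2] = O(s)$ for $s \leq 1$, and combining with the continuous part finishes the proof.

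The main technical obstacle will be the \emph{sharp} $O(s)$ bound on the jump contribution. A naive estimate via $|\Delta^j| \leq (1-\eta) N \cdot M$, where $N := \#\{t_k \in (0,s]\}$ and $M := \max_{[0,s]} Z_1$, followed by Cauchy--Schwarz $\mathbb{E}[N^2 M^2] \leq \sqrt{\mathbb{E}[N^4] \mathbb{E}[M^4]}$, only produces $O(\sqrt{s})$, since $\mathbb{E}[N^4] = O(s)$ for small $s$. It is precisely the Palm calculus for Poisson that bypasses this loss: the diagonal contribution depends on $s$ only linearly, and the cross contribution is of order $s^2$, so the correct linear-in-$s$ order of $\mathbb{E}[(\Delta^j)^2]$ is retained.
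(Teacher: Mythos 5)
Your argument is correct in outline but follows a genuinely different route from the paper's proof. The paper conditions on the event $R_j$ that $Z_j$ has at least one loss in $[a,a+h]$: on $R_j$ it combines $\mathbb{P}(R_j)\le\min\{1,\lambda_j h\}$ with a uniform second-moment bound coming from Theorem \ref{t131}, while on $R_j^c$ the increment is deterministic given the age and the pre-loss value, and its square is integrated against the truncated-exponential age distribution and the stationary density, reducing by the scaling \eqref{t177} to the $\lambda=1$ integrals of Lemmas \ref{t182} and \ref{t190}. You instead scale once via \eqref{t171} to $Z_1$ on $[0,s]$ with $s=\lambda_j h$, and split the increment pathwise into drift plus jumps: the drift is $O(s^2)$ in $L^2$ by the derivative bound \eqref{t238} and Cauchy--Schwarz, and the jump part is $O(s)$ by Campbell/Palm calculus. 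Your route buys a cleaner conceptual separation and bypasses the explicit integral estimates of Lemmas \ref{t182} and \ref{t190} for the variance; the paper's route is more elementary, using only already-established integral lemmas and no Palm theory, and is organized so that the same lemmas also serve Proposition \ref{t207}. One caveat in your sketch: the cross-term bound via the two-point Palm measure is loose, since after inserting a loss the pre-loss value at the other point is no longer exactly $\pi$-distributed, and likewise the ``exact'' identity $\mathbb{E}\bigl[\sum_k\Phi_k^2\bigr]=s\,\mathbb{E}_{\pi}[\Phi^2]$ presumes the two-sided stationary version. Both points are easily repaired without Palm mark identification: writing $\sum_{T_k\le s}\Phi_k=\int_0^s Z_1(t^-)\,N(dt)$ with $Z_1(t^-)$ predictable and compensator $dt$, the decomposition $N(dt)=\bigl(N(dt)-dt\bigr)+dt$ yields $\mathbb{E}\bigl[\bigl(\sum_{T_k\le s}\Phi_k\bigr)^2\bigr]\le 2\int_0^s\mathbb{E}[Z_1(t)^2]\,dt+2s\int_0^s\mathbb{E}[Z_1(t)^2]\,dt\le 2K_6\,(s+s^2)$ with $K_6:=\sup_t\mathbb{E}[Z_1(t)^2]$ (the constant the paper also uses), which is exactly the $O(s)$ you need for $s\le1$ and covers your diagonal term as well.
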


\begin{proof} Recall that $P_{a}^{(j)}$ stands for the number of losses (points of the underlying Poisson point process) of $Z_j$ on the interval $[a, a+h]$.
  Let
$
R_j:=\left\{
P_{a}^{(j)} \ge 1
\right\}.
$
Since the process $Z_j$ is stationary,  we can write
\begin{align*}
  \mathrm{Var}\left(\Delta_{[a,a+h]} Z_j\right)&
=
\mathbb{E}\left[\Delta_{[a,a+h]} Z_{j}^{2}\right]
=
\mathbb{E}\left[
\left(Z_j(a+h)-Z_j(a)\right)^2
\right]\\
& \leq  \underbrace{\mathbb{E}\left[
\left(Z_j(a+h)-Z_j(a)\right)^2 | R_j
\right]\mathbb{P}\left(R_j\right)}_{:=A}\\
&\ +
\underbrace{\mathbb{E}\left[
\left(Z_j(a+h)-Z_j(a)\right)^2 | R^c_j
\right]\mathbb{P}\left(R^c_j\right)}_{:=B}.
\end{align*}
By the definition of $R_j$:
\begin{equation}\label{t189}
  \mathbb{P}\left(R_j\right)=
1-\mathbb{P}\left(R_{j}^{c}\right)=
 1-\e{-\lambda_j h} \leq h\lambda_j.
\end{equation}
Using \eqref{t171} we can bound term $A$ as follows:
\begin{equation}\label{t194}
 A \leq  \frac{1}{\lambda_{j}^{2\theta}} \cdot
\mathbb{E}\left[
Z_1(\lambda_j(a+h))^2+Z_1(\lambda_ja)
^2 |R_j
\right] \cdot \mathbb{P}\left(R_j\right)
\end{equation}
First we observe that
 Theorem \ref{t131} implies that
there is a constant $K_6$ such that
for every $t   \geq  0$  we have
$$
\mathbb{E}\left[Z_1(t)^2\right]<K_6.
$$
Hence, $A \leq 2K_6/(\lambda_{j}^{2\theta}) \min\left\{1,h\lambda_j\right\}$. So we have verified that
\begin{equation}\label{t193}
  A \leq 2K_6\min \left\{\frac{1}{\lambda_{j}^{2\theta}},
\frac{h}{\lambda_{j}^{2\theta-1}}.
\right\}
\end{equation}
To estimate term $B$
we introduce $F_{C_j}(t)$ which is the cumulative distribution function of the current lifetime $C_i(a)$ (the time between $a$ and the last loss before $a$) for a $\mathrm{Poisson}(\lambda_j)$ process, a truncated exponential distribution. That is,
 \begin{equation}\label{t191}
F_{C_j}(t):=\mathbb{P}\left(C_j(a) \leq t\right)
=\left\{
   \begin{array}{ll}
     1-\e{-\lambda_jt}, & \hbox{if $t<a$;} \\
     1, & \hbox{if $t \geq a$ .}
   \end{array}
 \right.
\end{equation}

Set $u(x,t,h):=\left(g_x(t+h)-g_x(t)\right)^2$. Using that $\varphi_j(x)$ is the density of the stationary distribution $\pi_j(x)$.
\[ \begin{aligned}
  B&= \mathbb{P} \left(R_{j}^{c}\right)\cdot \mathbb{E}\left[u(x,t,h)| R_{j}^{c}\right]=
\e{-\lambda_jh}
\int\limits_{x=0}^{\infty }
\int\limits_{t=0}^{a}
u(x,t,h)d F_{C_j}(t)d\pi_j(x)\\
&=
\e{-\lambda_jh}
\int\limits_{x=0}^{\infty }
\left[\,
\int\limits_{t=0}^{a}
u(x,t,h)\e{-\lambda_j t}dt+u(x,a,h)\e{-\lambda_j a}
\right]\varphi(x \cdot \lambda_{j}^{\theta})\lambda_{j}^{\theta}dx.
\end{aligned}\]
We switch to the stationary density $\varphi:=\varphi_1$ using \eqref{t177} as well as use the self-similar property of $g_x(t)$ as in \eqref{t123} to transform $u$ we write
\[ \begin{aligned}
B&=\e{-\lambda_jh}
\int\limits_{x=0}^{\infty }
\left[\,
\int\limits_{t=0}^{a}
u\left(
\frac{y}{\lambda_{j}^{\theta}},t,h
\right)\e{-\lambda_jt}dt+\e{-\lambda_ja}
u\left(\frac{y}{\lambda_{j}^{\theta}},t,h\right)
\right]\varphi(y)dy\\
&=
\frac{\e{-\lambda_jh}}{\lambda_{j}^{2\theta}}\int\limits_{y=0}^{\infty }\!
\left[\,
\int\limits_{t=0}^{a}
u(y,\lambda_jt,\lambda_jh)\e{-\lambda_jt}dt
+
\e{-\lambda_ja}
u\left(y,\lambda_ja,\lambda_jh\right)
\right]\varphi(y)dy\\
&\leq\!
 \frac{\e{-\lambda_jh}}{\lambda_{j}^{2\theta}\lambda_{\min}}\!
\int\limits_{y=0}^{\infty }
\left[
\int\limits_{v=0}^{\lambda_ja}
u\left(y,v,\lambda_jh\right)\e{-v}dv
\!+\!
u\left(y,\lambda_ja,\lambda_jh\right)\e{-\lambda_ja}
\right]\!\varphi(y)dy
\end{aligned}\]
Using the notation of Lemmas \ref{t182} and \ref{t190} and then using the assertions of Lemmas \ref{t182} and \ref{t190} we continue as follows
\[\begin{aligned}
B&=
\frac{\e{-\lambda_jh}}{\lambda_{j}^{2\theta}}
\frac{1}{\lambda_{\min}}\cdot\int\limits_{y=0}^{\infty }
\left(I_{y,\lambda_jh,2}+J_{y,\lambda_jh,2,\lambda_ja}\right)
d\pi(y)\\
& \leq
\frac{\e{-\lambda_jh}}{\lambda_{j}^{2\theta}}
\frac{1}{\lambda_{\min}}
\cdot\int\limits_{y=0}^{\infty }
\left(
c_{19}\max\left\{y^{2(1-1/\theta)+1/\theta},1\right\}\max\left\{(\lambda_jh)^{\ell },
(\lambda_jh)^{\ell\theta }\right\}
\right)
d\pi(y)
\\
 &\leq
\frac{1}{\lambda_{j}^{2\theta}}
\frac{1}{\lambda_{\min}}\cdot
 \lambda_jh \cdot\underbrace{\e{-\lambda_jh} \cdot
\left((\lambda_jh)+
(\lambda_jh)^{2\theta-1}\right)}_{:=q_j(\lambda_jh)}
\\
& \leq K_{23} \cdot
\min\left\{ \frac{1}{\lambda_{j}^{2\theta}},
\frac{h}{\lambda_{j}^{2\theta-1}}
\right\},
\end{aligned}\]
where $K_{23}$ is  $\max_{x}q_j(x)$ if $\lambda_jh \leq 1$ and $K_{23}$ is  $\max_{x \geq 1}xq_j(x)$ if $\lambda_jh>1$.
We choose $K_0$ as the maximum of $2K_6$ and
$K_{23}$ to complete the proof.
\end{proof}

An immediate corollary of Markov's inequality is the following assertion that we will call Markov inequality in the note.

\begin{lemma}\label{o72}[Markov's inequality]
Given  $n\in\mathbb{N}$ and events $A_1, \dots ,A_n$ such that  for all $i=1, \dots ,n$ we have
$$\mathbb{P}\left(A_i\right) \leq \hat{p}$$
 for some $\hat{p}\in[0,1]$. Then for any $N>1$
\begin{equation}\label{z40}
  \mathbb{P}\left(\#\left\{k \leq n:A_k \text{ happens }\right\}>N \cdot n\hat{p}\right)<\frac{1}{N}.
\end{equation}
\end{lemma}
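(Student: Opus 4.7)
The plan is to apply the classical Markov inequality to the counting random variable $X := \sum_{k=1}^{n} \mathbf{1}_{A_k}$, which records how many of the events $A_1,\ldots,A_n$ occur.

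First I would compute, using linearity of expectation together with the hypothesis $\mathbb{P}(A_k) \le \hat{p}$ for each $k$, that
\begin{equation*}
\mathbb{E}[X] \;=\; \sum_{k=1}^{n}\mathbb{P}(A_k) \;\le\; n\hat{p}.
\end{equation*}
(The case $\hat{p}=0$ is trivial, since then all $A_k$ have probability zero and the event in \eqref{z40} has probability zero; so I may assume $\hat{p}>0$.) Applying Markov's inequality to the nonnegative random variable $X$ at level $Nn\hat{p}$ then yields
\begin{equation*}
\mathbb{P}(X \ge Nn\hat{p}) \;\le\; \frac{\mathbb{E}[X]}{Nn\hat{p}} \;\le\; \frac{1}{N}.
\end{equation*}

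The only mildly delicate point is to obtain the \emph{strict} inequality $<1/N$ asserted in \eqref{z40} rather than the non-strict bound produced by the naive Markov step. I would handle this by exploiting that $X$ is integer valued, taking values in $\{0,1,\dots,n\}$. If $\mathbb{P}(X > Nn\hat{p})=0$ the claim is immediate; otherwise set $m := \lfloor Nn\hat{p}\rfloor + 1$, so that $m > Nn\hat{p}$ and the event $\{X>Nn\hat{p}\}$ coincides with $\{X\ge m\}$. The tail-sum representation
\begin{equation*}
\mathbb{E}[X] \;=\; \sum_{k=1}^{\infty}\mathbb{P}(X\ge k) \;\ge\; m\,\mathbb{P}(X\ge m)
\end{equation*}
then gives $\mathbb{P}(X>Nn\hat{p}) \le \mathbb{E}[X]/m \le n\hat{p}/m < n\hat{p}/(Nn\hat{p}) = 1/N$, as required.

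There is essentially no obstacle here: the lemma is a one-liner once the indicator-sum $X$ is introduced, and integer-valuedness supplies the small amount of slack needed for strictness. Note also that the statement requires no independence among the $A_k$'s, which is why Markov (rather than a Chernoff-type bound as used earlier in the paper for Poisson tail probabilities) is the appropriate tool.
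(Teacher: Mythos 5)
Your proof is correct and follows exactly the route the paper intends: the paper states this lemma without proof as an ``immediate corollary of Markov's inequality,'' and your argument (apply Markov to the indicator sum $X=\sum_k \mathbf{1}_{A_k}$ with $\mathbb{E}[X]\le n\hat p$) is precisely that immediate argument. Your extra care with integer-valuedness to upgrade $\le 1/N$ to the strict inequality $<1/N$ is a valid refinement of a point the paper glosses over.
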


\section{Technical Lemmas}

Fix an $\ell $ and a $0 \leq k<2^\ell $. Here as well as throughout the paper we write $h:=2^{-\ell }$.
For an $r>0$ to be specified later, we divide the increments $\Delta_{\ell }^{k} Z_j$ of $Z_j$ on the $2^\ell $-mesh intervals
 (defined in \eqref{z3}) into two groups depending on their intensity:
\begin{equation}\label{o122}
\underline{T}^{k,\ell }_{r}:=\sum\limits_{\lambda^{-\theta} _j<r}\Delta_{\ell }^{k} Z_j,\quad
\overline{T}_{r}^{k,\ell }:=\sum\limits_{\lambda^{-\theta} _j\geq r}\Delta_{\ell  }^{k} Z_j.
\end{equation}
To simplify the notation we suppress the super indexes when we consider $k$ and $\ell $ fixed.
We remark that the first sum is the combined effect of increments that are generally small and the second one is the combined effect of  increments that are expected to be large. Namely, the typical magnitude of $Z_j$ is $\lambda_{j}^{-\theta}$ and its typical increments are also $\lambda_{j}^{-\theta}$ if there is a loss on the interval under consideration.
More precisely, it follows from \eqref{t143} and \eqref{t123} that
\begin{equation}\label{z43}
  g_x(u) \geq c_1 u^\theta,\qquad \forall u>0,\, t>0.
\end{equation}
The following events will appear frequently the sequel:
\begin{equation}\label{t228}\qquad
  A_{\text{sum}}^\alpha(k,\ell ):=\left\{|\Delta_{\ell }^{k} Z| \geq h^\alpha \right\}
\end{equation}
and
\begin{equation}\label{z44}
  \structure{\underline{A}^\alpha(k,\ell ):=\left\{|\underline{T}_{h^{-\alpha}}^{k,\ell }| \leq \frac{h^\alpha}{2}\right\}}\text{ and }
\structure{\overline{A}^\alpha(k,\ell ):=\left\{|\overline{T}_{h^{-\alpha}}^{k,\ell }| \leq \frac{h^\alpha}{2}\right\}}.
\end{equation}

\begin{fact}\label{t202}Let $\varepsilon_0>0$.
 Then
  there is a  positive constant $c(\varepsilon_0)$  such that for every $k<2^\ell $
  \begin{equation}\label{t203}
    \mathrm{Var}\left(\underline{T}^{k,\ell }_{h^{-\alpha}}\right) \leq
     c(\varepsilon_0) \cdot
      h^{1+\alpha(2+(1-1/\theta)-\beta-\varepsilon_0)}.
  \end{equation}
\end{fact}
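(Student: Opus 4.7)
The plan is to exploit independence of the $Z_j$'s together with the single-increment variance bound from Proposition~\ref{t180} and the Blumenthal--Getoor growth estimate \eqref{o81}. Since the processes $Z_j$ for different $j$ are built from independent Poisson point processes (and stationary initial distributions), their increments $\Delta_\ell^k Z_j$ over the fixed dyadic interval $I_\ell^k$ are independent, and hence
\begin{equation*}
\mathrm{Var}\bigl(\underline T_{h^{-\alpha}}^{k,\ell}\bigr) \;=\; \sum_{\lambda_j^\theta > h^{-\alpha}} \mathrm{Var}\bigl(\Delta_\ell^k Z_j\bigr),
\end{equation*}
with each summand bounded by $K_0\min\bigl\{\lambda_j^{-2\theta},\; h\,\lambda_j^{-(2\theta-1)}\bigr\}$ from Proposition~\ref{t180}.

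The minimum is attained by $h\,\lambda_j^{-(2\theta-1)}$ when $h\lambda_j<1$ and by $\lambda_j^{-2\theta}$ otherwise; I would therefore split the summation at $\lambda_j = h^{-1}$. The two pieces, $\Sigma_1$ over $h^{-\alpha/\theta} < \lambda_j \leq h^{-1}$ (non-empty only when $\alpha<\theta$) and $\Sigma_2$ over $\lambda_j > h^{-1}$, are each then evaluated block-by-block over the dyadic blocks $\lambda_j^\theta \in (L^{k-1}, L^k]$. By \eqref{o81}, each block contains at most $K_1(\varepsilon_0/\theta)\, L^{k(\beta-1+\varepsilon_0/\theta)}$ indices, and within the block one has the trivial bounds $\lambda_j^{-(2\theta-1)} \leq L^{-k(2-1/\theta)}$ and $\lambda_j^{-2\theta} \leq L^{-2k}$. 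The per-block contributions are thus of order $h\,L^{k(\beta-3+1/\theta+\varepsilon_0/\theta)}$ for $\Sigma_1$ and $L^{k(\beta-3+\varepsilon_0/\theta)}$ for $\Sigma_2$.

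Since $\beta\leq 2$ and $\theta\geq 1$, both exponents of $L^k$ are strictly negative once $\varepsilon_0$ is small enough, so each geometric series is dominated by its smallest-$k$ term. For $\Sigma_1$ this term corresponds to $L^k \sim h^{-\alpha}$, yielding
\[
\Sigma_1 \;\leq\; c(\varepsilon_0)\,h\cdot h^{-\alpha(\beta-3+1/\theta+\varepsilon_0/\theta)} \;=\; c(\varepsilon_0)\,h^{1+\alpha(3-1/\theta-\beta)-\alpha\varepsilon_0/\theta},
\]
which, after relabelling $\varepsilon_0$, is exactly the claimed bound (note $3-1/\theta-\beta = 2+(1-1/\theta)-\beta$). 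A short arithmetic identity, $(\theta-\alpha)(3-\beta) + \alpha/\theta - 1 \geq 0$ for $\alpha\leq\theta$, then shows that the analogous estimate for $\Sigma_2$ (whose first block sits at $L^k\sim h^{-\theta}$) is no looser than the target; and when $\alpha\geq\theta$ one has $\Sigma_1 = 0$ while the first block of $\Sigma_2$ sits at $L^k\sim h^{-\alpha}$, where $\alpha/\theta-1\geq 0$ directly gives the required inequality.

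The only genuine technical point, rather than an obstacle, is the sign check on the two geometric-series exponents together with the arithmetic verification that $\Sigma_2$ never overtakes $\Sigma_1$'s contribution on any subregion of $(\alpha,\beta)$-parameter space. The $\varepsilon_0$ in the statement enters only through the Blumenthal--Getoor margin from \eqref{o81}, and is absorbed as an arbitrarily small loss in the final exponent.
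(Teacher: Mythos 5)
Your proposal is correct and follows essentially the paper's own argument: independence of the $Z_j$ turns the variance of $\underline{T}^{k,\ell}_{h^{-\alpha}}$ into a sum of single-process variances, Proposition~\ref{t180} bounds each term, the block count \eqref{o81} controls how many $\lambda_j^\theta$ lie in each interval $[L^{k-1},L^k)$, and the resulting geometric series is dominated by the block at $L^k\sim h^{-\alpha}$, giving the stated exponent. The only deviation is that you split the minimum in Proposition~\ref{t180} at $\lambda_j=h^{-1}$ and verify separately that the far tail $\Sigma_2$ stays below the target; the paper avoids this case analysis by simply using the bound $K_0\,h\,\lambda_j^{-(2\theta-1)}$ for \emph{all} summands (it upper-bounds the minimum), so your extra arithmetic check, while correct, is not needed.
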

\begin{proof} Recall that $L_k=L^k$ for some constant $L>1$. Since the processes $Z_j$ are independent of each other for different values of $j$, we can write
 \[ \mathrm{Var}\left(\underline{T}_{h^{-\alpha}}^{k,\ell }\right) \leq \sum\limits_{\left\{k:L_k>h^{-\alpha}\right\}}
     \sum\limits_{\lambda_{j}^{\theta}\in[L_{k-1},L_k)} \mathrm{Var}\left(
     \Delta_{\ell }^{k}Z_j\right)\]
  Then, recall that $N_k$ stands for the number of indices $j$ with $\lambda_j^\theta$ in the interval $[L_{k-1}, L_k)$, as in \eqref{def:mk-nk}. We estimate $N_k$ using \eqref{o81} to obtain
  \[ \begin{aligned}
    \mathrm{Var}\left(\underline{T}_{h^{-\alpha}}^{k,\ell }\right)&\leq   \sum\limits_{\left\{k:L_k>h^{-\alpha}\right\}}N_k \cdot
     \max\limits_{\lambda_{j}^{\theta} \geq L_{k-1}}\mathrm{Var}(\Delta_{\ell }^{k}Z_j)\\
    & \leq
     \sum\limits_{\left\{k:L_k>h^{-\alpha}\right\}}
     K_1(\varepsilon_0)K_0L_{k}^{\beta-1+\varepsilon_0}h\frac{L}{L_{k-1}^{2-1/\theta}}.
   \end{aligned}  \]
 After rearranging terms we obtain that the exponent of $L_k$ is $\beta-2+\varepsilon_0-(1-1/\theta)<0$, thus the sum can be estimated by the first term. We obtain
    \[  \begin{aligned} \mathrm{Var}\left(\underline{T}_{h^{-\alpha}}^{k,\ell }\right)&\leq
       \sum\limits_{\left\{k:L_k>h^{-\alpha}\right\}}
     K_1(\varepsilon_0)K_0L^{3-1/\theta}hL_k^{\beta-2+\varepsilon_0-(1-1/\theta)}\\
      &\leq c(\varepsilon_0) \cdot
      h^{1+\alpha(2+(1-1/\theta)-\beta-\varepsilon_0)}.
 \end{aligned} \]
 This finishes the proof.
\end{proof}

\subsection{Estimates  on the region $R_1$} In this section we make some preparation to determine the multifractal spectrum on the region $R_1$ as defined in \eqref{z20}. That is, we assume that
\begin{equation}\label{t204}
  \alpha <\frac{1}{\beta-(1-1/\theta)}.
\end{equation}
In this section we always assume that $\varepsilon_0>0$ satisfies that
\begin{equation}\label{t215}
  \alpha(\beta-(1-1/\theta))+\varepsilon_0(\alpha+1)<1.
\end{equation}
To understand the aim of the following assertions recall that
on region $R_1$ our aim  is to verify that
\begin{equation}\label{z41}
  \#\left\{k:|\Delta_{\ell }^{k}Z|\sim h^\alpha\right\}
  \approx
  h^{- \alpha\left(\beta-(1-1/\theta)\right)}.
\end{equation}
Recall the event $\underline{A}^{\alpha}{(k,l)}$ from \eqref{z44}. First note that
 it follows from
Chebyshev's  inequality, Fact \ref{t202} and \eqref{t188} that
for every $0 \leq k<2^\ell $ we have
\begin{equation}\label{o123}
 \mathbb{P}\left(\underline{A}^{\alpha}{(k,\ell )}\right)
  \geq
  1-4c(\varepsilon_0) \cdot h^{1-\alpha(\beta-(1-1/\theta)+\varepsilon_0)}.
\end{equation}


To shorten the presentation we introduce the event
$$
F:=\left\{
\exists j:\lambda_j^\theta \leq h^{-\alpha},
Z_j \mbox{ has loss in the interval } I_{\ell}^{k}
\right\},
$$
in other words $F$ is the event that one of the processes in $\overline{T}_{h^{-\alpha}}^{\ell,k}$ has a jump (an event of loss) in the interval $I_{\ell}^{k}$.
Recall $A_{\text{sum}}^{\alpha}(k,\ell))$ from \eqref{t228}. Using \eqref{o123} we can estimate $\mathbb{P}(A_{\text{sum}}^{\alpha}(k,\ell))$ by conditioning on $\underline{A}^\alpha(k,\ell)$.
Namely,

\begin{fact}\label{t209} Assume that $\varepsilon_0>0$ satisfies \eqref{t215}. Let $c(\varepsilon_0)$ be the constant introduced in Fact \ref{t202}. Then for every $k,\ell $:
\begin{equation}\label{t210}
  \mathbb{P}\left(A_{\mathrm{sum}}^\alpha(k,\ell )\right) \leq
7c(\varepsilon_0)h^{1-\alpha(\beta-(1-1/\theta))-
\varepsilon_0(\alpha+1)}
\end{equation}
\end{fact}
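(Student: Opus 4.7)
My strategy is to use the decomposition $\Delta_\ell^k Z = \underline{T}_{h^{-\alpha}}^{k,\ell} + \overline{T}_{h^{-\alpha}}^{k,\ell}$ together with the triangle-inequality inclusion
\begin{equation*}
A_{\mathrm{sum}}^\alpha(k,\ell) \subseteq \underline{A}^\alpha(k,\ell)^c \cup \overline{A}^\alpha(k,\ell)^c .
\end{equation*}
The first event is already under control: combining $\mathbb{E}[\underline{T}]=0$ from \eqref{t188} with Chebyshev's inequality and Fact \ref{t202} gives the bound $\mathbb{P}(\underline{A}^\alpha(k,\ell)^c) \leq 4c(\varepsilon_0) h^{1-\alpha(\beta-(1-1/\theta))-\alpha\varepsilon_0}$ already recorded in \eqref{o123}. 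Hence the whole task is to establish a comparable estimate for $\mathbb{P}(\overline{A}^\alpha(k,\ell)^c)=\mathbb{P}(|\overline{T}|>h^\alpha/2)$.

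For this I would split further using the event $F$, that at least one process with $\lambda_j^\theta \leq h^{-\alpha}$ experiences a loss in $I_\ell^k$. A union bound together with the Poisson estimate $\mathbb{P}(Z_j \text{ has a loss in } I_\ell^k) = 1-e^{-\lambda_j h} \leq \lambda_j h$, and then a block-wise summation using the Blumenthal--Getoor regularity bound $N_k \leq K_1(\varepsilon_0) L_k^{\beta-1+\varepsilon_0}$ from \eqref{o81}, yields
\begin{equation*}
\mathbb{P}(F) \leq h\sum_{\lambda_j^\theta \leq h^{-\alpha}} \lambda_j \leq K_1(\varepsilon_0)\,h \sum_{L_k \leq C h^{-\alpha}} L_k^{\beta-(1-1/\theta)+\varepsilon_0}.
\end{equation*}
Since the exponent is positive, this geometric sum is dominated by its top term at $L^k \approx h^{-\alpha}$, giving $\mathbb{P}(F) \leq C_1\,c(\varepsilon_0)\, h^{1-\alpha(\beta-(1-1/\theta))-\alpha\varepsilon_0}$. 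On $F^c$ every $Z_j$ with $\lambda_j^\theta \leq h^{-\alpha}$ grows deterministically on $I_\ell^k$, so Proposition \ref{t207} provides $\mathbb{E}[\Delta_\ell^k Z_j \mid Z_j \text{ has no loss on } I_\ell^k] \leq C\lambda_j h/\lambda_j^\theta$. Markov's inequality applied to $|\overline{T}|\mathbf{1}_{F^c}$, followed by an analogous block estimate — the weight in each block is now $\lambda_j^{1-\theta} = (\lambda_j^\theta)^{-(1-1/\theta)}$ — yields a bound of the same form for $\mathbb{P}(\overline{A}^\alpha(k,\ell)^c \cap F^c)$.

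Summing the three contributions gives an inequality of the shape $(\text{const})\,c(\varepsilon_0)\, h^{1-\alpha(\beta-(1-1/\theta))-\alpha\varepsilon_0}$. Since $h<1$ the trivial slack $h^{-\alpha\varepsilon_0} \leq h^{-\varepsilon_0(\alpha+1)}$ produces the announced exponent, and choosing the additive constants so that their total is at most $7$ (absorbing any slack into $c(\varepsilon_0)$) completes the proof. The step I expect to be most delicate is the final block sum for $\mathbb{P}(\overline{A}^\alpha(k,\ell)^c \cap F^c)$: the $L_k$-exponent $\beta-2+1/\theta+\varepsilon_0$ may be positive, zero or negative depending on $\beta$ and $\theta$, so one must verify in each regime that, after dividing by $h^\alpha$, the resulting exponent is no worse than $1-\alpha(\beta-(1-1/\theta))-\alpha\varepsilon_0$. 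The hypothesis \eqref{t215} is precisely what makes these case-by-case exponents compatible.
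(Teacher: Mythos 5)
Your proposal follows the paper's proof step for step: the same splitting of $A_{\mathrm{sum}}^\alpha(k,\ell)$ via $\underline A^\alpha(k,\ell)$ and $\overline A^\alpha(k,\ell)$, the same Chebyshev bound through Fact \ref{t202} (i.e.\ \eqref{o123}), the same event $F$ estimated with $1-e^{-\lambda_j h}\le\lambda_j h$ and block-wise counting via \eqref{o81}, and the same Markov-plus-Proposition~\ref{t207} estimate on $F^c$. The only point you pass over silently is the applicability of Proposition \ref{t207}, namely that $\lambda_j h<1$ for every $j$ with $\lambda_j^\theta\le h^{-\alpha}$; the paper makes explicit that this is where the restriction to $R_1$ enters (there one has $\lambda_j h\le h^{1-\alpha/\theta}<1$).

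The genuine gap is in your final paragraph. The paper closes the block sum for $\mathbb{P}(\overline A^c\cap F^c)$ by asserting that the $L_k$-exponent $\beta-2+1/\theta+\varepsilon_0$ is positive, so the geometric sum is dominated by its top block $L_k\approx h^{-\alpha}$, which yields exactly $h^{1-\alpha(\beta-(1-1/\theta))-\alpha\varepsilon_0}$ after dividing by $h^\alpha/2$. You instead allow the exponent to have either sign and claim that \eqref{t215} reconciles the cases; it does not. If $\beta-2+1/\theta+\varepsilon_0<0$, the sum over blocks is $O(1)$ --- it is dominated by the smallest intensities, e.g.\ $\lambda_1=1$ --- so the first-moment Markov bound gives only something of order $h^{1-\alpha}$, and $h^{1-\alpha}\le \mathrm{const}\cdot h^{1-\alpha(\beta-(1-1/\theta))-\varepsilon_0(\alpha+1)}$ would require the \emph{lower} bound $\alpha\bigl(1-(\beta-(1-1/\theta))\bigr)\le \varepsilon_0(\alpha+1)$, whereas \eqref{t215} is an \emph{upper} bound on $\alpha(\beta-(1-1/\theta))+\varepsilon_0(\alpha+1)$ and supplies no such inequality (e.g.\ $\theta=3$, $\beta$ near $1$, $\alpha$ near $1$, $\varepsilon_0$ small satisfies \eqref{t215} but violates the needed inequality). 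So precisely in the regime you flag as delicate your argument does not close: you must either justify positivity of the exponent, as the paper does, or replace the first-moment Markov step by a sharper estimate there; appealing to \eqref{t215} is not a valid substitute.
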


\begin{proof}
We fix an $\alpha$ and  $0 \leq k<2^\ell $ and we suppress them below, that is, we write $A_{\text{sum}}:=A_{\text{sum}}^{\alpha}(k,\ell)$ and $\underline{A}:=\underline{A}^\alpha(k,\ell).$
\begin{equation}\begin{aligned}\label{o124}
\mathbb{P}\left(A_{\text{sum}}\right) &\leq \mathbb{P}\left(
\underline{A}^{c}\right)+\mathbb{P}\left(A_{\text{sum}}\cap \underline{A}\right)   \\
   &\leq 4c(\varepsilon_0) \cdot h^{1-\alpha(\beta -(1-1/\theta)-\varepsilon_0)} +\mathbb{P}(A_{\text{sum}}\cap \underline{A}).
\end{aligned}\end{equation}
It is left to estimate $\mathbb{P}(A_{\text{sum}}\cap \underline{A})$.
Clearly, $|\Delta Z|=|\underline{T}_{h^{-\alpha}}+\overline{T}_{h^{-\alpha}}|$.
Hence
\begin{equation}
\begin{aligned}\label{t230}
 \mathbb{P}\left(A_{\text{sum}}\cap \underline{A}\right)
 &\leq
 \mathbb{P}\left(
 \overline{A}^c
 \right)
 =
\mathbb{P}\left(|\overline{T}_{h^{-\alpha}}|
>h^{\alpha}/2\right)\\
 &\leq
 \mathbb{P}\left( F\right)+
\structure{\mathbb{P}
 \left(\overline{T}_{h^{-\alpha}}>\frac{h^\alpha}{2}|
F^c\right)}
\end{aligned}
\end{equation}
First we estimate $\mathbb P(F)$ in \eqref{t230}. We decompose the indices $j$ according to which exponential interval $[L_{k-1}, L_k)$ their parameter $\lambda_j^\theta$ falls in, then count the indices $j$ in each interval using the estimate on $N_k$ as in \eqref{z20}. We also use  that $1-\e{-x}\le x$ to obtain that
\begin{equation}
\begin{aligned}
\mathbb P(F)&\le \sum\limits_{\lambda_{j}^{\theta}
<h^{-\alpha}}
  (1-\e{-h\lambda_j})
   \le \sum\limits_{k:\ L_{k-1}<h^{-\alpha}}
\sum\limits_{\lambda_{j}^{\theta}
\in(L_{k-1},L_k)}h\lambda_j\\
&\le \sum\limits_{k:\ L_{k-1}<h^{-\alpha}}
 N_k hL_k^{1/\theta}
 \le \sum\limits_{k:\ L_{k-1}<h^{-\alpha}}
K_1(\varepsilon_0)L_{k}^{\beta-1+\varepsilon_0}
hL_k^{1/\theta}.
\end{aligned}
\end{equation}
Since the exponent of $L_k$ is $\beta-1+\varepsilon_0>0$, the terms in the sum grow exponentially, and hence the sum can be estimated from above by some constant times the last term. Thus we arrive at
\begin{equation}\label{redterm}
\mathbb P(F) \le C(\varepsilon_0) h^{1-\alpha(\beta-(1-1/\theta))-
\alpha\varepsilon_0}.
\end{equation}
Now we turn to estimate $\mathbb{P}
 \left(\overline{T}_{h^{-\alpha}}>\frac{h^\alpha}{2}|
F^c\right)$ in \eqref{t230}. We write $F_j^c:=\ Z_j \mbox{\,has no loss on }I_{\ell}^{k}$ below.
Applying Markov's inequality, we have the upper bound
\begin{equation}
\begin{aligned}
\mathbb{P}
 \left(\overline{T}_{h^{-\alpha}}>\frac{h^\alpha}{2}|
F^c\right)&\le \frac{\mathbb{E}\left[
\overline{T}_{h^{-\alpha}}
|F^c
\right]}{h^\alpha/2}
\le \frac{\sum\limits_
{\lambda_{j}^{\theta}<h^{-\alpha}}
\mathbb{E}\left[
\Delta_{\ell}^{k}Z_j|F_j^c
\right]}{h^\alpha/2}.
\end{aligned}
\end{equation}
Here, we would like to use Proposition \ref{t207}. For this we need to use that $\lambda_j h<\lambda_j h^{\alpha/\theta}<1$ to be able to apply Proposition \ref{t207}. This is the place where we actually use that $\alpha<1/\theta$ (which is always the case when $\alpha(\beta-(1-1/\theta))<1$ that is we are in $R_1$). Decomposing the indices $j$s again,
\begin{equation}
\begin{aligned}
\mathbb{P}
 \left(\overline{T}_{h^{-\alpha}}>\frac{h^\alpha}{2}|
F^c\right)&
\le \frac{ \sum\limits_{k:\ L_{k-1}<h^{-\alpha}}
 \sum\limits_{\lambda_{j}^
{\theta}
\in(L_{k-1},L_k\wedge h^{-\alpha})}
 K'_3\lambda_{j}^{1-\theta}h
 }
 {h^{\alpha}/2} \\
 & \le \frac{\sum\limits_{k:\ L_{k-1}<h^{-\alpha}}
K_1(\varepsilon_0)L_{k}^{\beta-1+\varepsilon_0}
K'_3L_{k}^{1/\theta-1}h/L
}
{h^\alpha/2}
\end{aligned}
\end{equation}
Note that here the exponent of $L_k$ is $\beta-1+\varepsilon_0 + (1/\theta-1)>0$. Thus again, the summands form a geometric series with mean greater than $ 1$, so the sum is constant times the last element.
\begin{equation}\label{blueterm}
\mathbb P \left(\overline{T}_{h^{-\alpha}}>\frac{h^\alpha}{2}|
F^c\right) \le C(\varepsilon_0) h^{1-\alpha(\beta-(1-1/\theta))-
\alpha\varepsilon_0}.
\end{equation}
Combining the estimates \eqref{redterm} and \eqref{blueterm}, combined with \eqref{t230} and then with \eqref{o124} finishes the proof of Fact \ref{t209}.
\end{proof}
An immediate consequence is the following
\begin{corollary}\label{t265}
 Almost surely,  for all $\ell $ large enough we have
 \begin{equation}\label{t271}
  \#\left\{k \leq 2^\ell:
\underbrace{ |
  \Delta_{\ell }^{k}Z |
 \geq h^\alpha}_{A_{\mathrm{sum}}^{\alpha}(k,\ell )}
  \right\}  < 7c(\varepsilon_0)h^{-\alpha(\beta-(1-1/\theta))
  -\varepsilon_0(1+\alpha)},
\end{equation}
 and
 \begin{equation}\label{t270}
   \#\left\{k \leq 2^\ell:
 \underbrace{ |\underline{T}_{h^{-\alpha}}^{k,\ell }|>h^\alpha/2}_{\left(\underline{A}^{\alpha}(k,\ell )\right)^c}
  \right\}
  < 4c(\varepsilon_0)h^{-\alpha(\beta-(1-1/\theta))
  -\varepsilon_0(1+\alpha)},
 \end{equation}
finally,
\begin{equation}\label{t272}
  \#\left\{k \leq 2^\ell:
  \underbrace{  |\overline{T}_{h^{-\alpha}}^{k,\ell }|>h^\alpha/2}_{\left(\overline{A}^{\alpha}(k,\ell )\right)^c}
  \right\}  < 3c(\varepsilon_0)h^{-\alpha(\beta-(1-1/\theta))
  -\varepsilon_0(1+\alpha)}.
\end{equation}
\end{corollary}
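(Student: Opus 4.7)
The plan is to establish each of the three inequalities by the same Borel--Cantelli template: first produce a per-interval (per $k$) probability bound, then use the counting Markov inequality (Lemma \ref{o72}) to control the number of bad $k$'s at level $\ell$, and finally sum the resulting bad probabilities over $\ell$. In all three cases the needed per-$k$ bounds are already in hand: Fact \ref{t209} handles \eqref{t271}; the pointwise bound \eqref{o123} coming from Chebyshev plus Fact \ref{t202} handles \eqref{t270}; and for \eqref{t272} the ingredients \eqref{redterm} and \eqref{blueterm} from the proof of Fact \ref{t209} together bound $\mathbb{P}(\overline{A}^\alpha(k,\ell)^c)\le \mathbb{P}(F)+\mathbb{P}(\overline{T}_{h^{-\alpha}}>h^\alpha/2\mid F^c)$ by a constant times $h^{1-\alpha(\beta-(1-1/\theta))-\alpha\varepsilon_0'}$.

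Concretely for \eqref{t271}, I would pick some auxiliary $\varepsilon_0'\in(0,\varepsilon_0)$ still satisfying \eqref{t215} and apply Fact \ref{t209} with $\varepsilon_0'$ in place of $\varepsilon_0$, obtaining a per-$k$ bound
\[
\hat p_\ell \;:=\; 7c(\varepsilon_0')\,h^{1-\alpha(\beta-(1-1/\theta))-\varepsilon_0'(\alpha+1)}.
\]
Then apply Lemma \ref{o72} with $n=2^\ell$ and $N=N_\ell:=2^{\ell(\varepsilon_0-\varepsilon_0')(1+\alpha)}$. The counting threshold $N_\ell\cdot 2^\ell\hat p_\ell$ collapses exactly to $7c(\varepsilon_0')\,h^{-\alpha(\beta-(1-1/\theta))-\varepsilon_0(\alpha+1)}$, and the bad probability is at most $1/N_\ell=2^{-\ell(\varepsilon_0-\varepsilon_0')(1+\alpha)}$. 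Summing the geometric series in $\ell$ and invoking Borel--Cantelli yields \eqref{t271} almost surely for all $\ell$ large enough.

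For \eqref{t270} and \eqref{t272} the argument is word-for-word identical: I would feed the corresponding per-$k$ bound (namely $4c(\varepsilon_0')h^{1-\alpha(\beta-(1-1/\theta))-\alpha\varepsilon_0'}$ from \eqref{o123} for \eqref{t270}, and the sum of \eqref{redterm} and \eqref{blueterm} for \eqref{t272}) into Lemma \ref{o72} with the same growing $N_\ell$, and conclude via Borel--Cantelli. The exponents coming from these two per-$k$ bounds are in fact better than the one in Fact \ref{t209} (they have $\alpha\varepsilon_0'$ instead of $(1+\alpha)\varepsilon_0'$), so the loose target $h^{-\varepsilon_0(1+\alpha)}$ in the corollary is comfortably met.

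The only subtle point, and the main obstacle, is matching constants and exponents precisely. Lemma \ref{o72} inflates the bound by an arbitrary factor $N$, so one cannot hope to keep literally the constant $7c(\varepsilon_0)$ from Fact \ref{t209} without absorbing $N_\ell$ somewhere. The trick that unlocks everything is exactly the choice above: run Fact \ref{t209} with $\varepsilon_0'<\varepsilon_0$, and use the slack $\varepsilon_0-\varepsilon_0'>0$ both to absorb the multiplicative $N_\ell$ into $h^{-\varepsilon_0(1+\alpha)}$ and to keep $\sum_\ell 1/N_\ell<\infty$. Once this bookkeeping is in place, the three claims \eqref{t271}, \eqref{t270}, \eqref{t272} are immediate from the respective per-$k$ probability estimates, the Markov counting inequality, and Borel--Cantelli.
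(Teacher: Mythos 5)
Your proposal is correct and follows essentially the same route as the paper: the paper likewise feeds the per-$k$ estimates (the bounds behind Fact \ref{t209}, together with \eqref{o123} and \eqref{redterm}--\eqref{blueterm}) into Lemma \ref{o72}, choosing $N=h^{-\varepsilon_0}$ so that the extra factor is absorbed into the exponent $-\varepsilon_0(1+\alpha)$, and then concludes by Borel--Cantelli since $\sum_\ell 2^{-\varepsilon_0\ell}<\infty$. Your alternative bookkeeping with the auxiliary $\varepsilon_0'<\varepsilon_0$ and $N_\ell=2^{\ell(\varepsilon_0-\varepsilon_0')(1+\alpha)}$ works just as well, the only (immaterial) difference being that it delivers the stated inequalities with constants $7c(\varepsilon_0')$, $4c(\varepsilon_0')$, $3c(\varepsilon_0')$ in place of the literal $7c(\varepsilon_0)$, $4c(\varepsilon_0)$, $3c(\varepsilon_0)$.
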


\begin{proof}
First we apply
 Lemma \ref{o72}.
with $N=h^{-\varepsilon_0}$, \newline $\hat{p}=
rc(\varepsilon)
h^{1-\alpha(\beta-(1-1/\theta))-\alpha\varepsilon_0}$, where $r=7$, $r=4$ and $r=3$ in the first, second and third case respectively. Since $\left\{h^{-\varepsilon_0}=2^{\varepsilon_0 \cdot \ell }\right\}$ is summable we obtain the assertions from Borel-Cantelli lemma.
\end{proof}

\section{$f_g$ on region $R_1$}\label{z105}
In this section we are ready to prove the upper and lower bound on the large deviation multifractal spectrum $f_g$ on the region $R_1$.
\subsection{Upper bound on $f_g$ on the region $R_1$}

\begin{fact}\label{t211}
  The large deviation spectrum $f_g(\alpha)$ (defined in \eqref{t212})
satisfies
\begin{equation}\label{t213}
  f_g(\alpha) \leq \alpha(\beta-(1-1/\theta))
\end{equation}
\end{fact}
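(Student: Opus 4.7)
My plan is to derive this upper bound directly from Corollary~\ref{t265}, specifically from the estimate \eqref{t271}, which almost already says the right thing: it controls the number of dyadic intervals on which $|\Delta_\ell^k Z| \ge h^\alpha$. The only gap between \eqref{t271} and the desired bound is that $N_\ell^\varepsilon(\alpha)$ from Definition~\ref{t205} imposes a two-sided condition $|\Delta_\ell^k Z|\in(h^{\alpha+\varepsilon},h^{\alpha-\varepsilon})$, so I will simply use the trivial monotonicity
\[
N_\ell^\varepsilon(\alpha)=\#J_{\alpha,\ell,\varepsilon}\;\le\;\#\bigl\{0\le k<2^\ell:|\Delta_\ell^k Z|\ge h^{\alpha+\varepsilon}\bigr\},
\]
and then invoke \eqref{t271} with $\alpha$ replaced by $\alpha':=\alpha+\varepsilon$.

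Concretely, fix $\alpha$ in the interior of $R_1$, i.e.\ $\alpha(\beta-(1-1/\theta))<1$. For all sufficiently small $\varepsilon>0$ and $\varepsilon_0>0$, the pair $(\alpha',\varepsilon_0)$ still satisfies the constraint \eqref{t215}, so Corollary~\ref{t265} applies and gives, almost surely for all $\ell$ large,
\[
N_\ell^\varepsilon(\alpha)\;\le\;7c(\varepsilon_0)\,h^{-(\alpha+\varepsilon)(\beta-(1-1/\theta))-\varepsilon_0(1+\alpha+\varepsilon)}.
\]
Taking logarithms and dividing by $\log 2^\ell=-\log h$ yields
\[
\frac{\log N_\ell^\varepsilon(\alpha)}{\log 2^\ell}\;\le\;(\alpha+\varepsilon)\bigl(\beta-(1-1/\theta)\bigr)+\varepsilon_0(1+\alpha+\varepsilon)+\frac{\log(7c(\varepsilon_0))}{\log 2^\ell}.
\]
The last term vanishes as $\ell\to\infty$, so the $\limsup$ is bounded by the first two. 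Since $\varepsilon_0$ was arbitrary (subject only to \eqref{t215}), we may let $\varepsilon_0\to 0$ in this upper bound, leaving $(\alpha+\varepsilon)(\beta-(1-1/\theta))$. Finally letting $\varepsilon\to 0$ in the definition \eqref{t212} of $f_g(\alpha)$ delivers \eqref{t213}.

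There is no real obstacle here: the entire difficulty has already been packaged into Corollary~\ref{t265}, which itself rests on the variance estimate of Fact~\ref{t202} (via Chebyshev) together with the Markov-type concentration Lemma~\ref{o72} and a Borel--Cantelli argument. The only things to check carefully in the write-up are (i) that the shifted parameter $\alpha+\varepsilon$ still lies in $R_1$ and still admits a valid $\varepsilon_0$ in \eqref{t215}, which is an open condition and hence survives small perturbations, and (ii) the order of the limits $\ell\to\infty$, $\varepsilon_0\to 0$, $\varepsilon\to 0$, which is harmless because the bound we obtain is uniform in $\ell$ once $\varepsilon,\varepsilon_0$ are fixed.
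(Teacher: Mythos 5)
Your proposal is correct and follows essentially the same route as the paper: drop the lower constraint in $J_{\alpha,\ell,\varepsilon}$, apply \eqref{t271} of Corollary~\ref{t265} with $\alpha$ shifted by the small parameter, take logarithms and let the small parameters tend to zero. The only cosmetic difference is that the paper merges your $\varepsilon$ and $\varepsilon_0$ into a single parameter, which does not change the argument.
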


\begin{proof}
  Fix a small $\varepsilon_0>0$ satisfying \eqref{t215}.
By \eqref{o121} we have
\begin{equation}\label{t214}
  N_{\ell }^{\varepsilon_0}(\alpha)
 \leq \overline{N}_{\ell }^{\varepsilon_0}(\alpha):=
\#\left\{
0 \leq k <2^\ell :
\left|\Delta_{ \ell }^{k}Z\right|>
h^{-(\alpha+\varepsilon_0)}
\right\}.
\end{equation}
Now we replace $\alpha$ with $\alpha+\varepsilon_0$ in \eqref{t271}.
 This yields that for almost all realizations, there is an $\ell ^*$ such that for $\ell >\ell ^*$ we have
\begin{equation}\label{t218}
 \overline{N}_{\ell }^{\varepsilon_0}(\alpha)
 <
2^{\ell \left[
(\alpha+\varepsilon_0)(\beta-(1-1/\theta))+\varepsilon_0(1+\alpha)
\right]
}
\end{equation}
Taking logarithm of both sides we have for all
$\ell >\ell ^*$:
$$
\frac{\log \overline{N}_{\ell }^{\varepsilon_0}(\alpha)}{\log 2^\ell }
<
(\alpha+\varepsilon_0)(\beta-(1-1/\theta))
+\varepsilon_0(1+\alpha).
$$
This implies that \eqref{t213} holds.
\end{proof}


\subsection{The lower bound on $f_g$ on $R_1$}

In this case we need to assume regularity (introduced in Definition \ref{t220}) of the sequence $(\lambda_j)_{j\ge 1}$. Let $\varepsilon_0$ be fixed satisfying \eqref{t215} and
let $A$ be an upper bound on $a_{i+1}-a_i$ from \eqref{ajuli}. That is
$$
0<a_{i+1}-a_i<A\quad  \mbox{ for all } i \geq 2.
$$
Our aim is to prove
\begin{equation}\label{o126}
f_g(\alpha) \geq \alpha(\beta-(1-1/\theta)) \mbox{ if } (\alpha,\beta)\in R_1.
\end{equation}
Further, we always  assume that  $\ell $ is so large that for $h=2^{-\ell }$ we have
\begin{equation}\label{t235}
  L^{A+1}h^{-(\alpha-\varepsilon_0)}\ll h^{-\alpha}.
\end{equation}

\begin{assumption}\label{t239}
  Let $\ell _0$ be so large that besides \eqref{t235} the following statements hold for all $\ell>\ell_0$: whenever
$\lambda_j^\theta >2^{\ell(\alpha-\varepsilon_0)}=h^{-(\alpha-\varepsilon_0)}$ then we have:
\begin{description}
  \item[(a)] $ j>\max{\left\{j_0,j_1,j_2\right\}}$, where  $j_0, j_1, j_2$  were defined in Section \ref{t234},
  \item[(b)] $\iota_j<\left(\frac{1-\theta}{10 \cdot c_{33}}\right)^{\alpha/(\theta-\alpha)}$, where $\iota_j$ was defined in \eqref{t224} and $c_{33}:=\psi\theta\left(\frac{\left(w+1\right)^\theta L^{A+1}}{c_1}\right)^{1/(2\alpha)}$ and $w$ is defined below in \eqref{z86},
  \item[(c)]
  $h^{\varepsilon_0}
  <
  \frac{4}{5}(1-\eta)
  \frac{2c_1}{(w+1)^\theta L^{A+1}}
  $.
\end{description}
\end{assumption}
Fix an
$\ell >\ell _0$.  Recall that the sequence $(\lambda_j)_{j\ge 1}$ is regular, as in Definition \ref{t220}.
By the definition of $A$ in \eqref{ajuli} there exists an  $r$
such that $L_{a_r}\in \left(L \cdot h^{-(\alpha-\varepsilon_0)},L^{A+1}
h^{-(\alpha-\varepsilon_0)}\right)$.
That is for
\begin{equation}\label{t255}
  J_\ell :=\left\{j:
\lambda_{j}^{\theta}\in
\left(h^{-(\alpha-\varepsilon_0)},L^{A+1}
h^{-(\alpha-\varepsilon_0)}\right)
\right\}
\end{equation}
we have
\begin{equation}\label{t241}
  \#J_\ell > h^{(-\alpha+\varepsilon_0)(\beta-1-\varepsilon_0)}.
\end{equation}
In \eqref{o123} we verified that for $\underline A=\underline A^\alpha(k,\ell)$
$$
\mathbb{P}\left(\underline{A}^{c}\right)=
\mathbb{P}\left(
|\underline{T}_{h^{-\alpha}}|>\frac{h^\alpha}{2}
\right) \leq 4C(\varepsilon_0)h^{1-\alpha(\beta-(1-1/\theta))
-\varepsilon_0(1+\alpha)}.
$$
Recall the definition $\underline A=\underline{A}^\alpha(k,\ell)$ from \eqref{z44}.
It is immediate from the bounds following \eqref{t230} that
for any $j\in J_\ell$ and
\begin{equation}\label{t261}
  \overline{B}_{ j}:=\overline{B}_{j}(k,\ell ):=\left\{\left|
\overline{T}^{k,\ell }_{h^{-\alpha}}-\Delta_{\ell }^{k} Z_j
\right|<\frac{h^\alpha}{2}\right\}.
\end{equation}
we have
\begin{equation}\label{t257}
  \mathbb{P}\left(\overline{B}_{j}^{c}\right) \leq
3C(\varepsilon)h^{1-\alpha(\beta-(1-1/\theta))-
\varepsilon_0(\alpha+1)}
\end{equation}
Therefore exactly as in Corollary \ref{t265} we get
\begin{fact}\label{t240}
Almost surely,
there exists an $\ell _1 \geq \ell _0$ (depending on the realization) such that $\forall \ell >\ell _1$ and
$j\in J_\ell $
 we have
  \begin{equation}\label{t233}
  \#\left\{k<2^\ell : \underline{A}(k,\ell )\cap \overline{B}_{j}(k,\ell ) \right\}
  >h^{-1}- 7c(\varepsilon_0)h^{-\alpha(\beta-(1-1/\theta))-\varepsilon_0(\alpha+1)}
.
\end{equation}
\end{fact}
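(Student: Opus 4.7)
The plan is to run the same Markov--Borel--Cantelli scheme as in the proof of Corollary \ref{t265}, applied to the ``bad set'' $(\underline{A}(k,\ell)\cap\overline{B}_j(k,\ell))^c$. First I would combine the two single-interval estimates \eqref{o123} and \eqref{t257} by a union bound to obtain the per-$k$ bound
$$\mathbb{P}\bigl((\underline{A}(k,\ell)\cap\overline{B}_j(k,\ell))^c\bigr)\leq 7c(\varepsilon_0)\,h^{1-\alpha(\beta-(1-1/\theta))-\varepsilon_0(\alpha+1)},$$
which is uniform in $k<2^\ell$ and $j\in J_\ell$ (the bound on $\mathbb{P}(\overline{B}_j^c)$ depending on $j$ only through membership in $J_\ell$). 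This plays the role of Fact \ref{t209} in the Corollary \ref{t265} argument.

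Next, for each fixed $j\in J_\ell$, I would apply Lemma \ref{o72} to the count
$$X_j(\ell):=\#\{k<2^\ell:(\underline{A}(k,\ell)\cap\overline{B}_j(k,\ell))^c\}$$
with $n=2^\ell=h^{-1}$, $\hat p$ as above (or its slightly milder version $7c(\varepsilon_0)h^{1-\alpha(\beta-(1-1/\theta))-\alpha\varepsilon_0}$), and $N=h^{-\varepsilon_0}$. Just as in Corollary \ref{t265}, this yields
$$\mathbb{P}\bigl(X_j(\ell) > 7c(\varepsilon_0)\,h^{-\alpha(\beta-(1-1/\theta))-\varepsilon_0(\alpha+1)}\bigr) < h^{\varepsilon_0},$$
which is the single-$j$ analogue of the desired threshold. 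The complementary event is precisely the failure of \eqref{t233} for that particular $j$.

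The main obstacle lies in the final step: upgrading the per-$j$ bound to one that holds simultaneously for all $j\in J_\ell$, with a level $\ell_1$ that does not depend on $j$. A union bound over $J_\ell$ requires $\sum_\ell |J_\ell|\cdot h^{\varepsilon_0}<\infty$, while by \eqref{o81} and the defining range \eqref{t255} (at most $A+1$ consecutive blocks) we only know $|J_\ell|\leq C_{A,\varepsilon_0}\, h^{-(\alpha-\varepsilon_0)(\beta-1+\varepsilon_0)}$, which is polynomially large in $h^{-1}$. I would resolve this by enlarging $N$ in Markov to $N=h^{-\delta}$ with $\delta>(\alpha-\varepsilon_0)(\beta-1+\varepsilon_0)+\varepsilon_0$, making $\sum_\ell |J_\ell|/N$ a convergent geometric series. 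The resulting count threshold then carries a harmless extra factor $h^{-\delta+\varepsilon_0}$, which is absorbed by working with a slightly smaller auxiliary $\varepsilon_0'<\varepsilon_0$ inside \eqref{o123} and \eqref{t257}: the slack $\alpha(\beta-(1-1/\theta))<1$ on $R_1$ provides exactly enough room to choose $\varepsilon_0'$ so that $N\cdot 2^\ell\hat p\leq 7c(\varepsilon_0)h^{-\alpha(\beta-(1-1/\theta))-\varepsilon_0(\alpha+1)}$. A final Borel--Cantelli step over $\ell$ then produces the required a.s.\ $\ell_1$ and completes the proof.
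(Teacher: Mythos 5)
Your opening steps coincide with the paper's own argument: it establishes the per-interval, per-$j$ bound \eqref{t257}, pairs it with \eqref{o123}, and then simply repeats the argument of Corollary \ref{t265} (Lemma \ref{o72} with $n=2^\ell$, $N=h^{-\varepsilon_0}$, then Borel--Cantelli); the uniformity in $j\in J_\ell$ is compressed into the phrase ``exactly as in Corollary \ref{t265}'' and no enlarged-$N$ device appears. The genuine gap is in your final ``absorption'' step, which does not survive the arithmetic. Write $\beta':=\beta-(1-1/\theta)$. To make the union bound over $j\in J_\ell$ summable in $\ell$ you must take $\delta>(\alpha-\varepsilon_0)(\beta-1+\varepsilon_0)+\varepsilon_0$, and this size is unavoidable because by \eqref{t241} the set $J_\ell$ really does contain at least $h^{(-\alpha+\varepsilon_0)(\beta-1-\varepsilon_0)}$ indices (its largeness is the whole point of its definition in \eqref{t255}). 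Lemma \ref{o72} with $N=h^{-\delta}$ then only gives the count threshold $N\cdot 2^\ell\hat p\asymp c(\varepsilon_0')\,h^{-\alpha\beta'-\alpha\varepsilon_0'-\delta}$, and matching the threshold demanded by \eqref{t233} forces $\alpha\varepsilon_0'+\delta\le\varepsilon_0(\alpha+1)$, i.e.\ $\delta=O(\varepsilon_0)$. But $\delta$ must exceed roughly $\alpha(\beta-1)$, a quantity bounded away from $0$ whenever $\beta>1$, while shrinking the auxiliary $\varepsilon_0'$ inside \eqref{o123} and \eqref{t257} improves the exponent by at most $\alpha\varepsilon_0$. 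The slack $\alpha\beta'<1$ from \eqref{t215} is the wrong resource here: it keeps the exponent $1-\alpha\beta'-\cdots$ of the per-interval probability positive, but it creates no room in the count threshold, which the statement pins at $h^{-\alpha\beta'-\varepsilon_0(\alpha+1)}$. What your scheme actually yields is a threshold of order $h^{-\alpha\beta'-\alpha(\beta-1)-\cdots}$, which is not \eqref{t233}; on part of $R_1$ (e.g.\ $\theta=3$, $\beta=2$, $\alpha$ near $3/4$) that exponent even exceeds $1$, so the conclusion is vacuous.

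If you want to make the ``for all $j\in J_\ell$'' quantifier explicit rather than implicit, inflating $N$ is the wrong lever. A more promising route keeps $N=h^{-\varepsilon_0}$ and removes the $j$-dependence from the random count: on $\overline{B}_j(k,\ell)^c$ either $|\overline{T}^{k,\ell}_{h^{-\alpha}}|\ge h^\alpha/4$ or $|\Delta^k_\ell Z_j|\ge h^\alpha/4$, so $\#\{k:\overline{B}_j(k,\ell)^c\}$ is bounded by a $j$-independent count (treated exactly as in Corollary \ref{t265}) plus $\#\{k:|\Delta^k_\ell Z_j|\ge h^\alpha/4\}$; for $j\in J_\ell$ the latter can be controlled through the almost sure events of Section \ref{t234} (the number of losses of $Z_j$ on $[0,1]$ is at most $2\lambda_j\sim h^{-(\alpha-\varepsilon_0)/\theta}$, and mesh intervals without a loss contribute only small increments), and $h^{-\alpha/\theta}$ is dominated by the allowed threshold since $\beta'\ge 1/\theta$. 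Some argument of this kind, not the enlarged-$N$ Markov bound, is what is needed to close your last step.
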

Recall that $T_u^{(j)}$ is the time of the $u$th jump in the Poisson process with intensity $\lambda_j$.
Fix an $\ell >\ell _1$ and $j \in J_\ell$. Let $k:=k_j(u)$ be the index of the $2^\ell $-mesh interval that contains $T_{u}^{(j)}$:
$$
k_j(u):=k \mbox{ if }  T_{u}^{(j)}\in I_{\ell }^{k}.
$$
Let
\begin{equation}\label{z89}
  Q_j:=\left\{k :\exists u\in\mathbb{N}
 \mbox{ with }k=k_j(u) \text{ s.t. }T_{u}^{(j)}<1
\right\}
\end{equation}
be the set of the $2^\ell$-mesh intervals where the Poisson process with intensity $\lambda_j$ has jumps.
Recall that $\mathcal{N}_j=\#Q_j$, where $\mathcal N_j$ was defined in \eqref{z45}.
Further we collect the indices $k$ such that we can find a process $Z_j$, $j\in J_\ell$ in such a way that in $I_{\ell }^{k}$ there is `possibly large' loss of $Z_j$, reflected in the fact that the inter-event time $\tau_u^{(j)}=T_u^{(j)}-T_{u-1}^{(j)}$ is sufficiently large:
\begin{equation}\label{t237}
 \mathcal{I}_j^w:=
 \left\{k\corange{\in Q_j} :\exists u\in\mathbb{N}
 \mbox{ with }k=k_j(u) \text{ s.t. }
\tau_{u}^{(j)}>\frac{1}{w\lambda_j}
\right\},
\end{equation}
where $w$ is so big that
\begin{equation}\label{z86}
  \e{-1/w}>0.99.
\end{equation}
Then it follows from the Large Deviation Theorem and Borel Cantelli lemma that for all $\ell $ large enough we have
\begin{equation}\label{z87}
  \#\left\{r \leq \mathcal{N}_j:\tau_{r}^{(j)}>\frac{1}{w\lambda_j}\right\}
  =\#\mathcal{I}_j^w
  >\mathcal{N}_j\e{-1/(2w)}.
\end{equation}
Using \eqref{t233}, also for all $\ell $ large enough we have
\begin{equation}\label{z88}
  \#\left\{k:\underline{A}(k,\ell )\cap \overline{B}_j(k,\ell ) \mbox{ holds}\,\right\}
 \geq h^{-1} \cdot \e{-1/(2w)}.
\end{equation}
Observe that the events that $k\in \mathcal{I}_j$ and the event that $ \underline{A}^\alpha(k,\ell )\cap \overline{B}_{ j}(k,\ell )$ holds are independent (see \eqref{z44} and \eqref{t261} for the definitions).
This is so, because $\overline{B}_{ j}(k,\ell )$ excludes the contribution of $Z_j$, while in $\underline{A}^\alpha(k,\ell )$ only processes from $\underline{T}_{h^{-\alpha}}^{k,\ell }$ can contribute. On the other hand, we have assumed that $j \in J_\ell$ and thus $j$ does not belong to $\underline{T}_{h^{-\alpha}}^{k,\ell }$. 

 Combining  \eqref{z87} and \eqref{z88} for
\begin{equation}\label{t260}
  \widetilde{\mathcal{I}}_j^w:=\left\{k\in \mathcal{I}_j^w:
  \underline{A}(k,\ell )\cap \overline{B}_{j}(k,\ell )\mbox{ holds\,}
  \right\}
\end{equation}
yields that for an $\ell $ large enough,
\begin{equation}\label{t259}
  \# \widetilde{\mathcal{I}}_j^w
  \geq \mathcal{N}_j \cdot \e{-1/w}.
\end{equation}
Note that we expect that for any $k\in \widetilde{\mathcal{I}}_j^w
$ will have a sufficiently large increment of $\Delta Z$, since (1) the process $Z_j$ did not jump for a while already, thus it had enough time to increase and thus sustain a sufficiently large loss (2) the increment coming from processes with small intensities is rather small, i.e., $\underline A$ holds (see \eqref{t270}) (3) the processes with relatively large intensities, excluding $Z_j$, have a small increments, i.e., $\overline B_j$ also holds (see \eqref{t261}). The next fact makes the heuristics of (1) precise: any $k\in \mathcal I_j^w$ will actually have a large loss of $Z_j$:
\begin{fact}\label{t242}
Fix an arbitrary $k\in \mathcal{I}_j^w$.
Then
\begin{equation}\label{t250}
  \Delta_{\ell }^{k}Z_j<-2 \cdot h^\alpha.
\end{equation}

\end{fact}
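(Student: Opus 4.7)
The strategy is to extract from the condition $k\in\mathcal{I}_j^w$ the existence of a loss time $t_*=T_u^{(j)}\in I_\ell^k$ with a \emph{long} preceding inter-event time $\tau_u^{(j)}>1/(w\lambda_j)$, which forces the pre-loss value $\Phi:=\Phi_u^{(j)}$ to be large. The loss at $t_*$ then contributes $-(1-\eta)\Phi$ to $\Delta_\ell^k Z_j$, and the point is to show this negative jump dominates the total positive growth of $Z_j$ across the remaining sub-intervals of $I_\ell^k$, yielding $\Delta_\ell^k Z_j<-2h^\alpha$. Write $a:=k/2^\ell$, $b:=(k+1)/2^\ell$. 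If $Z_j$ has losses $a\le t_{i_1}<\cdots<t_{i_p}\le b$ in $I_\ell^k$ (one of them being $t_*$), I decompose
\[
\Delta_\ell^k Z_j \;=\; \sum_{q=0}^{p}\bigl[Z_j(t_{i_{q+1}}^-)-Z_j(t_{i_q})\bigr]\;-\;\sum_{q=1}^{p}(1-\eta)\,Z_j(t_{i_q}^-),
\]
with $t_{i_0}:=a,\,t_{i_{p+1}}:=b$, the first sum collecting the positive growths between losses and the second collecting the (nonpositive) loss contributions.

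\textbf{Step 1 (lower bound on the $t_*$-loss).} Since $g_x(t)\ge c_1 t^\theta$ by \eqref{z48} and since $\tau_u^{(j)}>1/(w\lambda_j)$, the pre-loss value obeys
\[
\Phi \;=\; g_{\Phi_{u-1}^{(j)}}\!\bigl(\tau_u^{(j)}\bigr) \;\ge\; c_1\bigl(\tau_u^{(j)}\bigr)^\theta \;>\;\frac{c_1}{w^\theta\lambda_j^\theta}.
\]
Because $j\in J_\ell$ we have $\lambda_j^\theta< L^{A+1}h^{-(\alpha-\varepsilon_0)}$ by \eqref{t255}, hence
\[
(1-\eta)\Phi \;>\;\frac{(1-\eta)c_1}{w^\theta L^{A+1}}\,h^{\alpha-\varepsilon_0},
\]
which by Assumption~\ref{t239}(c) exceeds $3 h^\alpha$ once $\ell$ is sufficiently large (as $h^{-\varepsilon_0}\to\infty$).

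\textbf{Step 2 (upper bound on the positive growth).} By Assumption~\ref{t239}(a) we may invoke Corollary~\ref{t226} to conclude $Z_j(t)\le\iota_j^\theta$ for all $t\in[0,1]$. In particular, on each growth interval $(t_{i_q},t_{i_{q+1}})$ the process $Z_j$ follows $s\mapsto g_{\Phi_{u_q}^{(j)}}(s-t_{i_q})$ with $g_{\Phi_{u_q}^{(j)}}(\cdot)$ bounded above by $\iota_j^\theta$; thus \eqref{t238} gives $Z_j'(s)\le \psi\theta\,(Z_j(s))^{1-1/\theta}\le\psi\theta\,\iota_j^{\theta-1}$ pointwise. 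Integrating and telescoping across the sub-intervals of $I_\ell^k$ gives the deterministic bound
\[
\sum_{q=0}^{p}\bigl[Z_j(t_{i_{q+1}}^-)-Z_j(t_{i_q})\bigr] \;\le\; \psi\theta\,\iota_j^{\theta-1}\,(b-a) \;=\; \psi\theta\,\iota_j^{\theta-1}\,h.
\]
By Assumption~\ref{t239}(b) (with the exponent $\alpha/(\theta-\alpha)$ in the bound on $\iota_j$ chosen precisely so that after raising to the power $\theta-1$ and multiplying by $h$ one obtains something smaller than $h^\alpha$), this growth term is at most $h^\alpha$ for $\ell$ large.

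\textbf{Combination and main difficulty.} Combining Steps~1 and~2, and discarding the (nonpositive) loss terms at all $t_{i_q}\ne t_*$, I obtain
\[
\Delta_\ell^k Z_j \;\le\; \psi\theta\,\iota_j^{\theta-1}\,h \;-\;(1-\eta)\Phi \;\le\; h^\alpha - 3h^\alpha \;=\;-2h^\alpha,
\]
which is the claim. The main obstacle is in Step~2: one needs the a priori $L^\infty$-bound on $Z_j$ from Corollary~\ref{t226} to turn the differential inequality \eqref{t238} into a uniform bound on $Z_j'$, and one has to book-keep carefully the constants $w,L^{A+1},c_1,\eta,\psi,\theta$, verifying that the specific forms of $\iota_j$ and $c_{33}$ in Assumption~\ref{t239}(b)--(c) are calibrated so that both the ``$3h^\alpha$'' lower bound on $(1-\eta)\Phi$ and the ``$h^\alpha$'' upper bound on $\psi\theta\iota_j^{\theta-1}h$ hold simultaneously. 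The case $\alpha>1$ (possible for $\theta>1$) is where the exponent arithmetic is tightest, and is precisely where Assumption~\ref{t239}(b) is needed.
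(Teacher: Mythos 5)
Your overall architecture matches the paper's proof of Fact \ref{t242} in its key ingredients: you lower-bound the pre-loss value through $g_x(t)\ge c_1t^\theta$ (i.e.\ \eqref{z48}) together with $\tau_u^{(j)}>1/(w\lambda_j)$ and $\lambda_j^\theta\le L^{A+1}h^{-(\alpha-\varepsilon_0)}$ from \eqref{t255}, so that the single drop is of order $h^{\alpha-\varepsilon_0}\gg h^\alpha$, and you control the growth inside $I_\ell^k$ through the derivative bound \eqref{t238}. Your telescoping over \emph{all} losses in $I_\ell^k$ makes explicit the point (left implicit in the paper) that additional losses only help, and your Step 1 is fine; the mismatch of constants with Assumption \ref{t239}(c) ($w$ versus $w+1$, the factor $3$) is harmless because $h^{-\varepsilon_0}\to\infty$.

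The genuine gap is in Step 2, exactly in the case $\alpha\ge 1$ that you single out as the tight one. Your growth bound is $\psi\theta\,\iota_j^{\theta-1}h$, and you claim Assumption \ref{t239}(b) makes this $\le h^\alpha$. But (b) is only a \emph{constant} (independent of $\ell$) upper bound on $\iota_j$, so it gives at best $\mathrm{const}\cdot h$, and $\mathrm{const}\cdot h\le h^\alpha$ is simply false for $\alpha>1$ — a case that does occur in $R_1$ when $\theta>1$ (for $\theta=3$ and $\beta$ near $1$, $\alpha$ ranges up to almost $3$). The exponent $\alpha/(\theta-\alpha)$ in (b) is calibrated for a comparison \emph{relative to the pre-loss value $x$}, which is how the paper argues: there one shows $x-b\le \mathrm{const}\cdot x^{\,1-1/\theta+1/(\alpha-\varepsilon_0)}\le\frac{1-\eta}{10}x$ (using $x\le\iota_j^\theta$ and (b)) and $a\le x\big(\eta+\frac{1-\eta}{10}\big)$, so the within-interval growth is a small fraction of the drop $(1-\eta)x$, and only at the very end is $x$ compared with $h^\alpha$ via Assumption (c). Your absolute route can be repaired, but not by (b) alone: for $j\in J_\ell$ one has $\lambda_j>h^{-(\alpha-\varepsilon_0)/\theta}$, hence by \eqref{t224} $\iota_j\le C\,\ell\, h^{(\alpha-\varepsilon_0)/\theta}$, so the growth is at most $C'\ell^{\theta-1}h^{\,1+(\alpha-\varepsilon_0)(1-1/\theta)}$; the strict inequality $1+(\alpha-\varepsilon_0)(1-1/\theta)>\alpha$, equivalent to $\alpha/\theta+\varepsilon_0(1-1/\theta)<1$, follows from \eqref{t215} since $\beta\ge1$, and then the polynomial factor in $\ell$ is absorbed for $\ell$ large. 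Either add this use of the lower bound on $\lambda_j$, or switch to the paper's relative bounds; as written, Step 2 does not go through for $\alpha\ge1$.
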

\begin{proof}[Proof of Fact \ref{t242}]
We use the notation of Figure \ref{z47}.
  \begin{figure}[H]
  \includegraphics[width=11cm]{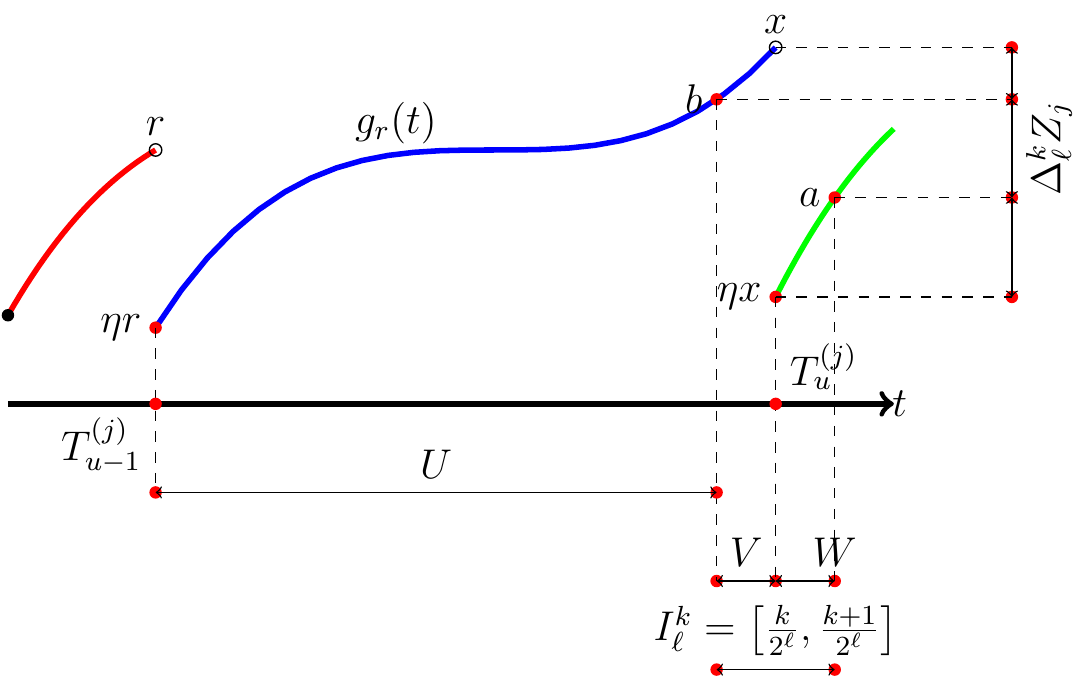}
\caption{Important notation for a process $Z_j$ for the proof of the spectrum on the region $R_1$.}\label{z47}
\end{figure}
Since we are on region  $R_1$, $\alpha<\theta$, therefore
\begin{equation}\label{t245}
h\ll h^{(\alpha-\varepsilon_0)/\theta}L^{-(A+1)/\theta}
 < \lambda_{j}^{-1}
 <
 h^{(\alpha-\varepsilon_0)/\theta}.
\end{equation}
So, the length of interval $U=k/2^\ell - T_{u-1}^{(j)}$ satisfies $|U|>\frac{1}{4\lambda_j}$ (see Figure \ref{z47}). Using this, \eqref{z48} and \eqref{t245}, we obtain that $x$, the position of the process $Z_j$ right before the loss at time $T_u^{(j)}$ satisfies
\begin{equation}\label{t246}
 x \geq\frac{c_1}{4^\theta L^{A+1}}h^{\alpha-\varepsilon_0}.
\end{equation}
We denote $b:=Z_j(k/2^\ell)$ and $a:=Z_j(k+1/2^\ell)$. Then $\Delta_{\ell }^{k}Z_j$ can be estimated as follows
\begin{equation}\label{t244}
  \Delta_{\ell }^{k}Z_j=a-b=-x +a+(x-b)
\end{equation}
We have already estimated $-x$ from above in
\eqref{t246}.

\emph{To estimate $a$}: Note that by \eqref{t123} $a \leq g_x(h)=x g_1\left(h/x^{1/\theta}\right)$.
 Note, by \eqref{t246},
 \[h/x^{1/\theta} \leq c_{1}^{-1/\theta}4L^{(A+1)/\theta}h^{1-\alpha/\theta
 +\varepsilon_0/\theta
 }.\] This expression is sufficiently small if $\ell $ is large, that is $h=2^{-\ell }$ is small. Using that $g_1(0)=\eta$ for $\ell $ large enough we get
 $$
 a \leq x\left(\eta+\frac{1-\eta}{10}\right).
 $$

\emph{To estimate $x-b$:}  Observe that by \eqref{t238}
we have
\begin{equation}\label{t247}
  \max\limits_{x\in V}\frac{d}{dt}g_r(t) \leq \psi\theta x^{1-1/\theta},
\end{equation}
where the interval $V$ is defined on Figure \ref{z47}.
Combining this  with \eqref{t246} we obtain that
\begin{equation}\label{t248}
 x-b \leq h \cdot   \max\limits_{x\in V}\frac{d}{dt}g_r(t) \leq
 \mathrm{const} \cdot x^{1-1/\theta+1/(\alpha-\varepsilon_0)}
\end{equation}
Using Assmption \ref{t239} (b)  we get
\begin{equation}\label{t249}
  x-b \leq x \cdot \frac{1-\eta}{10}.
\end{equation}
 Combining the three estimates on $-x$, $a$ and $x-b$ and using  Assumption \ref{t239} (c) we obtain that \eqref{t250} holds.

\end{proof}

Combining \eqref{t228}, \eqref{t261} and \eqref{t250} yields
\begin{equation}\label{t262}
\forall j\in J_\ell ,\   \forall k\in \widetilde{\mathcal{I}}^w_j,\quad
  \Delta_{\ell }^{k}Z<-h^\alpha.
\end{equation}
Now, $\widetilde I_j$ is the set of $2^{-\ell}$-mesh intervals for the process $Z_j, j \in J_\ell$ with a certain good property. Now we take the union of these intervals for different $j$-s to obtain all the $2^{-\ell}$-mesh intervals that contain this good property for some $j\in J_\ell$. Let $K_\ell$ be their number, that is,
\begin{equation}\label{k-ell} K_\ell :=\#\bigcup\limits_{j\in J_\ell }\widetilde{\mathcal{I}}^w_j.\end{equation}
\begin{fact}\label{t253}
  If $\ell $ is large enough then for $h=2 ^{-\ell }$ we have

\begin{equation*}
  K_\ell
    \geq \frac{1}{16}  h^{(-\alpha+\varepsilon_0)(\beta-1-\varepsilon_0)} \cdot
    h^{-\alpha/\theta+\varepsilon_0/\theta}
  =\frac{1}{16}
  h^{-\alpha(\beta-(1-1/\theta))} \cdot h^{\varepsilon_0
  (\beta-(1-1/\theta))-\varepsilon_0^2}.
\end{equation*}
\end{fact}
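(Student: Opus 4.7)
The plan is to apply a second-moment (Paley--Zygmund-type) argument. For each $2^{-\ell}$-mesh interval $I_\ell^k$, let $X_k := \#\{j \in J_\ell : k \in \widetilde{\mathcal{I}}^w_j\}$. Then $K_\ell = \#\{k : X_k \ge 1\}$ and by Cauchy--Schwarz,
\[
K_\ell \;\ge\; \frac{\bigl(\sum_k X_k\bigr)^2}{\sum_k X_k^2}
\;=\;
\frac{\bigl(\sum_{j\in J_\ell} \#\widetilde{\mathcal{I}}^w_j\bigr)^2}
{\sum_{j\in J_\ell}\#\widetilde{\mathcal{I}}^w_j + \sum_{j\ne j'}\#(\widetilde{\mathcal{I}}^w_j\cap \widetilde{\mathcal{I}}^w_{j'})}.
\]
So I need a lower bound on the first moment $S_1:=\sum_j \#\widetilde{\mathcal{I}}^w_j$ and an upper bound on the pairwise overlap.

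For the first moment, combine \eqref{t259} with event $E_1^{(j)}$ of Fact~\ref{t222}, which on a full-measure set gives $\mathcal{N}_j\ge\lambda_j/2$ for all $j>j_0$. Since $j\in J_\ell$ forces $\lambda_j > h^{-(\alpha-\varepsilon_0)/\theta}$ by \eqref{t255}, one gets
\[
S_1 \;\ge\; \#J_\ell\cdot \tfrac{e^{-1/w}}{2}\, h^{-(\alpha-\varepsilon_0)/\theta},
\]
and then \eqref{t241} yields $S_1\ge c\, h^{(\varepsilon_0-\alpha)(\beta-1-\varepsilon_0)}\cdot h^{-(\alpha-\varepsilon_0)/\theta}$, which is already the right-hand side of the claimed bound (up to the constant $1/16$). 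For the overlap, use the trivial inclusion $\widetilde{\mathcal{I}}^w_j\subset Q_j$ and the fact that the indicators $\{k\in Q_j\}_k$ are independent across $k$ (Poisson property on disjoint intervals) and that $\{Q_j\}_{j\in J_\ell}$ are independent across $j$ (independence of the sources). For $j\ne j'$,
\[
\E\bigl[\#(Q_j\cap Q_{j'})\bigr]
=\sum_{k=0}^{2^\ell-1}(1-e^{-\lambda_j h})(1-e^{-\lambda_{j'}h})
\;\le\; h\lambda_j\lambda_{j'},
\]
and summing,
\[
\E\Bigl[\sum_{j\ne j'}\#(Q_j\cap Q_{j'})\Bigr]
\;\le\; h\Bigl(\sum_{j\in J_\ell}\lambda_j\Bigr)^2
\;\le\; C\, h^{\,1-2(\alpha-\varepsilon_0)\left(\beta-(1-1/\theta)+\varepsilon_0\right)},
\]
where the last step uses the upper bound \eqref{o81} on $\#J_\ell$ together with $\lambda_j\le L^{(A+1)/\theta}h^{-(\alpha-\varepsilon_0)/\theta}$.

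To turn the expectation bound into an almost-sure bound valid for all $\ell$ large, I apply the Markov inequality of Lemma~\ref{o72} to the random variable $\sum_{j\ne j'}\#(Q_j\cap Q_{j'})$ with $N=h^{-\varepsilon_0}$ and then Borel--Cantelli along $\ell=1,2,\ldots$ (as in the proof of Corollary~\ref{t265}); this is where the regularity assumption ensures everything stays summable. Once both bounds hold almost surely for all $\ell>\ell_2$, I compare exponents: on $R_1$ we have $\alpha(\beta-(1-1/\theta))<1$, so
\[
1-2\alpha(\beta-(1-1/\theta)) \;>\; -\alpha(\beta-(1-1/\theta)),
\]
which means the overlap term is of strictly smaller order than $S_1$. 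Hence, for $\ell$ large, $\sum_k X_k^2 \le 2 S_1$ and the Cauchy--Schwarz estimate gives $K_\ell \ge S_1/2$, which after tracking the explicit constants ($1/2$ from $E_1^{(j)}$, $e^{-1/w}\ge 1/2$, and another $1/2$ from Cauchy--Schwarz) yields the claimed factor $1/16$. The main technical obstacle is the last step: showing that the fluctuations of the pairwise-overlap sum can be controlled uniformly in $j,j'\in J_\ell$ without paying a union-bound price proportional to $(\#J_\ell)^2$; this is precisely why I work with the aggregate overlap rather than individual pairs.
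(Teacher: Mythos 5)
Your argument is correct in substance, but it takes a genuinely different route from the paper at the key combinatorial step. Both proofs share the same inputs: the lower bound $\#\widetilde{\mathcal{I}}^w_j\ge \mathcal N_j e^{-1/w}$ from \eqref{t259}, the count $\#J_\ell$ from \eqref{t241}, and $\mathcal N_j\ge \lambda_j/2$ with $\lambda_j>h^{-(\alpha-\varepsilon_0)/\theta}$ from $E_1^{(j)}$ and \eqref{t255}; the whole content of the fact is that the sets $\widetilde{\mathcal{I}}^w_j$, $j\in J_\ell$, do not overlap too much, so that the union retains a positive fraction of the total count. The paper handles this (see \eqref{z90}) by a balls-in-urns stochastic domination: throwing $S_\ell$ balls into $2^\ell$ urns with $S_\ell\ll 2^\ell$, the number of occupied urns is at least $S_\ell/2$ with summable failure probability, and then the difference $\bigcup Q_j\setminus\bigcup\widetilde{\mathcal{I}}^w_j$ is removed using \eqref{t259}. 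You instead run a second-moment (Cauchy--Schwarz) argument on $X_k=\#\{j: k\in\widetilde{\mathcal{I}}^w_j\}$, bounding the cross term through $\widetilde{\mathcal{I}}^w_j\subset Q_j$, the exact Poisson computation $\mathbb{E}[\#(Q_j\cap Q_{j'})]\le h\lambda_j\lambda_{j'}$, the upper bound \eqref{o81} on $\#J_\ell$, and Markov plus Borel--Cantelli on the aggregate overlap; the condition $\alpha(\beta-(1-1/\theta))<1$ defining $R_1$ enters for you as $1-2\alpha\beta'>-\alpha\beta'$, whereas in the paper it enters as $S_\ell\ll 2^\ell$. Your route is more explicit and quantitative (the paper's ``it is easy to see'' urn estimate is left to the reader), at the price of a second-moment computation; the paper's route is shorter. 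One small bookkeeping caveat: once the Markov factor $h^{-\varepsilon_0}$ and the $\varepsilon_0$-corrections in the exponents are included, your comparison of exponents needs roughly $\alpha\beta'+(3\alpha+1)\varepsilon_0<1$, which is slightly stronger than \eqref{t215}; this is harmless because $\varepsilon_0$ is taken arbitrarily small in the proof of the lower bound \eqref{o126}, but you should state the required smallness of $\varepsilon_0$ explicitly rather than drop the $\varepsilon_0$-terms when comparing orders.
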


We postpone to prove Fact \ref{t253} and we finish the proof of the lower bound on $f_g(\alpha)$ given Fact \ref{t253}.
\begin{proof}[Proof of \eqref{o126}]
Fix a small $\varepsilon>0$. Choose an $\varepsilon_0$ satisfying \eqref{t215} and
\begin{equation}\label{t266}
  \varepsilon_0<\varepsilon \cdot \frac{\beta-(1-1/\theta)}
  {1+\alpha+\beta-(1-1/\theta)}.
\end{equation}
By Fact \ref{t253} and by \eqref{t262}
\begin{equation*}\label{t263}
  \#\left\{k<2^\ell :
|\Delta_{\ell }^{k }Z|>h^{\alpha}\right\}
 \geq  \frac{1}{16} \cdot
  h^{-\alpha(\beta-(1-1/\theta))} \cdot h^{\varepsilon_0
  (\beta-(1-1/\theta))-\varepsilon_0^2}.
\end{equation*}
Apply Corollary  \ref{t265}  with replacing  $\alpha$ by $\alpha-\varepsilon$. Then using \eqref{t266} we obtain
\[ \begin{aligned}
  \#\left\{
  k<2^\ell :
  |\Delta_{\ell }^{k}Z|>h^{\alpha-\varepsilon}
  \right\}
&<
6c_6(\varepsilon_0)h^{-\alpha(\beta-(1-1/\theta))-\varepsilon_0
+\varepsilon(\beta-(1-1/\theta))}\\
&<\frac{1}{32} \cdot
  h^{-\alpha(\beta-(1-1/\theta))} \cdot h^{\varepsilon_0
  (\beta-(1-1/\theta))-\varepsilon_0^2}
\end{aligned} \]
whenever $\ell $ is large enough.  This yields that
\begin{equation*}
   \#\left\{k<2^\ell :
|\Delta_{\ell}^{k }Z|\in\left(h^{\alpha},h^{\alpha-\varepsilon}\right)\right\} \\
  >\frac{1}{32} \cdot
  h^{-\alpha(\beta-(1-1/\theta))} \cdot h^{\varepsilon_0
  (\alpha+\beta-(1-1/\theta))-\varepsilon_0^2}.
\end{equation*}

This immediately implies that \eqref{o126} holds.
\end{proof}

\begin{proof}[Proof of Fact \ref{t253}]
Recall that we defined the set $Q_j$  in \eqref{z89} as the $2^\ell$-mesh intervals where $Z_j$ has a loss and that $J_\ell$ defined in \eqref{t255} collects those processes that have the right intensity for our purpose. First we prove that
\begin{equation}\label{z90}
 \# \bigcup\limits_{j\in J_\ell }Q_j > \frac{1}{2}S_\ell ,
\end{equation}
holds almost surely for a sufficiently large $\ell $, where
$$
S_\ell :=\sum\limits_{j\in J_\ell }\#\mathcal{N}_j.
$$
Basically \eqref{z90} means that the intervals where the processes in $J_\ell$ jump do not have too much overlap, that is, at least half of the total number of jumps are kept when taking the union.
On the other hand, by \eqref{t259} (see also \eqref{t260})
\begin{equation}\begin{aligned}\label{z91}
  \#\left(\bigcup\limits_{j\in J_\ell }Q_j\setminus\bigcup\limits_{j\in J_\ell }
  \widetilde{\mathcal{I}}_{j}^{w} \right)
 &\leq   \#\bigcup\limits_{j\in J_\ell }\left(
 Q_j\setminus \widetilde{\mathcal{I}}_{j}^{w}\right)\\
  &\leq
      \sum\limits_{j\in J_\ell }\#\left(
 Q_j\setminus \widetilde{\mathcal{I}}_{j}^{w}
 \right)\\
 &\leq
 (1- \e{-1/w})\sum\limits_{j\in J_\ell }\mathcal{N}_j\\
 &=S_\ell  \cdot (1-\e{-1/w}).
\end{aligned}\end{equation}
Recall $K_\ell$ from \eqref{k-ell}. Using the identity
\begin{equation}\label{z92}
  \bigcup\limits_{j\in J_\ell }\widetilde{\mathcal{I}}_{j}^{w}=
   \bigcup\limits_{j\in J_\ell }Q_j\setminus
   \left[ \bigcup\limits_{j\in J_\ell }Q_j\setminus \bigcup\limits_{j\in J_\ell }\widetilde{\mathcal{I}}_{j}^{w}\right],
\end{equation}
in combination with \eqref{z90} and \eqref{z91} yields that
\begin{equation}\label{z93-immed}
    K_\ell  \geq \left(\frac{1}{2}-(1-\e{-1/w})\right) \cdot S_\ell
     \geq  \frac{ S_\ell}{4}  \geq \frac{1}{4}\#J_\ell\cdot \min\limits_{j\in J_\ell }\mathcal{N}_j
     \end{equation}
  Further, \eqref{t241} and \eqref{e1juli} as well as \eqref{t255} yield that
   \begin{equation}\label{z93}
   K_\ell \ge \frac{1}{4}h^{-(\alpha-\varepsilon_0)(\beta-1-\varepsilon_0)} \cdot
   \frac{\lambda_j}{2} \geq
   \frac{1}{8}h^{-(\alpha-\varepsilon_0)(\beta-1-\varepsilon_0)} \cdot
   h^{-(\alpha-\varepsilon_0)/\theta}.
\end{equation}
So, to complete the proof of Fact \ref{t253} we only need to verify that \eqref{z90} holds.
To do this, imagine that we have $S_\ell $ balls that we throw into $2^\ell $ urns independently. The number $B_\ell $ of non-empty urns stochastically dominates $\#\cup \widetilde{\mathcal{I}}_j^w$ (Since in $\widetilde{\mathcal{I}}_j^w$ we have the extra restriction that the previous jump had to happen relatively long ago). Using that
$S_\ell <10^{-6} \cdot 2^\ell $ (implying that the probability that two balls fall into the same urn is small) it is easy to see that
$$
\sum\limits_{\ell }\mathbb{P}\left\{B_\ell <\frac{S_\ell }{2}\right\}<\infty.
$$
So, by Borel-Cantelli Lemma for $\ell $ large enough, the assertion of Fact \ref{t253} holds.
\end{proof}


\section{Upper bound for $f_g$ on region $R_2$}\label{z104}
\subsection{The upper bound}
Let us write $\beta':=\beta-\left(1-\frac{1}{\theta}\right)$. We work now on region $R_2$. That is
\begin{equation}\label{z53}
  \frac{1}{\beta'} \leq \alpha<1+\frac{1}{\beta'}.
\end{equation}
Our aim is to verify that on this region we have
\begin{equation}\label{z66}
f_g(\alpha) \leq 1+\frac{1}{\beta'}-\alpha.
\end{equation}

In this section first we show that on region $R_2$ we have
\begin{equation}\label{z66-osc}
f_g^O(\alpha) \leq 1+\frac{1}{\beta'}-\alpha,
\end{equation}
then we quickly derive the same estimate for $f_g(\alpha)$ on the lower part of this region, on $R_2^\ell$.

We start defining
\begin{equation}\label{jhe}
\mathfrak{J}_{h,\varepsilon}:=\left(-h^{\alpha-\varepsilon},-h^{\alpha+\varepsilon}\right)
\bigcup
\left(h^{\alpha+\varepsilon},h^{\alpha-\varepsilon}\right),
\end{equation}
the sizes of increments/oscillations that have the right absolute value when counting the $N_\ell^\varepsilon(\alpha)$ for $f_g(\alpha)$ or $N_\ell^{\varepsilon, \mathrm{O}}(\alpha)$ for $f_g^{\mathrm O}(\alpha)$, respectively.
First we  prove that for every $\ell $ big enough and $k<2^\ell $ we have
\begin{equation}\label{t273}
  \mathbb{P}\left(\mathrm O^k_\ell Z
  \in \mathfrak{J}_{h,\varepsilon}
  \right)\sim h^{\alpha-1/\beta'}.
\end{equation}
Then by Lemma \ref{o72} we conclude that $N_{\ell }^{\varepsilon, \mathrm{O}}(\alpha)$ (defined similarly as in \eqref{o121}, but with $\mathrm O_{\ell}^k Z$ instead) satisfies
\begin{equation}\label{t274}
  N_{\ell }^{\varepsilon, \mathrm{O}}(\alpha)  \lesssim
  h^{-1+\alpha-1/\beta'},
\end{equation}
where as always $h=2^\ell $. Take  logarithm on both sides and divide by $\log h^{-1}$ to obtain that
\eqref{z66} holds.

In the rest of the section we make these heuristics precise and prove \eqref{t274}. The line of the proof  is the same as the one of the corresponding statement in \cite{Rams2012}.

The rest of the section is organized as follows: First in Section \ref{z68}  we introduce some notation used in this section. In particular we define the notion of a \emph{good loss}. Then in Section \ref{z69} we explain the heuristics of the proof.
As an important technical step of the proof, in  Section \ref{z70} we verify that for every net interval $I_{\ell }^{k}$ there exists a process $Z_j$ with intensity that satisfies $\lambda_{j}^{\theta}\sim h^{-\frac{1}{\beta'}}$ and $Z_j$  has a good loss in  $I_{\ell }^{k}$.
Finally we present the proof of \eqref{z66} in Section \ref{z75}.

For technical reasons, we need to divide $R_2$ into a lower part $R_2^{\ell }$ (defined as $\alpha\le \beta/\beta'$) and an upper part  $R_2^{u}$ (defined as $\alpha > \beta/\beta'$).
We have the upper bound for the \emph{oscillations $O^k_\ell Z$} and thus obtain that $f_g^{\mathrm O}(\alpha)$ satisfies \eqref{z66-osc}.

On the lower part  $R_2^{\ell }$, we derive estimates along the lines so that it will be easy to see that $O_\ell^k Z-|\Delta_\ell^k Z|$ is relatively small compared to $h^\alpha$, hence, we immediately obtain that on this region, $f_g(\alpha)$ satisfies the same upper bound as $f_g^{\mathrm O}$ does, thus we obtain \eqref{z66}.

Unfortunately, on the upper part of the region $R_2^{u}$, the difference between the oscillation and the increment is typically bigger than $h^\alpha$, that is, we have $O^k_\ell Z-|\Delta _\ell^k Z|\gg h^\alpha$. This makes it impossible to transfer our result on $f_g^{\mathrm O}(\alpha)$ to $f_g(\alpha)$ on this region.

We remark however that the `sizes of bursts' are rather determined by the oscillations, not by the increments of $Z$, thus, we believe that this upper bound on $f_g^{\mathrm O}(\alpha)$ is just as relevant for practical purposes as the bound would have been on $f_g(\alpha)$.

\subsubsection{Good losses}\label{z68}

\begin{figure}[H]
  \includegraphics[width=9cm]{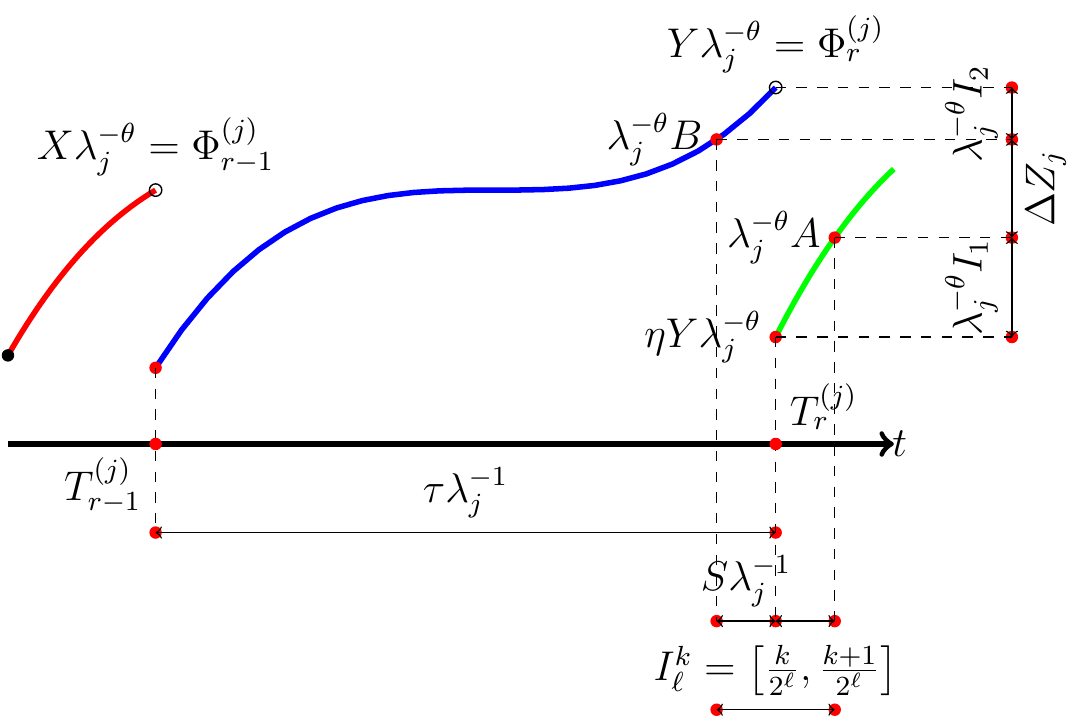}
\caption{Important notations for a process $Z_j$ for the proof of the spectrum on $R_2$.}\label{t289}
\end{figure}Fix  a $j$ and $r$ such that $T_{r}^{(j)}$, the time of r-th loss of $Z_j$ is in $(0,1)$.
The variables defined below are dependent on $j$ and $r$ but in order to simplify the notation we suppress them. Recall that the order of magnitude of a process $Z_j$ with intensity $\lambda_j$ is $\lambda_j^{-\theta}$.
Thus it is reasonable to scale back and define
\begin{equation}\label{t287}
  X:=\lambda_j^{\theta}\Phi_{r-1}^{(j)},\
  \tau:=\lambda_j\tau_{r}^{(j)}
  \mbox{ and }
  Y:=\lambda_j^{\theta}\Phi_{r}^{(j)}=g_X(\tau).
\end{equation}
Moreover,
\begin{equation*}\label{t290}
  A:=\lambda_{j}^{\theta} Z_j\left(\frac{k+1}{2^\ell }\right)
  \mbox{ and }
  B:=\lambda_{j}^{\theta} Z_j\left(\frac{k}{2^\ell }\right)
\end{equation*}
Finally, let
\begin{equation}\label{t291}
  I_1:=Y-B
  \mbox{ and }
  I_2:=A-\eta \cdot Y
\end{equation}
With these notation it is easy to see that the increment of $Z_j$ on the interval $I_\ell^k$ can be written in the form
\begin{equation}\label{t292}
  \Delta Z_j=\lambda_{j}^{-\theta}\left(\left(1-\eta\right)Y
  -
  \left(I_1+I_2\right)\right).
\end{equation}
while the oscillation of $Z_j$ is simply given by
\begin{equation}\label{t292-osc}
  \mathrm O Z_j=\lambda_{j}^{-\theta}\left(1-\eta\right)Y.
\end{equation}
\begin{definition}\label{z71}
Let
\begin{equation}\label{t298}
  G_\delta:=\left\{t:g'_1(t)<\delta\right\}.
\end{equation}
Fix a $j$ and an $r$ such that the $r$-th loss $T_{r}^{(j)}$ of $Z_j$ is in $(0,1)$.
 Let $\mathbf{K}:=(K_1, \dots ,K_5)$, where all components are positive numbers.
We say that $T_{r}^{(j)}$  is $(\mathbf{K},\delta_0)$-regular if
\begin{description}
  \item[(i)] the corresponding three consecutive inter-event times (see \eqref{t197}) all satisfy
\begin{equation}\label{t282}
 \frac{1}{K_1 \cdot \lambda_{j}}
<
\tau_{r-1}^{(j)},\tau_{r}^{(j)},\tau_{r+1}^{(j)}
<
K_2 \cdot \frac{1}{\lambda_{j}},
\end{equation}
\item[(ii)] $\mathcal{L}\mathrm{eb}(G_{\delta_0})<1/K_3$
  \item[(iii)] Further, $X$ as defined in \eqref{t287} satisfies that
\begin{equation}\label{t283}
  X<K_4 ,
\end{equation}
where $K_4$ satisfies
\begin{equation}\label{z58}
  \pi(K_4,\infty )<K_5.
\end{equation}
  \item[(iv)] Moreover we also require that
\begin{equation}\label{t300}
  \frac{\tau}{X^{1/\theta}}\not\in G_{\delta_0}.
\end{equation}
\end{description}
If {\bf(i)-(iv)} hold then we also say that $Z_j$ has a $(\mathbf{K},\delta_0)$ loss in $I_{\ell }^{k}$.
\end{definition}
\begin{fact}\label{z72}We use the notation of Definition \ref{z71}.
  For any $v\in(0,1)$ we can select a vector $\mathbf{K}=\mathbf{K}^{(v)}$ with sufficiently large components and a sufficiently small positive number $\delta_0=\delta_{0}^{(v)}$ such that
 \begin{equation}\label{z73}
\mathbb{P}\left(T_{r}^{(j)}\mbox{ is not $(\mathbf{K}^{(v)},\delta_0^{(v)})$-regular }\right)<v.
\end{equation}
\end{fact}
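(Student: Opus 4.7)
The plan is to decouple the four requirements (i)--(iv) in Definition~\ref{z71} and orchestrate the constants so that each either holds deterministically or fails with probability at most $v/4$; a union bound then yields the claim. Because the inequality used to control (iv) forces $K_3$ to be large relative to $K_4^{1/\theta}$, and $\delta_0$ in turn must be small relative to $K_3$, the natural order of choices is $K_4,K_5\to K_3\to \delta_0\to K_1,K_2$. The crucial structural input I will use is that $X=\lambda_j^{\theta}\Phi_{r-1}^{(j)}$ is independent of $\tau=\lambda_j\tau_r^{(j)}$: indeed $\Phi_{r-1}^{(j)}$ is measurable with respect to the events of the underlying Poisson process up to time $T_{r-1}^{(j)}$, whereas $\tau_r^{(j)}\sim\mathrm{Exp}(\lambda_j)$ is independent of that $\sigma$-algebra by the strong Markov property. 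Moreover, by the scaling identity \eqref{t177} combined with the stationarity convention at the end of Section~\ref{z37}, $X$ is distributed according to the reference stationary measure $\pi$, independently of $j$, and $\lambda_j\tau_r^{(j)}\sim\mathrm{Exp}(1)$.

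I would handle the straightforward conditions first. For (iii), Theorem~\ref{t131}(b) gives $\int V\,d\pi<\infty$ with $V(x)=e^{\tilde c\,x^{1/\theta}}$, so by Markov's inequality $\pi((K_4,\infty))\to 0$ as $K_4\to\infty$; I pick $K_4$ large enough that this tail is below $K_5\wedge v/8$. For the deterministic condition (ii), I then choose $\delta_0$ small: by (A3) $g_1'$ has finitely many zeros and is continuous, while by (A2b) $g_1(t)\ge c_1 t^{\theta}$ together with the smoothness of $g_1$ forces $g_1'$ to be eventually bounded below, so $\{g_1'<\delta_0\}$ consists of finitely many shrinking neighborhoods of the zeros of $g_1'$ plus a bounded tail, giving $\mathcal L\mathrm{eb}(G_{\delta_0})\downarrow 0$ as $\delta_0\downarrow 0$. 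Condition (i) is immediate once $K_1,K_2$ are chosen large: the three rescaled variables $\lambda_j\tau_{r-1}^{(j)},\lambda_j\tau_r^{(j)},\lambda_j\tau_{r+1}^{(j)}$ are i.i.d.\ $\mathrm{Exp}(1)$, so $\mathbb P(\mathrm{Exp}(1)\not\in[1/K_1,K_2])=1-(e^{-1/K_1}-e^{-K_2})\to 0$, and a union bound over three variables renders the failure probability of (i) below $v/4$.

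The main step is (iv). Conditioning on $X=x$ with $x<K_4$, the independent ratio $\tau/x^{1/\theta}$ has density $x^{1/\theta}e^{-x^{1/\theta}s}\mathbf 1_{s>0}$, whose $L^{\infty}$-norm is $x^{1/\theta}\le K_4^{1/\theta}$. Integrating over $G_{\delta_0}$, then over $\pi$, and treating the tail $\{X\ge K_4\}$ separately gives
\begin{equation*}
\mathbb P\bigl(\tau/X^{1/\theta}\in G_{\delta_0}\bigr)\le \pi((K_4,\infty))+K_4^{1/\theta}\,\mathcal L\mathrm{eb}(G_{\delta_0})\le K_5+K_4^{1/\theta}/K_3,
\end{equation*}
which is less than $v/4$ once $K_3>8K_4^{1/\theta}/v$ and $K_5<v/8$; these are precisely the constraints that dictated the order of choices above. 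The main obstacle I anticipate is the deterministic step (ii): asserting $\mathcal L\mathrm{eb}(G_{\delta_0})\to 0$ in the full generality of Definition~\ref{z16}. In the polynomial case of Example~\ref{z15} it is transparent because $g_1'$ is itself a polynomial, but for a general smooth $g_1$ satisfying only (A1)--(A3) one must combine $g_1(t)\ge c_1 t^{\theta}$ with (A2a) to rule out arbitrarily long intervals at infinity on which $g_1'$ stays below $\delta_0$. Once this is established, the four estimates combine by a union bound to give the stated bound $v$.
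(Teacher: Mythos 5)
Your write-up is correct, and it is considerably more than the paper itself offers: the paper's entire proof of Fact~\ref{z72} is the sentence that it ``is immediate from the definitions'', so there is no written argument to diverge from. What you supply is exactly the natural filling-in of that claim: a union bound over (i)--(iv) of Definition~\ref{z71}; the observation that $X=\lambda_j^{\theta}\Phi_{r-1}^{(j)}$ is distributed according to the reference stationary measure $\pi$ (by the convention $\Phi_0^{(j)}\sim\pi_j$ and the scaling \eqref{t177}) and is independent of $\tau=\lambda_j\tau_r^{(j)}\sim\mathrm{Exp}(1)$; the tail bound for (iii) from Theorem~\ref{t131}(b); the i.i.d.\ $\mathrm{Exp}(1)$ computation for (i); and, for the only step requiring real work, (iv), the conditional density bound $x^{1/\theta}e^{-x^{1/\theta}s}\le K_4^{1/\theta}$ on $\{X=x\le K_4\}$, which correctly forces the order of choices $K_4,K_5\to K_3\to\delta_0\to K_1,K_2$ that you describe.

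The one caveat concerns your justification of step (ii): the claim that (A2b) together with smoothness ``forces $g_1'$ to be eventually bounded below'' does not follow from (A1)--(A3). The sandwich $c_1t^{\theta}\le g_1(t)\le(\eta^{1/\theta}+\psi t)^{\theta}$ still allows a $C^{\infty}$ nondecreasing $g_1$ whose derivative dips below any fixed $\delta_0$ on infinitely many unit intervals (compensated by fast growth in between), in which case $\mathcal{L}\mathrm{eb}(G_{\delta_0})=\infty$ for every $\delta_0>0$ and condition (ii) can never be met, so the Fact itself would fail. In other words, the smallness of $\mathcal{L}\mathrm{eb}(G_{\delta_0})$ as $\delta_0\downarrow0$ is an implicit extra hypothesis of the paper (it does hold in Example~\ref{z15}, where $g_1'$ is either a positive constant or tends to infinity, and in particular for TCP CUBIC); you should state it as an assumption rather than try to derive it from (A2). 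This is a gap in the paper's stated generality rather than in your argument relative to the paper, and you correctly identified it as the delicate point; everything else in your proof goes through as written.
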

The proof of Fact \ref{z72} is immediate from the definitions.
\begin{definition}
If $T_{r}^{(j)}$ is called a $v$-good loss if
$(\mathbf{K}^{(v)},\delta_0^{(v)})$-regular.
\end{definition}

The next fact analyses the properties of the increment of $Z_j$ on a $2^\ell$-mesh interval given that $Z_j$ has a $(\mathbf K, \delta_0)$-regular loss there.
 \begin{fact}\label{z61}
   Let $k<2^\ell $ and assume that $Z_j$ has a $(\mathbf{K},\delta_0)$-regular  loss on $I_{\ell }^{k}$. Let $X,Y$ be as in \eqref{t287}. Then
   \begin{equation}\label{z59}
X,Y \geq \frac{c_1}{K_1^\theta}.
\end{equation}
and
\begin{equation}\label{z60}
  Y \leq K_{66}:=K_4 \cdot g_1\left(\frac{K_2 \cdot K_1^\theta}{c_1}\right).
\end{equation}
 \end{fact}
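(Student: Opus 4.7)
My strategy is to reduce the Fact to two earlier ingredients: the self-affine identity $g_x(t)=x\,g_1(t/x^{1/\theta})$ from \eqref{t123} and the polynomial lower bound $g_x(u)\ge c_1 u^\theta$ from \eqref{z48}. Condition (i) of $(\mathbf{K},\delta_0)$-regularity, combined with the rescaling $X=\lambda_j^\theta\Phi_{r-1}^{(j)}$, $\tau=\lambda_j\tau_r^{(j)}$, $Y=g_X(\tau)$ in \eqref{t287}, gives two-sided constant bounds on $\tau$ that depend only on $K_1$ and $K_2$. Conditions (ii) and (iv) (the `bad set' $G_{\delta_0}$) play no role in this Fact; they will intervene later, where non-degeneracy of $g_1'$ is needed.

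For the two lower bounds I would first observe that by \eqref{z48},
\[
Y \;=\; g_X(\tau)\;\ge\; c_1\tau^\theta,
\]
and $\tau=\lambda_j\tau_r^{(j)}>1/K_1$ by condition (i), giving $Y\ge c_1/K_1^\theta$ immediately. The very same argument, applied one step earlier, delivers the bound on $X$: since $\Phi_{r-1}^{(j)}=g_{\Phi_{r-2}^{(j)}}(\tau_{r-1}^{(j)})\ge c_1(\tau_{r-1}^{(j)})^\theta$ by \eqref{z48}, multiplying through by $\lambda_j^\theta$ and invoking condition (i) for $\tau_{r-1}^{(j)}>1/(K_1\lambda_j)$ yields
\[
X \;=\; \lambda_j^\theta \Phi_{r-1}^{(j)} \;\ge\; \lambda_j^\theta \cdot \frac{c_1}{(K_1\lambda_j)^\theta} \;=\; \frac{c_1}{K_1^\theta}.
\]

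For the upper bound on $Y$ I would then unfold the self-affine decomposition $Y=g_X(\tau)=X\cdot g_1(\tau/X^{1/\theta})$. Since $g_1$ is monotone increasing by (A3), it suffices to bound $\tau/X^{1/\theta}$ from above. Condition (i) gives $\tau<K_2$, while the lower bound on $X$ just established gives $X^{1/\theta}\ge c_1^{1/\theta}/K_1$; dividing yields
\[
\frac{\tau}{X^{1/\theta}} \;\le\; \frac{K_1 K_2}{c_1^{1/\theta}},
\]
which after folding constants (and a harmless $\theta$-th power) sits inside the argument of $g_1$ as in the stated $K_{66}$. Combined with (iii), $X<K_4$, this produces $Y\le K_4\,g_1\!\bigl(K_2 K_1^\theta/c_1\bigr)=K_{66}$.

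There is no genuine obstacle here — the Fact is essentially a bookkeeping consequence of (A1)–(A3) together with the regularity hypotheses (i) and (iii). The only item deserving care is keeping the unscaled quantities $\Phi_r^{(j)},\tau_r^{(j)}$ separate from the rescaled $X,\tau,Y$; this is enforced by the scaling $\lambda_j^\theta g_x(t)=g_{\lambda_j^\theta x}(\lambda_j t)$ obtained from \eqref{0123} with $r=1/\lambda_j$, which is precisely what makes $Y=g_X(\tau)$ in \eqref{t287}.
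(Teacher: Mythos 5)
Your proof is correct and follows essentially the same route as the paper: both lower bounds come from $g_x(t)\ge c_1t^\theta$ (equation \eqref{z48}) combined with the lower bound on the inter-event times in condition (i), and the upper bound on $Y$ comes from the self-affine identity \eqref{t123}, monotonicity of $g_1$, and conditions (i) and (iii), exactly as in the paper's argument (your explicit remark about folding $K_1K_2/c_1^{1/\theta}$ into $K_2K_1^\theta/c_1$ is the same harmless constant adjustment the paper makes implicitly).
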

 \begin{proof}
   To verify \eqref{z59} first we introduce
  $x_{-1}:=\lim\limits_{t\downarrow {T_{r-1}^{(j)}}}Z_j(t) $. Then we apply \eqref{z48} and \eqref{t282} in this order to obtain the sequence of inequalities
$$
  X \cdot \lambda_{j}^{-\theta}=g_{x_{-1}}\left(\tau_{r-1}^{(j)}\right)
  \geq   c_1\left(\tau_{r-1}^{(j)}\right)^\theta
   \geq \frac{c_1}{K_1^\theta}\lambda_{j}^{-\theta}.
$$
Applying the same for $\tau_{r}^{(j)}$ and $Y$ instead of $\tau_{r-1}^{(j)}$ and $X$ we complete the proof of \eqref{z59}.

  Now we prove that \eqref{z60} holds.
Using \eqref{z59} and the fact that $g_1$ is monotone increasing we get
as well as the self-similarity property \eqref{t123} we obtain that
\[
  \lambda_{j}^{-\theta}Y=g_{X\lambda_{j}^{-\theta}}\left(\tau\lambda_{j}^{-\theta}\right)  = X\lambda_{j}^{-\theta}
  g_1\left(\frac{\tau\lambda_{j}^{-1}}{X^{1/\theta}\lambda_{j}^{-1}}\right)
  \]
 Using  \eqref{t282}, \eqref{t283} to estimate the right hand side yields the upper bound
\[ \lambda_{j}^{-\theta}Y \le \lambda_{j}^{-\theta}K_4 g_1\left(\frac{K_2 \cdot K_1^\theta}{c_1}\right), \]
which finishes the proof.
 \end{proof}



\subsubsection{Heuristics of the proof of \eqref{z66} and \eqref{z66-osc}}\label{z69}
In this section we describe the intuitive idea behind the proof of the upper bound of the multifractal spectrum of $Z$ on the region $R_2$.

First, we determine the smallest magnitude of the intensity $\lambda_j$, for which at least one of the $Z_j$s having intensity of this magnitude,  still has a loss in every net interval $I_{\ell }^{k}$. Recall that $\beta'=\beta-(1-1/\theta)$, where $\beta$ is as in \eqref{z4}. We claim that this happens when
we focus on those $j$  for which
\begin{equation}\label{z74}
  \lambda_{j}^{\theta}\sim h^{-1/\beta'}.
\end{equation}
Namely,
if we choose $q$ such that $h^{-1/\beta'}\in(L^{q},L^{q+1})$ but $h^{-1/\beta'}\sim L^q$ then by the definition of $\beta$ we have
$$
\#\left\{j:\lambda_{j}^{\theta}\in (L^{q},L^{q+1})\right\}\sim h^{-((\beta-1)/\beta')}.
$$
This indeed follows from the definition \eqref{z4} or \eqref{o80} of $\beta$, see also \eqref{o81} and \eqref{o83}.
For each of these $j$ the process $Z_j$ has approximately $\lambda_j\sim h^{-1/(\theta \beta')}$ losses on $(0,1)$. So the total number of losses of each of these $j$ is the product
$$
h^{-((\beta-1)/\beta')} \cdot h^{-1/(\theta \beta')}=h^{-1}.
$$
This shows that
the total number of the losses of the union of  $Z_j$ with intensity satisfying \eqref{z74} is approximately $h^{-1}$.
We actually prove that for each net interval $I_{\ell }^{k}$ we can find a $Z_j$ satisfying \eqref{z74} which has  a sufficiently regular loss in $I_{\ell }^{k}$. Then the proof continues as follows: we decompose the oscillation $\mathrm O_{\ell }^{k}Z$ into the oscillation of this $Z_j$ plus the oscillation of all the other $Z_i$, $i\ne j$. Respectively, on the lower part of the region $R_2$ we decompose the increment $\Delta_{\ell }^{k}Z$ into the increment of this $Z_j$ plus the increment of all the other $Z_i$, $i\ne j$. From here, since we have a good control on the oscillation/increment of this particular $Z_j$, we can show that \eqref{t274} holds whatever the oscillations/increments of the remaining $Z_i$s are.
We use the following notion for the rest of this section.

Fix a pair $(\beta,\alpha)\in R_2$. Recall that an $\varepsilon_0$ comes from the regularity of the sequence $(\lambda_j)_{j\ge 1}$, see \eqref{o83}.
  For the rest of the proof we choose an $\varepsilon, \varepsilon_1, \varepsilon_2>0$ in such a way that they satisfy the following inequalities:
   \begin{align}
   3\varepsilon&<\alpha-1/\beta',\nonumber \\
 \label{z78} \frac{\varepsilon_0}{\beta'-\varepsilon_0}&<\varepsilon_1<\varepsilon_2<3 \theta^2\varepsilon_0<\varepsilon.
 \end{align}
Note that this choice is possible since we assume \eqref{z53} on $R_2$ and $\beta'\theta^2>1$ holds when $\beta>1$ and when $\beta=1$ then $\beta'=1/\theta$ and $\theta\ge 1$.
\subsubsection{The existence of an appropriate $Z_j$ for an arbitrary net intervals}\label{z70}

In this section we prove that

\begin{lemma}\label{t281}
 Assume the sequence $(\lambda_j)_{j\ge 1}$ is regular. Then, if $\ell $ is large enough then for every $k<2^\ell $ there exists a $j$ such that $T_{r_j}^{(j)}(k,\ell )$ is a $2^{-2/c}$-good loss.
\end{lemma}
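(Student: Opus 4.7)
The plan is a probabilistic covering argument that uses the Poisson structure of the loss times, combined with a Borel--Cantelli step. The key idea is to enlarge the range of admissible intensities slightly beyond $\lambda_j^\theta \sim h^{-1/\beta'}$, so that the total number of good losses vastly exceeds $2^\ell$, enabling a union bound over all dyadic intervals. Throughout, $h = 2^{-\ell}$, and $v := 2^{-2/c}$ denotes the small positive number appearing in the statement.

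First, I would define
\[
\mathcal{J}_\ell := \bigl\{j : \lambda_j^\theta \in [h^{-1/\beta'+\varepsilon_1},\ L^{A+1}\, h^{-1/\beta'-\varepsilon_2}]\bigr\},
\]
with $\varepsilon_1, \varepsilon_2$ as in \eqref{z78}. Using the regularity of $(\lambda_j)_{j\ge 1}$ (Definition \ref{t220}) together with the Blumenthal--Getoor estimates \eqref{o81}--\eqref{o83}, the aim is to show that the total intensity $\Lambda_\ell := \sum_{j\in\mathcal{J}_\ell} \lambda_j$ obeys $\Lambda_\ell \ge C\, h^{-1-\delta}$ for some $\delta > 0$ depending on $\varepsilon_2$. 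The dominating contribution comes from the uppermost intensity block in the range, whose cardinality is bounded from below by \eqref{o83}.

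Next, with $v = 2^{-2/c}$ fixed in Fact \ref{z72}, I would invoke Fact \ref{t222} to guarantee that almost surely, for $\ell$ large, the event $E_1^{(j)}$ holds simultaneously for all $j \in \mathcal{J}_\ell$, so that each $Z_j$ has between $\lambda_j/2$ and $2\lambda_j$ losses on $[0,1]$. Because the Poisson processes for different $j$ are independent, the number of losses contributed by $\mathcal{J}_\ell$ to a fixed dyadic interval $I_\ell^k$ is Poisson with mean $\Lambda_\ell h \ge C\, h^{-\delta}$; thinning by the essentially independent goodness event of Fact \ref{z72} yields
\[
\mathbb{P}\bigl(I_\ell^k \text{ has no good loss of any } Z_j \text{ with } j \in \mathcal{J}_\ell\bigr) \le \exp\bigl(-C(1-v)\, h^{-\delta}\bigr).
\]
A union bound over $k < 2^\ell$ gives the super-exponentially small probability $2^\ell \exp(-C(1-v)\, 2^{\ell\delta})$, which is summable in $\ell$, and Borel--Cantelli then finishes the proof.

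The main obstacle is the execution of the first step: the intensity range must be chosen so that $\Lambda_\ell$ exceeds $h^{-1}$ by a genuine polynomial factor, while keeping $\lambda_j^\theta$ close enough to $h^{-1/\beta'}$ for the subsequent upper-bound argument, which needs the typical good-loss magnitude $\lambda_j^{-\theta} \sim h^{1/\beta'}$. This is precisely where the regularity assumption and the lower bound \eqref{o83} become indispensable; the heuristic in Section \ref{z69} is tight and gives only $\sim h^{-1}$ losses in total, whereas the union bound strictly requires a polynomial surplus. A secondary subtlety is that the goodness events of three successive losses of the same $Z_j$ are not strictly independent, since they share inter-event times, but this local correlation can be absorbed by restricting attention to every third loss, which does not affect the polynomial-order counts above.
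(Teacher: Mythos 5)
Your first step (enlarging the intensity window so that the total intensity $\Lambda_\ell$ of the selected processes exceeds $h^{-1}$ by a polynomial factor, via regularity and \eqref{o83}) is sound and is essentially the same computation the paper performs inside Fact \ref{z81}. The genuine gap is in the second step, the ``thinning by the essentially independent goodness event''. Goodness in Definition \ref{z71} is \emph{not} an independent mark attached to each point of the superposed Poisson process: condition (i) involves the two neighbouring inter-event times $\tau_{r-1}^{(j)},\tau_{r+1}^{(j)}$, and, more seriously, condition (iii) requires $X=\lambda_j^{\theta}\Phi_{r-1}^{(j)}<K_4$, where $\Phi^{(j)}$ is the Markov chain of pre-loss values. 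For different losses of the \emph{same} process these events are strongly dependent through the chain (and through the conditioning on a loss falling in the prescribed interval $I_\ell^k$), and your proposed fix of keeping only every third loss removes the shared inter-event times but does nothing about the dependence carried by $\Phi^{(j)}$. Consequently the claimed bound $\exp(-C(1-v)h^{-\delta})$ for the probability that an interval contains no good loss does not follow from Fact \ref{z72} as stated; Fact \ref{z72} only controls the marginal probability of a single loss being good. (The appeal to Fact \ref{t222}/$E_1^{(j)}$ is also not doing any work here: the per-interval covering probability has to come from the Poisson superposition directly, as you in fact compute.)

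The paper's proof repairs exactly this point by extracting independence \emph{across processes} rather than across losses. It splits the intensity window into $c\ell$ disjoint blocks $\Gamma_\ell^{(i)}$, each containing at least $L^{q(\beta-1-\varepsilon_0)}$ indices by \eqref{o83} and regularity; the total-intensity computation (your $\Lambda_\ell h\ge 2^{\delta\ell}$, with $\delta>0$ precisely when $\varepsilon_1>\varepsilon_0/(\beta'-\varepsilon_0)$ as in \eqref{z78}) is used \emph{only} for the purely Poisson covering statement that, almost surely for $\ell$ large, every $I_\ell^k$ contains a loss of some process from every single block (Fact \ref{z81}). This yields, in every dyadic interval, at least $c\ell$ losses belonging to $c\ell$ \emph{distinct, independent} processes; the goodness of one loss per process then fails independently with probability at most $v=2^{-2/c}$, so all fail with probability at most $v^{c\ell}=2^{-2\ell}$, which beats the union bound over the $2^\ell$ intervals and allows Borel--Cantelli. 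Note that only $\sim\ell$ independent good-loss trials per interval are needed, not $2^{\delta\ell}$; if you want to salvage your single-window version, you must first lower-bound the number of \emph{distinct} processes with a loss in $I_\ell^k$ (not the total number of losses) and then argue goodness independently across those processes, which brings you back to the paper's structure.
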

Fix $c>0$ as in the statement of the lemma. Let us pick a constant $C>0$ large enough and define the interval with $h=2^{-\ell}$
\begin{equation}\label{z85}
[M_1, M_2]:=\left[h^{-\left(1+\varepsilon_1\right)/\beta'}, C \ell h^{-\left(1+\varepsilon_1\right)/\beta'}\right]
\end{equation}
in such a way that \eqref{o83} is satisfied $c\ell$ many times. That is, $C\ell$ is so large that in the interval $[2^{\ell(1+\varepsilon_1)/\beta'}, C \ell 2^{\ell(1+\varepsilon_1)/\beta'}]$ there is at least $c\ell$ many regular intervals $[L^q, L^{q+1}]$ with at least $L^{q(\beta-1-\varepsilon_0)}$ many $\lambda_j^\theta$-s falling in each of them. Note that when $\ell$ is so large that $C\ell<2^{\ell (\varepsilon_2-\varepsilon_1)/\beta'}$ holds for some $\varepsilon_2\ge \varepsilon_1$, then we have the bound
\begin{equation}\label{newepsilon2}
M_2\le h^{-(1+\varepsilon_2)/\beta'}
\end{equation}
that we shall use later on.
We use now that $L^q\ge 2^{\ell(1+\varepsilon_1)/\beta'}$ to obtain that
\begin{equation}\label{noL}
 \#\left\{j:\lambda_j^\theta \in [M_1, M_2] \right\} \ge c \ell 2^{\ell (1+\varepsilon_1)(\beta-1-\varepsilon_0)/\beta'}
 \end{equation}
Let $\Gamma_\ell$ be the elements of the set in formula \eqref{noL}, and let us partition $\Gamma_\ell$ into disjoint sets immediately as $\Gamma_\ell= \bigcup_{i=1}^{c\ell}\Gamma_\ell^{(i)}$ according to which regular interval $\lambda_j^\theta$ is falling in ($j$s outside the regular intervals can  be assigned arbitrarily to one of these sets).  For an $j\in \Gamma_\ell^{(i)}$, $i\le c\ell$ and  $k\le 2^\ell $ we define the events
$$
E_{k}^{j}:=\left\{Z_j \mbox{ has a loss on }I _{\ell }^{k}\right\},
\qquad
E_\ell^{(i)} :=
\bigcap\limits_{k=1 }^{2^\ell }
\bigcup\limits_{j\in \Gamma_\ell^{(i)} }
E_{k}^{j}.
$$
\begin{fact}\label{z81}
  There exists $\ell _8$ such that for all $\ell >\ell _8$, the event $\bigcap_{i\le c\ell}E_\ell^{(i)} $ holds almost surely. That is, for every $k\le 2^\ell$, there is at least $c\ell$ many processes $Z_j$ with intensity in $[M_1, M_2]$ that have a loss on $I_k^\ell$.
\end{fact}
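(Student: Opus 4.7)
The strategy is to show, for each fixed $i$, that the probability that some interval $I_\ell^k$ is missed by all the processes $\{Z_j:j\in \Gamma_\ell^{(i)}\}$ decays doubly exponentially in $\ell$, so that Borel--Cantelli applies after two union bounds (over $k\le 2^\ell$ and over $i\le c\ell$).

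The main input is a lower bound on the aggregate Poisson intensity
\[
\Lambda^{(i)}_\ell:=\sum_{j\in \Gamma_\ell^{(i)}}\lambda_j.
\]
By construction each $\Gamma_\ell^{(i)}$ arises from a regular block $[L^q,L^{q+1}]$ in the sense of \eqref{o83}, so it contains at least $L^{q(\beta-1-\varepsilon_0)}$ indices $j$, each satisfying $\lambda_j\ge L^{q/\theta}\ge M_1^{1/\theta}=2^{\ell(1+\varepsilon_1)/(\theta\beta')}$. Multiplying the cardinality bound by the lower bound on $\lambda_j$ yields
\[
\Lambda^{(i)}_\ell\ge 2^{\ell(1+\varepsilon_1)(\beta-1-\varepsilon_0+1/\theta)/\beta'}
= 2^{\ell(1+\varepsilon_1)(\beta'-\varepsilon_0)/\beta'}.
\]
Because $\varepsilon_1>\varepsilon_0/(\beta'-\varepsilon_0)$ by \eqref{z78}, the exponent exceeds $1$: more precisely there exists $\delta=\delta(\varepsilon_0,\varepsilon_1,\beta')>0$ with $\Lambda^{(i)}_\ell\ge 2^{\ell(1+\delta)}$.

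Now for each $k<2^\ell$ the processes $\{Z_j\}_{j\in\Gamma_\ell^{(i)}}$ are independent Poisson processes, so the probability that none of them has a loss in $I_\ell^k$ equals
\[
\prod_{j\in \Gamma_\ell^{(i)}}\exp\bigl(-\lambda_j\,h\bigr)=\exp\bigl(-h\,\Lambda^{(i)}_\ell\bigr)\le \exp(-2^{\ell\delta}),
\]
with $h=2^{-\ell}$. A union bound over $k<2^\ell$ and then over $i\le c\ell$ gives
\[
\mathbb P\Bigl(\bigl(\bigcap_{i\le c\ell}E_\ell^{(i)}\bigr)^c\Bigr)\le c\ell\cdot 2^\ell\cdot\exp(-2^{\ell\delta}),
\]
which is summable in $\ell$. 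Borel--Cantelli then yields an $\ell_8$ such that $\bigcap_{i\le c\ell}E_\ell^{(i)}$ holds for every $\ell>\ell_8$ almost surely, which is the claim.

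The only delicate step is checking that the exponent $(1+\varepsilon_1)(\beta'-\varepsilon_0)/\beta'$ genuinely exceeds $1$; this is exactly the purpose of the first inequality in \eqref{z78} and is where regularity of $(\lambda_j)$ enters (via \eqref{o83}, to guarantee that one can indeed find $c\ell$ disjoint regular blocks inside $[M_1,M_2]$ once the constant $C$ in \eqref{z85} is fixed large enough). All remaining steps are standard Poisson and Borel--Cantelli bookkeeping.
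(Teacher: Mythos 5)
Your argument is correct and is essentially the paper's own proof: you bound the aggregate intensity $\sum_{j\in\Gamma_\ell^{(i)}}\lambda_j$ from below using the per-block cardinality from \eqref{o83} together with $\lambda_j\ge M_1^{1/\theta}$, use $\beta-1=\beta'-1/\theta$ and the first inequality in \eqref{z78} to get $h\,\Lambda_\ell^{(i)}\ge 2^{\delta\ell}$, and then conclude by the doubly exponential bound, union bounds over $k\le 2^\ell$ and $i\le c\ell$, and Borel--Cantelli. Your write-up is in fact slightly cleaner than the paper's (which has a sign typo in the exponential and a loose simplification of the exponent), but the route is identical.
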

\begin{proof}
Note that the union of  independent Poisson processes is Poisson process with intensity as the sum of the intensities. Hence
\begin{equation*}\label{z82}
\begin{aligned}
  \mathbb{P}\Big(\Big(\bigcup_{j\in \Gamma_\ell^{(i)}}E_{k}^{j}\Big)^c\Big)
  &=\exp\Big\{-\sum_{j\in \Gamma_\ell^{(i)}}\lambda_j h\Big\}\\
  &\le \exp\left\{ 2^{\ell \frac{(1+\varepsilon_1)(\beta-1-\varepsilon_0)}{\beta'}} 2^{\ell \frac{(1+\varepsilon_1)}{\theta\beta'}} 2^{-\ell}\right\}.
\end{aligned}
\end{equation*}
where we used that the cardinality of $\Gamma_\ell^{(i)}$ is at least as in \eqref{noL} and that the intensity $\lambda_j$ is at least $M_1^{1/\theta}$.
Using now that $\beta-1=\beta'-1/\theta$, the exponent of $2^\ell$ on the rhs of the previous formula simplifies and becomes
$\beta' \theta \varepsilon_1 - \theta \varepsilon_0 (1+ \varepsilon_1):=\delta$, which is positive as long as $\varepsilon_1> \varepsilon_0/(\beta'-\varepsilon_0)$, that we precisely assumed in \eqref{z78}.
To continue the proof, we apply a union bound to obtain that
$$
\mathbb{P}\left((E_\ell^{(i)})^c\right)=\mathbb{P}\Big(\Big(\bigcap\limits_{k=1}^{2^\ell }\bigcup\limits_{j\in \Gamma_\ell^{(i)} }E_{k}^{j}\Big)^c\Big)
   \leq 2 ^\ell \cdot\exp\left\{-2^{\delta \ell }\right\}.
$$
Finally, we apply a union bound again
$$
\mathbb{P}\left(\big(\bigcap_{i=1}^{c\ell}E_\ell^{(i)}\big)^c\right)  \leq  c \ell 2 ^\ell \cdot\exp\left\{-2^{\delta \ell }\right\}.
$$
The right hand side is summable.
Now we apply Borel-Cantelli Lemma to complete the proof of the Fact \ref{z81}.
\end{proof}
\begin{proof}[Proof of Lemma \ref{t281}]
Fix an arbitrary $k<2^\ell$.
By Fact \ref{z81} there is at least $c \ell$ many $j$s with respective losses $T_{r_j}^{(j)}(k,\ell )$ that fall in $I_{k }^{\ell}$.
  By Fact \ref{z72} the probability that $T_{r_j}^{(j)}(k,\ell )$ is not a $2^{-2/c}$-good loss is less than $2^{-2/c}$. Note that these processes are independent. Hence the probability that for all $\ell c$ processes, the loss in $I_k^\ell$ is not a $2^{-2/c}$-good loss is $\left(2^{-2/c}\right)^{\ell c}=2^{-2\ell }$. So, by a union bound, the probability that there exists at least one $k<2^{\ell }$ that does not have a process with a
  $2^{-2/c}$-good loss on it, is less than $2^\ell  \cdot 2^{-2\ell }=2^{-\ell }$. This is summable in $\ell$, so, from Borel-Cantelli Lemma we obtain that for a sufficiently large $\ell _0$, for all $\ell >\ell _0$ we have
\begin{equation}\label{z83}
   \forall k<2^\ell ,\
   \exists  j\in \Gamma_\ell, \
    T_{r_j}^{(j)}(k,\ell ) \mbox{ is a  }2^{-2/c} \mbox{-good loss.}
  \end{equation}
  This finishes the proof.
  \end{proof}

\subsubsection{Deriving the upper bound on $f_g^{\mathrm O}(\alpha)$ and $f_g(\alpha)$}\label{z75}

 To verify \eqref{t273}  first we fix an $\ell $ large enough and an arbitrary $k<\ell $. Using Lemma \ref{t281} we choose an $j\in \Gamma_\ell $ such that for an apropriate $j=j(k,\ell )$ and $r=r(k,\ell ,j)$,
$Z_j$ has a $2^{-2/c}$ good loss $T_{r}^{(j)}\in I_{\ell }^{k}$, where $c$ as in Lemma \ref{t281}. For
$v=2^{-2/c}$  we write
 \begin{equation}\label{z84}
   (\mathbf{K},\delta_{0}):=(\mathbf{K}^{v},\delta_{0}^{v}).
 \end{equation}
 First we decompose $\Delta_{\ell }^{k} Z$ as follows
\begin{equation*}\label{t285}
 \Delta_{\ell }^{k} Z:=\Delta_{\ell }^{k} Z_j+\Delta_{\ell }^{k} Z_{\ne j},
\end{equation*}
where $ \Delta_{\ell }^{k} Z_{\ne j}:=\sum_{q\ne j}\Delta_{\ell }^{k} Z_q$.
On the other hand, for the oscillations we can get upper and lower bounds: a jump of the process $Z_j$ already produces an oscillation of size $\mathrm O Z_j(k,\ell )$, while triangle inequality yields the upper bound:
\begin{equation}\label{t285-2}
O_{\ell }^{k} Z_j\le \mathrm O_{\ell }^{k} Z\le \mathrm O_{\ell }^{k} Z_j+\mathrm O_{\ell }^{k} Z_{\ne j},
\end{equation}
where similarly $\mathrm O_{\ell }^{k} Z_{\ne j}:=\mathrm O_{\ell }^{k} (\sum_{q\ne j} Z_q)$, i.e., this latter is the oscillation of the \emph{sum} of all the other processes. From now on we often suppress $(k,\ell )$ in the rest of the proof. Further, we write $F_{\ne j}$ for the CDF of $\Delta Z_{\ne j}$, and $F_{\ne j}^{\mathrm O}$ for the CDF of $\mathrm O Z_{\ne j}$.

We handle the increments first.
Recall that
 $\varphi_j$ denotes the density function of $\Delta Z_j$. Recall the notation $\mathfrak{J}_{h,\varepsilon}$ from \eqref{jhe}.
Clearly, by a simple convolution,
\begin{multline}\label{t286}
  \mathbb{P}\left(\Delta_{\ell }^{k} Z\in\mathfrak{J}_{h,\varepsilon}\right)
  = \mathbb{P}\left(\Delta_{\ell }^{k}Z_j
  +
  \Delta_{\ell }^{k}Z_{\ne j}\in\mathfrak{J}_{h,\varepsilon}\right)
  =\int\limits_{a\in\mathbb{R}}
  \int\limits_{y\in a+\mathfrak{J}_{h,\varepsilon}}
  z_j(y)dy dF_{\ne j}(a)
   \\
   \leq
   2\max \limits_{a}\int\limits_{a}^{a+h^{\alpha-\varepsilon}}z_j(y)dy
   =
   2\max \limits_{a}
   \mathbb{P}\left(\Delta Z_j\in a+\left(0,h^{\alpha-\varepsilon}\right)\right).
\end{multline}
Now we assume that
\begin{equation}\label{t288}
 1 \leq  \alpha \cdot \beta' \leq\beta.
\end{equation}
This defines the sub-region of $R_2$ that we denote by $R_2^{\ell }$, see Figure \ref{o60}.
Clearly, the right hand side of \eqref{t288} always holds when $\alpha<1$. Recall the notations $I_1, I_2$ from \eqref{t291}. Using \eqref{t238}, Lagrange Mean Value Theorem, \eqref{t288} and \eqref{z59}, after a somewhat longish but elementary calculation we obtain that
\begin{equation}\label{t293}
  \lambda_{j}^{-\theta}\left(I_1+I_2\right) \leq
  h^{\beta/\beta'}
  \ll
  h^{\alpha}\ll \lambda_j^{-\theta}Y.
\end{equation}
This yields an estimate on the second term in \eqref{t292}.
Hence, using \eqref{t292} combined with \eqref{t293}, we obtain that
\begin{equation}\label{inc-to-osc} \mathbb{P}\left(\Delta Z_j\in a+\left(0,h^{\alpha-\varepsilon}\right)\right)\approx
 \mathbb{P}\left((1-\eta)\lambda_j^{-\theta}Y\in a+\left(0,h^{\alpha-\varepsilon}\right)\right).
 \end{equation}
By the fact that $\lambda_j^\theta\in (M_1, M_2)$ from \eqref{z85} with  the bound on $M_2$ as in \eqref{newepsilon2} we obtain
\begin{equation}\begin{aligned}\label{t294}
  \mathbb{P}\left(\Delta Z\in\mathfrak{J}_{h,\varepsilon}\right)& \leq
  2\max\limits_a
   \mathbb{P}\left(Y\in
  a
  +
   \Big(
0,\frac{2}{1-\eta}\lambda_{j}^{\theta}h^{\alpha-\varepsilon}
   \Big) \right) \\
    &\leq
    2\max\limits_a
   \mathbb{P}\left(Y\in
  a
  +
   \left(
0, h^{\alpha-\frac{1}{\beta'}(1+3\theta^2\varepsilon_0)-\varepsilon}
   \right) \right)\\
    &=
    2\max\limits_a
   \mathbb{P}\left(g_X(\tau)\in
  a
  +
   \left(
0,h^{\alpha-\frac{1}{\beta'}(1+3\theta^2\varepsilon_0)-\varepsilon}
   \right)\right),
\end{aligned}\end{equation}
where $\tau$ is a  truncated $\mathrm{Exp(1)}$ random variable taking values from the interval $\left(\frac{1}{K_1},K_2\right)$
and the distribution of $X$ is $\pi(\cdot|\ (K_1^{-\theta},K_4))$. This is so, because we assumed that $Z_j$ has a good loss in $I_{\ell }^{k}$.
Let $\widetilde{\varphi}(x)$ and $\widetilde{f}(t)$ be the density functions of $X$ and $\tau$ respectively. Moreover, for a fixed $x$ let $y\mapsto \psi_x(y)$ be the density function of $Y=g_x(\tau)$ conditioned on $X=x$. That is by \eqref{z56}
$$
\psi_x(y):=\frac{\widetilde{f}\left(g_{x}^{-1}(y)\right)}
{g'_x\left(g_{x}^{-1}(y)\right)}=
\frac{\widetilde{f}\left(g_{x}^{-1}(y)\right)}
{x^{1-1/\theta} \cdot g'_1\left(\frac{\tau}{x^{1/\theta}}\right)}
$$
By the choice of $j$ we know that on the one hand, $g'_1\left(\frac{\tau}{x^{1/\theta}}\right)>\delta_0$ on the other hand, $x>K_1^{-\theta}$. From these we obtain that
\begin{equation}\label{z57}
  \psi_x(y)  \leq \frac{2K_1^{\theta-1}}{\delta_0}.
\end{equation}
Then the right hand side of \eqref{t294} without the maximum can be estimated as follows
\begin{equation}\begin{aligned}\label{t295}
   &\int\limits_{x=K_1^{-\theta}}^{K_4}
   \mathbb{P}\left(g_x(\tau)\in
  a
  +
   \left(
0,h^{\alpha-\frac{1}{\beta'}(1+3\theta^2\varepsilon_0)-\varepsilon}
   \right)\right)
 \widetilde{\varphi}(x)  dx\\
& \quad=
 \int\limits_{x=100^{-\theta}}^{K_4}\Bigg(
 \underbrace{\int\limits_{y=a}^{a+h^{\alpha-\frac{1}{\beta'}
 (1+3\theta^2\varepsilon_0)-\varepsilon}}
 \psi_x(y)dy}_{:=\mathcal{I}(a,x)}
\Bigg) \widetilde{\varphi}(x)  dx.
\end{aligned}\end{equation}
Our goal is to verify that there is a constant $c_{55}$ such that
\begin{equation}\label{t296}\forall a,\ \forall x:\quad
  \mathcal{I}(a,x) <c_{55} \cdot h^{\alpha-\frac{1}{\beta'}
 (1+3\theta^2\varepsilon_0)-\varepsilon}
 <c_{55} \cdot h^{\alpha-\frac{1}{\beta'}(1+\varepsilon)
 -\varepsilon}
\end{equation}
This immediately follows from \eqref{z57} and the lower bound on $\varepsilon$ in \eqref{z78}.
Note that this estimate is independent of $x,a$ and that $\widetilde \varphi_x$ in \eqref{t295} is a density function thus it integrates to $1$. Thus we obtain the upper bound
\begin{equation}\label{c55}
 \mathbb{P}\left(\Delta Z\in\mathfrak{J}_{h,\varepsilon}\right) \le  c_{55} \cdot h^{\alpha-\frac{1}{\beta'}(1+\varepsilon)
 -\varepsilon}
\end{equation}
This establishes the upper bound on $f_g(\alpha)$ in the region $R_2^{\ell }$.

We remark that we used the crucial estimate \eqref{t293} that is only valid on $R_2^{\ell }$ but not on $R_2^{u}$. This made it possible to move from the estimate on $Y$ to the estimate on $\Delta Z$ in \eqref{inc-to-osc}.

Now our goal is to modify the calculations above to hold for the oscillations, for the whole region
$R_2^{\ell }$. Again, we emphasize that on $R_2^{\ell }$ we have $\alpha > \beta/\beta'$, thus the estimate \eqref{t293} is not valid, and as a result \eqref{inc-to-osc} fails to hold. Thus we cannot control the increments, resulting in the lack of a bound on $f_g(\alpha)$ on this region.

On the other hand, for the oscillations we have the inequalities in \eqref{t285-2}.
Even though the convolution argument is no longer valid, by a similar argument as in \eqref{t286}, one can  still show using  \eqref{t285-2} that
\begin{equation*}
  \mathbb{P}\left(\mathrm O Z\in\mathfrak{J}_{h,\varepsilon}\right)
   \leq
   2\max \limits_{a}
   \mathbb{P}\left(\mathrm O Z_j\in a+\left(0,h^{\alpha-\varepsilon}\right)\right).
\end{equation*}
From here on, the calculation is similar as for the increments: one notes that $\mathrm O Z_j=(1-\eta)Y \lambda_j^{-\theta}$ (see \eqref{t292-osc}), and the calculation in \eqref{t294}, \eqref{t295} and \eqref{t296} also remain valid for $\mathbb P(\mathrm O Z \in \mathfrak{J}_{h,\varepsilon})$.
Thus we obtain as in \eqref{c55} that
\[  \mathbb{P}\left(\mathrm O Z\in\mathfrak{J}_{h,\varepsilon}\right) \le  c_{55} \cdot h^{\alpha-\frac{1}{\beta'}(1+\varepsilon)
 -\varepsilon}
 \]
This establishes the upper bound on $f_g^{\mathrm{O}}(\alpha)$ in the region $R_2$ (both on $R_2^{u}$ as well as on $R_2^{\ell }$.



\section{Implications of the Results}
\label{sec:implications}

In this paper we provided the multifractal spectra for one of the most widespread TCP versions of the Internet, i.e., for the TCP CUBIC, which is the default TCP version in the Linux world. We have also compared our results with the results obtained for TCP Reno in~\cite{Rams2012}. Based on our results the following conclusions can be made from the point of view of Internet traffic theory.



The multifractal spectrum $f(\alpha)$ can provide a rich characterization of traffic burstiness. Intuitively, $f(\alpha)$ captures how frequently a value $\alpha$ is found. Heuristically speaking, $\alpha$ describes the magnitude of the burst as a power of the time it lasts, on a small time scale. Hence, values for $\alpha<1$ indicate bursty behavior. As a consequence, the values and the shape of $f(\alpha)$ in the range of $\alpha<1$ have the primary importance for the evaluation of traffic burstiness.

The first conclusion follows from Theorem \ref{z30} is that \textit{TCP CUBIC traffic is a bursty traffic} since $f(\alpha)>0$ for all values where $\alpha<1$. This finding is in line with the analysis of TCP CUBIC traces measured in Internet. The importance of our result is that \textit{we have provided the theoretical proof why TCP CUBIC traffic is bursty}.

Our second conclusion can be made if we compare the multifractal spectrum of TCP CUBIC with the result obtained for TCP Reno in~\cite{Rams2012}, see Theorem III.4 in~\cite{Rams2012} as we have carried out in \secref{sec:comparison}. From Corollary~\ref{z26} we see that $f_{Reno}(\alpha)> f_{CUBIC}(\alpha)$ for $\alpha<\frac{9}{10}$ and $f_{Reno}(\alpha)=3f_{CUBIC}(\alpha)$ for $\alpha<\frac{1}{2}$. Please note that this effect is dramatic from the point of view of burstiness since this difference means that the number of dyadic intervals of size $\Delta X$ behaves as $(\Delta X)^{-f(\alpha)}$. As a practical conclusion we can say that the importance of this observation is that \textit{we have theoretically proved that TCP CUBIC traffic is less bursty than TCP Reno}. It is also a good indication why besides many other reasons (faireness, scalability, etc.) TCP CUBIC has been a good choice for being the default version in the Linux world.

As a performance implication of our results regarding queueing performance of TCP CUBIC traffic we refer to our earlier results where we gave the queue tail asymptotic of a single queueing model with general multifractal input~\cite{Dand2003}.

\bibliographystyle{plain}
\bibliography{biblo_q6}

\begin{thebibliography}{10}

\bibitem{TCP_Volume}
http://stats.simpleweb.org/.

\bibitem{abry2002multiscale}
Patrice Abry, Richard Baraniuk, Patrick Flandrin, Rudolf Riedi, and Darryl
  Veitch.
\newblock Multiscale nature of network traffic.
\newblock {\em Signal Processing Magazine, IEEE}, 19(3):28--46, 2002.

\bibitem{Barral2004}
J.~Barral and L.~J. Véhel.
\newblock Multifractal analysis of a class of additive processes with
  correlated non-stationary increments.
\newblock {\em Electronic Journal of Probability}, 9:508--543, 2004.

\bibitem{cai2007stochastic}
Han Cai, DY~Eun, Sangtae Ha, Injong Rhee, and Lisong Xu.
\newblock Stochastic ordering for internet congestion control and its
  applications.
\newblock In {\em INFOCOM 2007. 26th IEEE International Conference on Computer
  Communications. IEEE}, pages 910--918. IEEE, 2007.

\bibitem{Dand2003}
T.D. Dang, S.~Moln\'ar, and I.~Maricza.
\newblock Queuing performance estimation for general multifractal traffic.
\newblock {\em Traffic, Int. J. Comm. Systems,}, 16:117--136, 2003.

\bibitem{erramilli2000performance}
Ashok Erramilli, Onuttom Narayan, Arnold Neidhard, and Iraj San.
\newblock Performance impacts of multi-scaling in wide area {TCP/IP} traffic.
\newblock In {\em INFOCOM 2000. Nineteenth Annual Joint Conference of the IEEE
  Computer and Communications Societies. Proceedings. IEEE}, volume~1, pages
  352--359. IEEE, 2000.

\bibitem{feldmann1998data}
Anja Feldmann, Anna~C Gilbert, and Walter Willinger.
\newblock Data networks as cascades: Investigating the multifractal nature of
  {Internet WAN} traffic.
\newblock In {\em ACM SIGCOMM Computer Communication Review}, volume~28, pages
  42--55. ACM, 1998.

\bibitem{RFC3649}
S.~Floyd.
\newblock Highspeed {TCP} for large congestion window.
\newblock IETF RFC 3649, December 2003.

\bibitem{frost1994traffic}
Victor~S Frost and Benjamin Melamed.
\newblock Traffic modeling for telecommunications networks.
\newblock {\em IEEE Communications Magazine}, 32(3):70--81, 1994.

\bibitem{Remcobook}
v.d.~R. Hofstad.
\newblock {\em Random Graphs And Complex Networks}.
\newblock In preparation, 2016.

\bibitem{jagerman1997stochastic}
David~L Jagerman, Benjamin Melamed, and Walter Willinger.
\newblock Stochastic modeling of traffic processes.
\newblock {\em Frontiers in queueing}, pages 271--370, 1997.

\bibitem{Kelly03}
T.~Kelly.
\newblock {Scalable TCP}: Improving performance in highspeed wide area
  networks.
\newblock {\em ACM SIGCOMM Computer Communication Review}, 33(2):83--91, April
  2003.

\bibitem{leland1993self}
Will~E Leland, Murad~S Taqqu, Walter Willinger, and Daniel~V Wilson.
\newblock On the self-similar nature of ethernet traffic.
\newblock In {\em ACM SIGCOMM Computer Communication Review}, volume~23, pages
  183--193. ACM, 1993.

\bibitem{meyn2009markov}
Sean~P Meyn and Richard~L Tweedie.
\newblock {\em Markov chains and stochastic stability}.
\newblock Cambridge University Press, 2009.

\bibitem{molnar2001general}
S{\'a}ndor Moln{\'a}r and Gy{\"o}rgy Terdik.
\newblock A general fractal model of {Internet} traffic.
\newblock In {\em Local Computer Networks, 2001. Proceedings. LCN 2001. 26th
  Annual IEEE Conference on}, pages 492--499. IEEE, 2001.

\bibitem{Rams2012}
Michal Rams and L\'evy Vehel.
\newblock Large deviation multifractal analysis of a class of additive
  processes with correlated non-stationary increments.
\newblock {\em IEEE/ACM Transactions on Networking}, 21:1309 -- 1321, 2012.

\bibitem{Reidi1997b}
R.~Reidi and J.L. V\'ehel.
\newblock Fractional brownian motion and data traffic modeling: The other end
  of the spectrum,.
\newblock In J.~L. V\'ehel, E.~Lutton, and C.~Tricott, editors, {\em Fractals
  in Engineering,}. Springer Verlag, 1997.

\bibitem{Reidi1997}
R.H. Reidi and J.L. V\'ehel.
\newblock {TCP} traffic is multifractal: A numerical study.
\newblock Technical report, Inria RR-3129, 1997.

\bibitem{CUBIC}
I.~Rhee and L.~Xu.
\newblock {CUBIC}: A new {TCP}-friendly high-speed {TCP} variant.
\newblock In {\em Proceedings of Third International Workshop on Protocols for
  Fast Long-Distance Networks (PFLDnet 2005)}, Lyon, France, February 3-4,
  2005.

\bibitem{riedi2002}
R.~H. Riedi.
\newblock {\em Long range dependence: theory and applications}, chapter
  Multifractal Processes, pages 625--715.
\newblock Birkhauser, 2002.

\bibitem{riedi2000toward}
Rudolf~H Riedi and Walter Willinger.
\newblock Toward an improved understanding of network traffic dynamics.
\newblock {\em Self-similar Network Traffic and Performance Evaluation, eds
  Park and Willinger, Wiley}, pages 507--530, 2000.

\bibitem{samoradnitsky1994stable}
Gennady Samoradnitsky and Murad~S Taqqu.
\newblock {\em Stable non-Gaussian random processes: stochastic models with
  infinite variance}, volume~1.
\newblock CRC Press, 1994.

\bibitem{FAST-2}
D.~X. Wei, C.~Jin, S.~H. Low, and S.~Hegde.
\newblock {FAST} {TCP}: motivation, architecture, algorithms, performance.
\newblock {\em IEEE/ACM Transactions on Networking (ToN)}, 14(6):1246--1259,
  December 2006.

\bibitem{BIC}
L.~Xu, K.~Harfoush, and I.~Rhee.
\newblock Binary increase congestion control ({BIC}) for fast long-distance
  networks.
\newblock In {\em Proceedings of IEEE Infocom 2004}, volume~4, pages 2514--
  2524, Hong Kong, China, March 7-11, 2004.

\end{thebibliography}






\end{document}